%
%
%
%
%
\RequirePackage{fix-cm}
\documentclass[smallextended]{svjour3}       
\smartqed  
\textwidth 16cm \textheight 24cm

\usepackage{amsmath}
\usepackage{amsbsy}
\usepackage{amssymb}
\usepackage{textcomp}
\usepackage{graphicx}
\usepackage{amsfonts}
\usepackage{longtable}
\usepackage{multirow}
\usepackage{lscape}
\usepackage{url}
\usepackage{citeref}
\usepackage{subfig}
\usepackage{longtable}
\usepackage{lscape}
\usepackage{hhline}
\usepackage[utf8]{inputenc}
\usepackage{colortbl,array} 
\usepackage{multirow,bigdelim}
\usepackage{booktabs}
\usepackage[ruled]{algorithm2e}
\usepackage{epstopdf}
%
%
\def\eeq{\end{equation}} 
\def\lbeq#1{\begin{equation} \label{#1}} 
\def\Rz{\mathbb{R}}

\def\<{\langle} 				
\def\>{\rangle} 				
\def\eps{\varepsilon} 
\newtheorem{thm}{Theorem.\nopagebreak}[section]
\newtheorem{alg}[thm]{Algorithm.}
\def\bealg#1{\begin{alg}{\bf #1}\index{Algorithm!#1}\nopagebreak}
\def\ealg{\end{alg}}
\def\D{\displaystyle}				
\def\ol{\overline}

\def\fct#1{\mathop{\rm #1}}	

\def\argmin{\fct{argmin}}

\def\half{\frac{1}{2}}
\newtheorem{prop}[thm]{Proposition.\nopagebreak}

\def\bary{\begin{array}}
\def\eary{\end{array}}

\def\st{\fct{s.t.~}}

%
%
\begin{document}

\title{Optimal subgradient algorithms with application to large-scale linear inverse problems
}
\author{Masoud Ahookhosh}


\institute{Faculty of Mathematics, University of Vienna,
Oskar-Morgenstern-Platz 1, 1090 Vienna, Austria\\
\email{masoud.ahookhosh@univie.ac.at}
}

\date{Received: date / Accepted: date}

\maketitle

\begin{abstract}
This study addresses some algorithms for solving structured unconstrained convex optimization problems using first-order information where the underlying function includes high-dimensional data. The primary aim is to develop an implementable algorithmic framework for solving problems with multi-term composite objective functions involving linear mappings using the optimal subgradient algorithm, OSGA, proposed by {\sc Neumaier} in \cite{NeuO}. To this end, we propose some prox-functions for which the corresponding subproblem of OSGA is solved in a closed form. Considering various inverse problems arising in signal and image processing, machine learning, statistics, we report extensive numerical and comparisons with several state-of-the-art solvers proposing favourably performance of our algorithm. We also compare with the most widely used optimal first-order methods for some smooth and nonsmooth convex problems. Surprisingly, when some Nesterov-type optimal methods originally proposed for smooth problems are adapted for solving nonsmooth problems by simply passing a subgradient instead of the gradient, the results of these subgradient-based algorithms are competitive and totally interesting for solving nonsmooth problems. Finally, the OSGA software package is available.

\keywords{Structured convex optimization \and Linear inverse problems \and
Nonsmooth optimization \and Sparse optimization \and First-order black-box oracle \and Optimal complexity \and Subgradient methods \and High-dimensional data \and Compressed sensing \and Image restoration}
 \subclass{90C25 \and 90C60 \and 49M37 \and 65K05}
\end{abstract}

\section{Introduction}

In many applications, e.g., those arising in signal and image processing, machine learning, compressed sensing, geophysics and statistics, key features cannot be studied by straightforward investigations, but must be indirectly inferred from some observable quantities. Thanks to this characteristic, they are usually referred as 
{\bf \emph{inverse problems}}. If a linear relevance between the features of interest and observed data can be prescribed, this leads to linear inverse problems. If $y \in \Rz^m$ be an indirect observation of an original object $x \in \Rz^n$ and $\mathcal{A}:\Rz^n \rightarrow \Rz^m$ is a  linear operator, then the linear inverse problem is defined using
\lbeq{e.inv2}
y = \mathcal{A}x + \nu,
\eeq 
where $\nu \in \Rz^m$ represents an additive noise about which little is known apart from which qualitative knowledge. \\

{\bf Motivation \& history.} In practice, the systems (\ref{e.inv2}) is typically {\bf \emph{underdetermined}}, {\bf \emph{rank-deficient}} or {\bf \emph{ill-conditioned}}. The primary difficulty with linear inverse problems is that the inverse object is extremely sensitive to $y$
due to small or zero singular values of $\mathcal{A}$ meaning ill-conditioned systems behave like singular cases. Indeed, in the case that $\mathcal{A}^{-1}$ for square problems or {\bf \emph{pseudo-inverse}} $\mathcal{A}^\dagger = (\mathcal{A}^*\mathcal{A})^{-1} \mathcal{A}^*$ for full rank over-determined systems is exist, then analyzing the singular value decomposition shows that $\tilde{x} = \mathcal{A}^{-1}y$ or $\tilde{x} = \mathcal{A}^\dagger y$ be inaccurate and meaningless approximation for $x$, see \cite{NeuI}. Furthermore, when the vector $\delta$ is not known, one cannot solve (\ref{e.inv2}) directly. 

From underdetermined or rank-deficient feature of inverse problems, we know that if a solution exists, then there exist infinitely many solutions. Hence some additional information is required to determine a satisfactory solution of (\ref{e.inv2}). It is usually interested to determine a solution by minimizing the residual $\|\mathcal{A}x - y\|_2$, leading to the well-known least-square problem
\begin{equation}\label{e.lea}
\min_{x \in \Rz^n} ~~ \frac{1}{2} \|\mathcal{A}x - y\|_2^2,\\
\end{equation}
where $\|.\|_2$ denotes $l_2$-norm. In view of ill-condition feature of the problem (\ref{e.inv2}), the solution of (\ref{e.lea}) is usually improper. Hence Tikhonov in \cite{Tik} proposed the following penalized minimization problem
\begin{equation}\label{e.tikh}
\min_{x \in \Rz^n} ~~  \frac{1}{2} \|\mathcal{A}x - y\|_2^2 + \frac{\lambda}{2} \|x\|_2^2,\\
\end{equation}
where $\lambda$ is the regularization parameter that controls the trade-off between the least square data fitting term and the regularization term. The reformulated problem is also smooth and convex, and selecting a suitable regularization parameter leads to a well-posed problem. In many applications, we seek the sparsest solution of (\ref{e.inv2}) among all the solutions provided that $y$ is acquired from a highly sparse observation. To do so, the following problem is proposed
\[
    \bary{ll}
    \mathrm{min} & \|x\|_0\\
    \st  & \|\mathcal{A}x - y\| \leq \epsilon
    \eary
\] 
where $\|x\|_0$ is the number of all nonzero elements of $x$ and $\epsilon$ is a nonnegative constant. It is well-known that the objective function is nonconvex, however its convex relaxation is much more preferable. As investigated in \cite{BruDE}, this is achievable by considering the problem
\[
    \bary{ll}
    \mathrm{min} & \|x\|_1\\
    \st  & \|\mathcal{A}x - y\| \leq \epsilon.
    \eary
\] 
or its unconstrained version
\begin{equation}\label{e.BPD}
\min_{x \in \Rz^n} ~~ \frac{1}{2} \|\mathcal{A}x - y\|_2^2 + \lambda \|x\|_1,\\
\end{equation}
which is referred as {\bf \emph{basis pursuit denoising}} or {\bf \emph{lasso}}. Due to non-differentiability of the $l_1$-norm, this problem is nonsmooth and its solving is more difficult than solving (\ref{e.tikh}). 

In general, linear inverse problems can be modelled as a special case of the following {\bf \emph{affine composite}} minimization problem
\begin{equation}\label{e.genf}
\min_{x \in \Rz^n} ~~ f(\mathcal{A}x) + \varphi(\mathcal{W}x).
\end{equation}
Usually, $f: \Rz^n \rightarrow \Rz$ is a smooth convex {\bf \emph{loss function}} defined by a black-box oracle and $\varphi: \Rz^n \rightarrow \Rz$ is a {\bf \emph{regularizer}} or {\bf \emph{regularization function}}, which is usually nonsmooth and possibly nonconvex. This means that the general underlying regularizer can be smooth or nonsmooth and convex or nonconvex. In this paper, we consider {\bf \emph{structured convex optimization}} problems of the form (\ref{e.genf}) when regularizers can be smooth or nonsmooth convex terms, so nonconvex regularizers are not the matter of this study.

Over the past few decades, because of emerging many applications of inverse problems in applied sciences and engineering, solving structured composite optimization problems of the form (\ref{e.genf}) attracts many interesting researches. To exploit different features of objective functions, a variety of techniques for solving this class of problems are introduced, where proposed optimization schemes employ particular structures of objective functions. It is also known that solving nonsmooth problems are usually more difficult than solving smooth problems, even for unconstrained cases. In the meantime, black-box convex optimization is theoretically and computationally regarded as a mature area in optimization and frequently employed to solve problems with large number of variables appearing in various applications. Since problems of the form (\ref{e.genf}) mostly include high-dimensional data, optimization schemes should avoid of exploiting costly operations and also high amount of memory. Therefore, the {\bf \emph{first-order black-box oracle}} is intensively used to construct appropriate schemes for handling large-scale problems, where just function and subgradient evaluations are needed. Some popular first-order references are gradient-based algorithms \cite{CheZ,Nes.E,Nes.Co,Nes.Un}, subgradient-based schemes \cite{BecT1,BenN,NemY,Nes.Bo,Nes.PD}, bundle-type procedures \cite{LanB}, smoothing techniques \cite{BecT2,Nes.Sm} and proximal gradient methods \cite{BecT2,BecCG,ParB}. 

In the context of convex optimization, the {\bf \emph{analytical complexity}} refer to the number of calls of an oracle, for example iterations or function and subgradient evaluations, to reach an $\eps$-solution of the optimum, for more information see \cite{NemY,Nes.Bo}. In 1983, {\sc Nemirovski} \& {\sc Yudin} in \cite{NemY} proved that the lower complexity bound of first-order methods for finding an $\varepsilon$-solution of the optimum for smooth convex functions with Lipschitz continuous gradient is $O(1/\sqrt{\eps})$, while for Lipschitz continuous nonsmooth convex function it is $O(1/\eps^2)$. Indeed, the interesting feature of these error bounds is that they are independent of the problem's dimension. For the established class of problems, an algorithm called {\bf \emph{optimal}} if it can achieve these worst-case complexities. In a case of nonsmooth convex problems, the well-known subgradient decent and mirror descent methods are optimal, see \cite{NemY}. The seminal optimal first-order method for smooth convex problems with Lipschitz continuous gradient is introduced by {\sc Nesterov} \cite{Nes.83} in 1983, and since then he has developed the idea of optimal methods for various classes of problems in \cite{Nes.88,Nes.Bo,Nes.Co,Nes.Un}. It is clear that the gap between the complexity of smooth and nonsmooth convex problems is considerably big. This fact motivates {\sc Nesterov} to develop some schemes with better performance for nonsmooth convex problems so called smoothing techniques  where the worst-case complexity of these methods is in order $O(1/\eps)$ and can be improved using the idea of optimal gradient methods, see \cite{Nes.Sm,Nes.Se,Nes.E}. The theoretical analysis and surprising computational results of optimal methods are totally interesting, especially for solving large-scale problems. These appealing features along with increasing the interest of solving large-scale problems attract many authors to develop the idea of the optimal first-order methods, for examples {\sc Beck} \& {\sc Teboulle} \cite{BecT1}, {\sc Becker} et al.\ \cite{BecCG}, {\sc Devolder} et al.\ \cite{DevGN}, {\sc Gonzaga} et al.\ \cite{GonK,GonKR}, {\sc Lan} \cite{LanB}, {\sc Meng \& Chen} \cite{MenC}, {\sc Nemirovski} \& {\sc Nesterov} \cite{NemNes} and {\sc Tseng} \cite{Tse}.  

In most of the studies mentioned, the first difficulty in implementation is the algorithms need the knowledge of global parameters such as the Lipschitz constant of objective function for nonsmooth problems and the Lipschitz constant for its gradient in smooth cases. {\sc Nesterov} in \cite{Nes.Un} proposed an adaptive procedure to approximate the Lipschitz constant, however it still needs a suitable initial guess. Recently {\sc Lan} in \cite{LanB} based on employing bundle techniques avoids of requiring the global parameters. More recently, {\sc Neumaier} in \cite{NeuO} introduced a novel optimal algorithm on the basis of incorporating a linear relaxation of the objective function and a prox-function into a fractional subproblem to construct a framework that can be employed for both smooth and nonsmooth convex optimization in the same time and to avoid of needing Lipschitz constants. It called OSGA, where it only needs the first-order black-box oracle. In a case that no strong convexity is assumed, it shares the uniformity, freeness of global parameters and complexity properties of Lan's approach, but has a far simpler structure and derivation. Thanks to achieving the optimal complexity of first-order methods, it can be regarded as a fully adaptive alternative of Nesterov-type optimal methods. Furthermore, low memory requirements of this algorithm make it appropriate for solving large-scale practical problems. \\

{\bf Contribution.} The main focus of this paper is to extend and develop the basic idea of the optimal subgradient algorithm for minimizing general composite functions to deal with structured convex optimization problems in applications. There exist many applications of linear inverse problems with a complicated objective function consisting of various linear operators and regularizers, which are not supported by algorithms and solvers using the proximal mapping in their structures. For example, consider the scaled lasso problem that is defined the same as (\ref{e.BPD}) in which the regularization term $\|x\|_1$ is substituted by $\|Wx\|_1$ for a general linear operator $W$. In this case, the following proximal mapping should be solved as an auxiliary subproblem
\begin{equation*}
\mathrm{prox}_{\lambda}(y) = \argmin_{x \in \Rz^n} \frac{1}{2} \|x-y\|_2^2 + \|\mathcal{W} x\|_1.
\end{equation*}
Indeed, this proximity problem cannot be solved explicitly using {\bf \emph{iterative shrinkage-thresholding}} except for the case that $W$ is orthonormal, i.e., it cannot be solved using usual proximal-based algorithms. Thanks to accessible requirements of OSGA needing just function and subgradient evaluations, solving most of such complicated problems are tractable in reasonable costs. Furthermore, some appropriate prox-functions are proposed for unconstrained problems of the form (\ref{e.genf}) such that the corresponding subproblem of OSGA can be solved explicitly. We also adapt some Nesterov-type optimal methods originally proposed for smooth problems to solve nonsmooth problems by passing subgradient instead of gradient. The surprising results indicate that the adapted algorithms perform competitively or even better than optimal methods specifically proposed for solving nonsmooth problems. Finally, the paper is accompanied
with a software release.\\

{\bf Contents.} The rest of the paper is organized as follows. In next section, we establish the basic idea of the optimal subgradient algorithm for the general affine composite functions. Section 3 devotes to implementing OSGA and comparing it with some popular first-order methods and state-of-the-art solvers for some practical problems in applications. The OSGA software package is described in Section 4. Finally, we conclude our results in Section 5.

\vspace{-3mm}
\section{Optimal subgradient framework}
In this section, we shall give a short sketch of the optimal subgradient framework introduced by {\sc Neumaier} in \cite{NeuO} for composite functions in the presence of linear operators which is appropriate for solving considered applications of linear inverse problems. 

This paper considers composite functions of the form (\ref{e.genf}) or a more general form that will be discussed in the sequel, however we can still solve constrained problems by defining an indicator function. Let us to consider the following convex constrained problem
\begin{equation}\label{e.con}
\min_{x \in C}~ f(\mathcal{A} x),
\end{equation}
where $f:C \to \Rz$ is a convex function defined on a nonempty convex subset $C$ of a finite-dimensional real vector space $\mathcal{X}$ with a bilinear pairing $\<h,z\>$, where $h$ is in the dual space $\mathcal{X}^*$ which is formed by all linear functions on $\Rz$. The indicator function of the set $C$ is defined by
\begin{equation}\label{e.ind}
  I_C(x) = \left\{
  \begin{array}{ll}
    0\ \ & \hbox{$x \in C$}; \\
    \infty \ \ & \hbox{otherwise}. \\
  \end{array}  \right. 
\end{equation}
Since $C$ is convex set, it is straightforward to show $I_C$ is a convex function. Thus, by setting $\varphi(x) = I_C(x)$, we can reformulate the problem (\ref{e.con}) in an equivalent unconstrained form (\ref{e.genf}), which is desired problem of the current paper. 

In some applications, one has no easy access to the matrices used in (\ref{e.genf}), instead it is assembled in a subroutine without ever forming explicit matrices so that it is preferred to employ linear operators in (\ref{e.genf}) instead of their matrix representations. We generally consider functions of the form
\lbeq{e.genf1}
\Psi(x) = \sum_{i=1}^{n_1} f_i(\mathcal{A}_i x) + \sum_{j=1}^{n_2} \varphi_j(\mathcal{W}_j x),
\eeq
where $f_i: \mathcal{U}_i \to \Rz$ for $i = 1,2,\cdots,n_1$ are convex smooth functions, $\varphi_j: \mathcal{V}_j \to \Rz$ for $j = 1,2,\cdots,n_2$ are smooth or nonsmooth convex functions and $\mathcal{A}_i: \mathcal{X} \to \mathcal{U}_i$ for $i = 1,2,\cdots,n_1$ and $\mathcal{W}_j: \mathcal{X} \to \mathcal{V}_j$ for $j = 1,2,\cdots,n_2$ are linear operators. Here, $\Psi$ is called the {\bf \emph{multi-term affine composite function}}. Since each affine term $\mathcal{A}_i x$ or $\mathcal{W}_j x$ and each function $f_i$ or $\varphi_j$ for $i = 1, \cdots,n_1$ and $j = 1, \cdots,n_2$ is convex and the domain of $\Psi$ defined by
\[
\mathrm{dom}~ \Psi = \left( \bigcap_{i = 1}^{n_1} \mathrm{dom}~ f_i(\mathcal{A}_i x) \right) \bigcap \left( \bigcap_{j = 1}^{n_2} \mathrm{dom}~ \varphi_j (\mathcal{W}_j x) \right)
\]
is a convex set, then the function $\Psi$ is convex on its domain. Considering (\ref{e.genf1}), we are generally interested in solving the following composite convex optimization problem
\lbeq{e.genf2}
\Psi^* := \min_{x \in \mathcal{X}}~ \Psi(x) 
\eeq
Under considered properties of $\Psi$, the problem (\ref{e.genf2}) has a global optimizer denoted by $x^*$ and $\Psi^* = \Psi(x^*)$ is its global optimum. The functions of the form $\Psi$ are frequently appeared in the recent interests of employing hybrid regularizations or mixed penalty functions for solving problems in the fields such as signal and image processing, machine learning and statistics.  In the sequel, we will see that many well studied structured optimization problems are special cases of (\ref{e.genf1}). It is also clear that (\ref{e.genf2}) is a generalization of (\ref{e.genf}). For example, the following {\bf \emph{scaled elastic net}} problem
\lbeq{e.elan}
\Psi(x) = \frac{1}{2} \|\mathcal{A}x - b\|_2^2 + \lambda_1 \|\mathcal{W}_1x\|_1 + \frac{\lambda_2}{2} \|\mathcal{W}_2x\|_2^2
\eeq
can be considered in the form (\ref{e.genf1}) by setting $\Psi(x) = f_1(\mathcal{A}_1 x) + \varphi_1(\mathcal{W}_1 x) + \varphi_2(\mathcal{W}_2 x)$ where $f_1(\mathcal{A}_1 x) = \frac{1}{2} \|\mathcal{A}_1 x - b\|_2^2$, $\varphi_1(\mathcal{W}_1 x) = \lambda_1 \|\mathcal{W}_1 x\|_1$ and $\varphi_2(\mathcal{W}_2 x) = \frac{\lambda_2}{2} \|\mathcal{W}_2 x\|_2^2$. \\

{\bf Basic idea of optimal subgradient scheme.} This section summarizes the basic idea of OSGA \cite{NeuO} which is adapted for the considered unconstrained problem (\ref{e.genf2}). Let us to consider an arbitrary convex function $\Psi: \mathcal{X} \rightarrow \Rz$ of the form (\ref{e.genf1}). A vector $g_{\Psi}(x) \in \mathcal{X}^* $ is called a {\bf \emph{subgradient}} of $\Psi$ at a given point $x \in \mathrm{dom}~ \Psi$ if for any $z \in \mathrm{dom}~ \Psi$ we have that
\[
\Psi(z) \geq \Psi(x) + \langle g_{\Psi}(x),z - x \rangle. 
\]
The set of all subgradients of the function $\Psi$ at $x$ is called {\bf \emph{subdifferential}} of $\Psi$ at $x$ denoted by $\partial \Psi(x)$. The subdifferential of a general convex function $\Psi$ at any point $x$ in its domain is nonempty, and it is unique if $\Psi$ is differentiable.

We now consider a minimization problem of the form (\ref{e.genf1}) and suppose that the first-order black-box oracle 
\[
\mathcal{O}(x) = (\Psi(x),g_{\Psi}(x)),
\]
is available, where $g_{\Psi}(x) \in \partial \Psi(x)$ at $x \in \mathcal{X}$. The primary objective of the optimal subgradient algorithm (OSGA) is to construct a sequence of iterations such that the error bound $\Psi(x_b)-\Psi^*$ is monotonically reducing to be smaller than an accuracy parameter $\eps$, i.e.
\[
0 \leq \Psi_b - \Psi^* \leq \eps,
\]
where $\Psi_b := \Psi(x_b)$ is the function value in the best known point $x_b$. 

To derive a framework for OSGA, let us to define the following {\bf \emph{affine global underestimator}}, which is a linear relaxation of $\Psi$,
\lbeq{e.f1}
\Psi(z) \geq \gamma +\<h,z\>, 
\eeq
for all $z\in \mathrm{dom}~ \Psi$ with $\gamma\in\Rz$ and $h\in \mathcal{X}^*$, which is clearly available due to the first-order condition for the convex function $\Psi$. We also consider a continuously differentiable {\bf \emph{prox-function}} $Q: \mathcal{X} \to \Rz$, which is a strongly convex function with the convexity parameter set to $\sigma$ satisfying 
\lbeq{e.strc}
Q(\lambda x + (1 - \lambda) z) + \lambda (1 - \lambda) \frac{\sigma}{2} \|x - z\|^2 \leq \lambda Q(x) + (1 - \lambda)Q(z), 
\eeq
for all $x,z \in \mathcal{X}$ and $\lambda \in (0,1)$ where $\|.\|$ is an arbitrary norm of the vector space $\mathcal{X}$. This is equivalent to 
\lbeq{e.strc}
Q(z)\ge Q(x)+\<g_Q(x),z-x\>+\frac{\sigma}{2}\|z-x\|^2, 
\eeq
for all $x,z\in \mathcal{X}$ where $g_Q(x)$ denotes the gradient of $Q$ at $x$. Furthermore, we suppose that
\lbeq{e.Qinf}
Q_0: = \inf_{z\in \mathcal{X}} Q(z) >0.
\eeq
The point $z_0 \in \mathcal{X}$ minimizing the problem (\ref{e.Qinf}) is called the {\bf \emph{center}} of $Q$. At the end of this section, we will propose some prox-functions satisfying these conditions. Afterwards, the following minimization subproblem is defined
\lbeq{e.Eeta}
E(\gamma,h):=-\inf_{z\in ~\mathrm{dom}~ \Psi} \frac{\gamma+\<h,z\>}{Q(z)}.
\eeq
By the definition of prox-functions, it is clear that the denominator $Q(z)$ is positive and bounded away from zero on the feasible region $\mathrm{dom}~ \Psi$. The solution of this subproblem (\ref{e.Eeta}) is denoted by $u := U(\gamma,h)$, and it is supposed that $e := E(\gamma,h)$ and $u$ can be effectively computed. In particular, for the unconstrained problem (\ref{e.genf1}), we will show that this subproblem can be solved explicitly and cheaply for some suitable prox-functions. \\

{\bf Theoretical analysis.} We here establish a bound on error $\Psi(x_b)-\Psi^*$ for the proposed scheme. The definition of $e = E(\gamma,h)$ in (\ref{e.Eeta}), for arbitrary $\gamma_b \in \Rz$ and $h\in \mathcal{X}^*$, simply implies that
\lbeq{e.bou}
\gamma_b + \<h,z\> \ge -E(\gamma_b,h) Q(z), 
\eeq
for all $z\in \mathcal{X}$. From (\ref{e.f1}) and setting $\gamma_b=\gamma-\Psi(x_b)$, we have
\[
\Psi(z) - \Psi(x_b) \geq \gamma_b + \<h,z\>,
\] 
for all $z\in \mathcal{X}$. By this fact, assuming $E(\gamma_b,h)\le \eta$, setting $z = x^*$ and (\ref{e.bou}), we conclude the following upper bound on the function value error
\lbeq{e.error}
0 \leq \Psi(x_b)-\Psi^* \leq \eta Q(x^*).
\eeq
It is clear that if an upper bound for $Q(x^*)$ is known or assumed, this bound translates into a computable error estimate for the minimal function value. But even in the absence of such an upper bound, the optimization problem (\ref{e.genf1}) can be solved with a target accuracy
\lbeq{e.target}
0 \leq \Psi(x_b)-\Psi^* \leq \eps Q(x^*)
\eeq
if one manages to decrease the error factor $\eta$ from its initial value until $\eta \leq \eps$, for some target accuracy tolerance $\eps$. This bound clearly means that the rate of decrease in the error bound $\Psi(x_b)-\Psi^*$, is at least the same as convergence rate of the sequence $\eta$. Indeed, this fact is the main motivation of OSGA. {\sc Neumaier} in \cite{NeuO} considers a problem of the form (\ref{e.con}) and derive the following complexity bounds for OSGA matching to the optimal complexity bounds of first-order methods for smooth and nonsmooth problems, see \cite{NemY,Nes.Bo}.

\begin{theorem}
Suppose that $f$ is a convex function of the form (\ref{e.con}) and let $\{x_k\}$ is generated by OSGA. Then complexity bounds for smooth and nonsmooth problems are as follows: \\
(i) (Nonsmooth complexity bound) If the point generated by OSGA stay in bounded region of the interior of $C$, or $f$ is Lipschitz continuous in $C$, then the total number of iterations needed is $O\left(1/\varepsilon^2\right)$. Thus the asymptotic worst case complexity is $O\left(1 / \varepsilon^2 \right)$. \\
(ii) (Smooth complexity bound) If $f$ has Lipschitz continuous gradient, the total number of iterations needed for the algorithm is $O\left(1 / \sqrt{\varepsilon} \right)$.
\end{theorem}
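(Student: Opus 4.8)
I would start from the error estimate (\ref{e.error}), which already reduces the theorem to one quantitative question: at what rate does the error factor $\eta_k:=E(\gamma_{b,k},h_k)$ maintained by OSGA go to zero? Once $\eta_k\le\eps$ one has $0\le\Psi(x_{b,k})-\Psi^*\le\eps\,Q(x^*)$, so in both regimes it suffices to bound the number of iterations needed to reach $\eta_k\le\eps$; comparing the resulting counts with the lower bounds of \cite{NemY} then shows they are optimal, which is the content of the two displayed orders.

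The core of the plan is to extract a one-step recursion for $\eta_k$ from the subproblem (\ref{e.Eeta}). The key structural remark is that
\[
E(\gamma,h)=\sup_{z\in\mathrm{dom}\,\Psi}\frac{-\gamma-\langle h,z\rangle}{Q(z)}
\]
is a supremum of functions affine in $(\gamma,h)$ — since $Q(z)>0$ is a fixed positive scalar for each $z$ — hence convex in $(\gamma,h)$. In each iteration OSGA replaces the current linear underestimator $(\gamma_k,h_k)$ by a convex combination, with a step-size parameter $\alpha_k\in(0,1)$, of $(\gamma_k,h_k)$ with the fresh pair $(\gamma(x_k),g_k)$, where $g_k\in\partial\Psi(x_k)$ and $\gamma(x_k)=\Psi(x_k)-\langle g_k,x_k\rangle$ is the intercept of the supporting hyperplane of $\Psi$ at the trial point $x_k=x_{b,k}+\alpha_k(u_k-x_{b,k})$; by convexity of $\Psi$ this combination is again a global underestimator. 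Convexity of $E$ in its first two arguments, together with the bookkeeping $\gamma_{b}=\gamma-\Psi(x_b)$ and the fact that updating the best point only decreases $\Psi(x_{b,k})$, then yields an inequality of the form
\[
\eta_{k+1}\ \le\ (1-\alpha_k)\,\eta_k\ +\ \alpha_k\,\beta_k,\qquad \beta_k=E\big(\gamma(x_k)-\Psi(x_{b,k}),\,g_k\big),
\]
so everything reduces to bounding $\beta_k$ and to understanding the rule that selects $\alpha_k$.

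For (i) I would show $\beta_k$ is bounded by a constant. Writing $\beta_k=\sup_{z}\frac{\Psi(x_{b,k})-\Psi(x_k)+\langle g_k,x_k-z\rangle}{Q(z)}$, using $\Psi(x_{b,k})\le\Psi(x_k)$ and the quadratic lower bound $Q(z)\ge Q_0+\frac{\sigma}{2}\|z-z_0\|^2$ coming from the strong convexity (\ref{e.strc}) and (\ref{e.Qinf}), the quotient is bounded uniformly in $z$ as soon as $\|g_k\|_*$ and $\|x_k-z_0\|$ are bounded — and each hypothesis in (i) furnishes exactly that (global Lipschitzness of $f$ on $C$ bounds the subgradients directly; staying in a bounded region of $\mathrm{int}\,C$ bounds them through the local Lipschitz property of convex functions on compacta). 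With $\beta_k\le\bar\beta$, the recursion together with OSGA's step-size update, which in this regime makes $\alpha_k$ decrease at rate $\Theta(1/\sqrt{k})$, solves to $\eta_k=O(1/\sqrt{k})$ and hence to $O(1/\eps^2)$ iterations. For (ii) the improvement is the descent lemma: when $\nabla f$ is $L$-Lipschitz, $\Psi(x_k)$ exceeds its supporting-hyperplane model along the step $x_k=x_{b,k}+\alpha_k(u_k-x_{b,k})$ by at most $\tfrac{L}{2}\alpha_k^2\|u_k-x_{b,k}\|^2$; carrying this curvature control through the estimate for $\beta_k$ while tracking a potential function that couples $\eta_k$ with the prox-function values at the iterates (so that longer steps remain admissible) closes a sharper recursion giving $\eta_k=O(1/k^2)$, i.e. $O(1/\sqrt{\eps})$ iterations. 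Crucially, OSGA never uses $L$ or any Lipschitz modulus: the adaptive update of $\alpha_k$, driven by the measured ratio of predicted to realized decrease, is what selects the faster regime automatically when the smoothness is present.

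The step I expect to be the genuine obstacle is exactly this step-size analysis: one must prove that the adaptive update of $\alpha_k$ is well defined (it terminates) and that it keeps $\alpha_k$ at the same time large enough that the contraction $(1-\alpha_k)$ pushes $\eta_k$ down at the claimed rate and small enough that the perturbation $\alpha_k\beta_k$ never overwhelms it — all without knowing the global constants — and, for (ii), that the potential-function bookkeeping genuinely upgrades the $1/\sqrt{k}$ decay to $1/k^2$. Once that scaffolding is set up, the remaining estimates are routine consequences of the convexity of $E$, the strong convexity of $Q$, and the descent lemma; the complete argument is carried out in \cite{NeuO}.
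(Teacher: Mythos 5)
The paper itself gives no proof of this theorem---it is quoted from Neumaier's OSGA paper \cite{NeuO} with only a citation---and your sketch is a faithful outline of the argument carried out there: the reduction via (\ref{e.error}) to the decay rate of $\eta$, the convexity of $E(\gamma,h)$ as a supremum of affine functions combined with the convex-combination update of the linear relaxation to obtain the one-step recursion, the uniform bound on the fresh term from either Lipschitz continuity or boundedness together with $Q(z)\ge Q_0+\frac{\sigma}{2}\|z-z_0\|^2$, and the adaptive step-size analysis correctly identified as the technical core that separates the $O(1/\eps^2)$ and $O(1/\sqrt{\eps})$ regimes. Since both you and the paper ultimately defer to \cite{NeuO} for that core, your proposal is consistent with, and indeed more informative than, the paper's own treatment.
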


{\bf Algorithmic framework.} Before presenting the OSGA algorithm, we first look at the first-order oracle of the problem (\ref{e.genf1}) which is important in the presence of linear mappings, and then we outline the adapted algorithm. 

On the one hand, practical problems are commonly involving high-dimensional data, i.e. function and subgradient evaluations are quite expensive. On the other hand, in many applications the most computational cost of function and subgradient evaluations relates to applying direct and adjoint linear operators, originated from an existence of affine terms in (\ref{e.genf1}). This facts suggest that we should apply linear operators and their adjoints as less as possible during our implementations. For the sake of simplicity in the rest of the paper, we employ $\Psi_x$ and $g_x$ instead of $\Psi(x)$ and $g_{\Psi}(x)$, respectively. Considering the structure of (\ref{e.genf1}), we see that affine terms $\mathcal{A}_i x$ for $i = 1,2, \cdots,n_1$ and $\mathcal{W}_j x$ for $j = 1,2, \cdots,n_2$ are appearing in both function and subgradient evaluations. As a result, in the oracle, we set $v_x^i = \mathcal{A}_i x$ for $i = 1,2, \cdots,n_1$ and $w_x^j = \mathcal{W}_j x$ for $j = 1,2, \cdots,n_2$. Therefore, the function value and subgradient of $\Psi$ at $x$ are computed in the following scheme:\\

\begin{algorithm}[H] \label{a.nfofg}
\DontPrintSemicolon 
\KwIn{global parameters: $\mathcal{A}_i~ \mathrm{for}~ i = 1, \cdots,n_1$;~ $\mathcal{W}_j~ \mathrm{for}~ j = 1, \cdots,n_2$;
    ~ local parameters: $x$; }
\KwOut{$\Psi_x$;~ $g_{\Psi}(x)$;}
\Begin{
    $v_x^i \gets \mathcal{A} x~ \mathrm{for}~ i = 1, \cdots,n_1$;\; 
    $w_x^j \gets \mathcal{W} x~ \mathrm{for}~ j = 1, \cdots,n_2$; \;
    $\Psi_x \gets \sum_{i=1}^{n_1} f_i(v_x^i) +  \sum_{j=1}^{n_2} \varphi_j(w_x^j)$;\;
    $g_{\Psi}(x) \gets \sum_{i=1}^{n_1} \mathcal{A}_i^* ~\partial f_i(v_{x}^i) + 
              \sum_{j=1}^{n_2} \mathcal{W}_i^* ~\partial \varphi_j(w_{x}^j)$;
}
\caption{ NFO-FG nonsmooth first-order oracle}
\end{algorithm}

\vspace{6mm}
The structure of NFO-FG immediately implies that any call of the oracle $\mathcal{O}(x) = (\Psi_x, g_{\Psi}(x))$ requires $n_1 + n_2$ calls of each direct and adjoint linear operator. It is also clear that by this scheme, one can avoid of double applying of expensive linear operators in the computation of function values and subgradients. In cases that algorithms need only a function value or a subgradient, we consider two special cases of NFO-FG that just return a function value or a subgradient, which are respectively called NFO-F and NFO-G. We also emphasize that in the cases that total computational cost of the oracle dominated by applying linear mappings, the complexity of an algorithm can be measured by counting the number of direct and adjoint linear operators used to achieve a prescribed accuracy $\eps$.

Considering what established in this section and those introduced in Section 2 of \cite{NeuO} about linear relaxation constructions and the case of strongly convex functions, we outline a version of OSGA for multi-term affine composite function (\ref{e.genf1}) as follows:\\\\

\begin{algorithm}[H] \label{a.osga}
\DontPrintSemicolon 
\KwIn{global tuning parameters: $\delta$;~ $\alpha_{\max}\in{]0,1[}$;~ $0<\kappa'\le\kappa$;~~ local parameters: $x_0$;~$\mu \geq 0$;~  $\Psi_{\mathrm{target}}$;}
\KwOut{$x_b$; $\Psi_{x_b}$;}
\Begin{
    choose an initial best point $x_b$;\;
    compute $\Psi_{x_b}$ and $g_{\Psi}(x_b)$ using NFO-FG;\;
    \eIf{$\Psi_{x_b} \leq \Psi_{\mathrm{target}}$} {
        stop;\;
    }{
        $h \gets g_{x_b}-\mu g_Q(x_b)$;~~ $\gamma \gets \Psi_{x_b}-\mu Q(x_b)-\<h,x_b\>$;\;
        $\gamma_b \gets \gamma - \Psi_{x_b}$;~~ $u \gets U(\gamma_b,h)$;~~ $\eta \gets 
        E(\gamma_b,h)-\mu$;\;
    }
    $\alpha \gets \alpha_{\max}$;\\
    \While {stopping criteria do not hold}{
        $x \leftarrow x_b+\alpha(u-x_b)$;\;
        compute $\Psi_x$ and $g_{\Psi}(x)$ using NFO-FG;\;
        $g \leftarrow g_x-\mu g_Q(x)$;~~ $\ol h \leftarrow h+\alpha(g-h)$;\;
        $\ol\gamma \leftarrow \gamma+\alpha(\Psi_x - \mu Q(x)-\<g,x\>-\gamma)$;\;
        $x_b' \leftarrow \argmin_{z \in \{x_b,x\}} \Psi(z, v_z)$;~~ $\Psi_{x_b'} \leftarrow 
        \min \{\Psi_{x_b}, \Psi_x\}$;\;
        $\gamma_b' \leftarrow \ol\gamma-f_{x_b'}$;~~ $u' \leftarrow U(\gamma_b',\ol h)$;~~ 
        $x' \leftarrow x_b+\alpha(u'-x_b)$;\;
        compute $\Psi_{x'}$ using NFO-F;\;
        choose $\ol x_b$ in such a way that $\Psi_{\ol x_b}\le \min\{\Psi_{x_b'},\Psi_{x'} \}$;\;
        $\ol\gamma_b \leftarrow \ol\gamma - \Psi_{\ol x_b}$;~~ $\ol u \leftarrow U(\ol \gamma_b,\ol h)$;~~
        $\ol \eta \leftarrow E(\ol\gamma_b,\ol h)-\mu$;\;
        $x_b \leftarrow \ol x_b$; ~ $\Psi_{x_b} = \Psi_{\ol x_b}$;\;
        \eIf {$\Psi_{x_b} \leq \Psi_{\mathrm{target}}$}{
            stop;\;
        }{
            update the parameters $\alpha$, $h$, $\gamma$, $\eta$ and $u$ using PUS;
        }
    }
}
\caption{ {\sc OSGA} for multi-term affine composite functions}
\end{algorithm}

\vspace{6mm}
Pay attention to the structure of problem (\ref{e.genf1}) and the OSGA framework, we only require to calculate the following simple operations to implement the algorithm. 
{\renewcommand{\labelitemi}{$\bullet$}
\begin{itemize}
\item $x + \beta y$, where $x, y \in \mathcal{U}_i$ for $i = 1, \cdots, n_1$ and $\beta \in \Rz$;
\item $r + \beta s$, where $r, s \in \mathcal{V}_j$ for $j = 1, \cdots, n_2$ and $\beta \in \Rz$;
\item $\mathcal{A}_i x$ for $i = 1, \cdots, n_1$ and $\mathcal{W}_j x$ for $j = 1, \cdots, n_2$ where $x \in \mathcal{X}$;
\item $\mathcal{A}_i^* y$ with $y \in \mathcal{U}_i$ for $i = 1, \cdots, n_1$ and $\mathcal{W}_j^* y$ with $y \in \mathcal{V}_j$ for $j = 1, \cdots, n_2$.
\end{itemize}
As a result of (\ref{e.target}), a natural stopping criterion for the algorithm is the condition $\eta \leq \epsilon$, but our experiments shows that satisfying this condition is slow and takes many iterations. Then more sophisticated stopping criteria for the algorithm is needed. For example a bound on the number of iterations, function evaluations, subgradient evaluations or the running time or a combination of them can be used.

As discussed in \cite{NeuO}, OSGA uses the following scheme for updating the given parameters $\alpha$, $h$, $\gamma$, $\eta$ and $u$:\\
   
\begin{algorithm}[H] \label{a.par}
\DontPrintSemicolon 
\KwIn{global tuning parameters: $\delta$;~ $\alpha_{\max}\in{]0,1[}$;~ $0<\kappa'\le\kappa$;~~ local parameters: $\alpha$; $\eta$; $\bar{h}$; $\bar{\gamma}$; $\bar{\eta}$; $\bar{u}$;}
\KwOut{$\alpha$;~ $h$;~ $\gamma$;~ $\eta$;~ $u$;}
\Begin{
    $R \gets \left(\eta-\ol \eta)/(\delta\alpha \eta \right)$;\;
    \eIf{$R<1$} {
        $h \gets \ol h$;\;
    }{
        $\ol\alpha \leftarrow \min(\alpha e^{\kappa' (R-1)},\alpha_{\max})$;
    }
    $\alpha \gets \ol \alpha$;\;
    \If{$\ol \eta<\eta$}{
        $h \gets \ol h$;~ $\gamma \gets \ol \gamma$;~ $\eta \gets \ol \eta$;~ $u \gets \ol u$;\;
    }
}
\caption{ PUS parameters updating scheme}
\label{algo:max}
\end{algorithm}

\vspace{6mm}
We here emphasize that the composite version of optimal subgradient algorithms inherits the main characteristics of the original OSGA framework. Thus, we expect the same theoretical analysis to be the same as that reported for OSGA in \cite{NeuO} which means that the current version remains optimal. \\

{\bf Prox-functions and solving the auxiliary subproblem.} In the rest of this section, we first propose some prox-functions and then derive a closed form for the corresponding subproblem (\ref{e.Eeta}).

As mentioned, the prox-function $Q(z)$ should be a strongly convex function that is analytically known. This means that it takes its unique global minimum which is positive by the definition of $Q$. Suppose $z_0$ is the global minimizer of $Q$. By the definition of center of $Q$ and the first-order optimality condition for (\ref{e.Qinf}), we have that $g_Q(z_0) = 0$. This fact along with (\ref{e.strc}) imply
\lbeq{e.proin}
Q(z) \geq Q_0 + \frac{\sigma}{2} \|z - z_0\|^2,
\eeq
which simply means that $Q(z)>0$ for all $z \in \mathcal{X}$. In addition, it is interested that prox-functions be  separable. Taking this fact into account and (\ref{e.proin}), appropriate choices of prox-functions for unconstrained problem (\ref{e.genf1}) can be
\lbeq{e.prox1}
Q_1(z) := Q_0 + \frac{\sigma}{2} \|z - z_0\|_2^2
\eeq
and
\lbeq{e.prox2}
Q_2(z) := Q_0 + \frac{\sigma}{2} \sum_{i = 1}^n w_i (z_i - (z_0)_i)^2,
\eeq
where $Q_0 > 0$ and $w_i \geq 1$ for $i = 1, 2, \cdots, n$. It is not hard to show that both (\ref{e.prox1}) and (\ref{e.prox2}) are strongly convex functions satisfying (\ref{e.proin}). 

To introduce a more general class of prox-functions, let us to define the {\bf \emph{quadratic norm}} on vector space $\mathcal{X}$ using
\[
\|z\|_{\mathcal{D}}:=\sqrt{\<\mathcal{D}z,z\>}
\]
by means of a {\bf \emph{preconditioner}} $\mathcal{D}$, where $\mathcal{D}$ is symmetric and positive definite. The associated dual norm on $\mathcal{X}^*$ is then given by
\[
\|h\|_{*\mathcal{D}} := \|\mathcal{D}^{-1} h\|_{\mathcal{D}} = \sqrt{\<h,\mathcal{D}^{-1} h\>},
\]
where $\mathcal{D}^{-1}$ denotes an inverse of $\mathcal{D}$. Given a symmetric and positive definite preconditioner $\mathcal{D}$, then it is natural to consider the quadratic function
\lbeq{e.prox3}
Q(z):= Q_0 + \frac{\sigma}{2} \|z-z_0\|_\mathcal{D}^2,
\eeq
where $Q_0$ is a positive number and $z_0\in \mathcal{X}$ is the center of $Q$. Now, we should prove that $Q$ is a prox-function and satisfies (\ref{e.proin}). The fact that $g_Q(x) = \sigma \mathcal{D} (x - z_0)$ leads to 
\begin{equation*}
\begin{split}
Q(x) + \langle g_Q(x),z - x \rangle + \frac{\sigma}{2} \|z - x\|_{\mathcal{D}}^2 &= 
Q(x) + \langle g_Q(x), z - x \rangle + \frac{\sigma}{2} \langle \mathcal{D} (z - x), z - x \rangle\\
& = Q_0 + \frac{\sigma}{2} \langle \mathcal{D} (x - z_0), x - z_0 \rangle+ \langle g_Q(x), z - x \rangle + \frac{\sigma}{2} \langle \mathcal{D} (z - x), z - x \rangle\\
& = Q_0 + \frac{\sigma}{2} \langle \mathcal{D} (x - z_0), z - z_0 \rangle + \frac{\sigma}{2} \langle \mathcal{D} (z - z_0), z - x \rangle\\
& = Q_0 + \frac{\sigma}{2} \langle \mathcal{D} (x - z_0), z - z_0 \rangle + \frac{\sigma}{2} \langle \mathcal{D} 
(z - x), z - z_0\rangle\\
& = Q_0 + \frac{\sigma}{2} \langle \mathcal{D} (z - z_0), z - z_0 \rangle \\
& =  Q(z).
\end{split}
\end{equation*} 
This clearly means that $Q$ is strongly convex by the convexity parameter $\sigma$, i.e., (\ref{e.prox3}) is a prox-function. Moreover, $z_0$ is the center of $Q$ and $g_Q(z_0) = 0$. This fact together with (\ref{e.strc}) and (\ref{e.Qinf}) imply that (\ref{e.proin}) holds.

At this point, we emphasize that an efficient solving of the subproblem (\ref{e.Eeta}) is highly related to the selection of prox-functions. While using a suitable prox-function allows a very efficient way of computing $u$, employing other selections may significantly slow down the process of solving the auxiliary subproblem. Here, it is assumed that by appropriate selections of prox-functions the subproblem (\ref{e.Eeta}) can be solved explicitly. In the following result, we verify the solution of (\ref{e.Eeta}) using the prox-function (\ref{e.prox3}). 

\begin{prop}\label{p.2}
Suppose $Q$ is determined by (\ref{e.prox3}) and $Q_0 > 0$. Then $Q$ is a prox-function with the center $z_0$ and satisfies (\ref{e.proin}). Moreover, the subproblem (\ref{e.Eeta}) corresponds to this $Q$ is explicitly solved as follows
\lbeq{e.h0}
u = z_0 - E(\gamma,h)^{-1} \sigma^{-1} \mathcal{D}^{-1} h
\eeq
with
\lbeq{e.E0}
E(\gamma,h)
=\frac{-\beta_1+\sqrt{\beta_1^2 + 4 Q_0 \beta_2}}{2Q_0}
=\frac{2 \beta_2}{\beta_1+\sqrt{\beta_1^2 + 4 Q_0 \beta_2}},
\eeq
where $\beta_1 = \gamma + \langle h,z_0 \rangle$ and $\beta_2 = \left( \frac{1}{2} \sigma^{-2} - \sigma^{-1} \right) \|h\|_*^2$.
\end{prop}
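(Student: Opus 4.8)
The first assertion needs no real work: it was effectively established in the displayed chain of equalities just before the statement, which shows that for $Q$ in (\ref{e.prox3}) one has $Q(x)+\langle g_Q(x),z-x\rangle+\frac{\sigma}{2}\|z-x\|_{\mathcal{D}}^2=Q(z)$ for all $x,z\in\mathcal{X}$; hence (\ref{e.strc}) holds (with equality), $Q$ is $\sigma$-strongly convex, $g_Q(z_0)=0$ exhibits $z_0$ as the center, and (\ref{e.proin}) follows from (\ref{e.strc}) together with (\ref{e.Qinf}). So I would just recall this and devote the proof to (\ref{e.h0})--(\ref{e.E0}).

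For the subproblem the plan is a straightforward first-order analysis of the fractional objective $\psi(z):=\dfrac{\gamma+\langle h,z\rangle}{Q(z)}$. First I would check well-posedness: by (\ref{e.proin}) the denominator is bounded below by a positive constant and grows quadratically while the numerator is affine, so $\psi(z)\to 0$ as $\|z\|\to\infty$; moving far along a direction with $\langle h,\cdot\rangle<0$ makes $\psi$ negative (when $h\neq 0$), so $\inf\psi<0$ and is attained at an interior stationary point. Next I would translate by the center, $y:=z-z_0$, so that $Q(z)=Q_0+\frac{\sigma}{2}\langle\mathcal{D}y,y\rangle$ and $\gamma+\langle h,z\rangle=\beta_1+\langle h,y\rangle$ with $\beta_1=\gamma+\langle h,z_0\rangle$.

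Then I would write the stationarity condition. Using $\nabla\langle\mathcal{D}y,y\rangle=2\mathcal{D}y$, the equation $\nabla\psi=0$ reads $h\,Q(z)=(\beta_1+\langle h,y\rangle)\,\sigma\mathcal{D}y$. Writing $t$ for the unknown optimal value $\psi(y)$, the numerator equals $t\,Q(z)$, and dividing by $Q(z)>0$ gives $h=t\,\sigma\,\mathcal{D}y$, i.e. $y=(t\sigma)^{-1}\mathcal{D}^{-1}h$; putting $E(\gamma,h)=-t$ yields exactly $u=z_0-E(\gamma,h)^{-1}\sigma^{-1}\mathcal{D}^{-1}h$, which is (\ref{e.h0}). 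Substituting this $y$ into the identity $\beta_1+\langle h,y\rangle=t\big(Q_0+\frac{\sigma}{2}\langle\mathcal{D}y,y\rangle\big)$ and using $\langle h,\mathcal{D}^{-1}h\rangle=\|h\|_{*\mathcal{D}}^2$ collapses everything to a scalar quadratic $Q_0t^2-\beta_1t-\beta_2=0$ whose free coefficient $\beta_2$ is the appropriate positive multiple of $\sigma^{-1}\|h\|_{*\mathcal{D}}^2$. Since the product of the two roots is $-\beta_2/Q_0<0$ they have opposite signs; the minimum value is the negative root $t_-$, so $E(\gamma,h)=-t_-=\dfrac{-\beta_1+\sqrt{\beta_1^2+4Q_0\beta_2}}{2Q_0}$, and the second expression in (\ref{e.E0}) follows by multiplying numerator and denominator by $\beta_1+\sqrt{\beta_1^2+4Q_0\beta_2}$ (the numerically stable form when $\beta_1>0$).

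The step I expect to be the real obstacle is not the algebra but justifying that the stationary point we pick is the \emph{global} minimizer: $\psi$ is not convex, so I would argue that $\inf\psi<0$ is attained (the asymptotics above), that any such minimizer lies in the open set $\{\psi<0\}$ where the sublevel sets $\{\psi\le\alpha\}$, $\alpha\le 0$, are superlevel sets of the concave function $\alpha Q-\langle h,\cdot\rangle-\gamma$ and hence convex, and that there is exactly one stationary point with $\psi<0$ (the negative root of the quadratic) — so it must be the global minimum. The degenerate case $h=0$, in which $U(\gamma,h)$ is not actually needed, should be dismissed separately. Everything else is routine differentiation and solving a quadratic.
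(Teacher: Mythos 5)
Your proof is correct and follows essentially the route the paper itself intends: the paper's own proof of this proposition is only a two-line deferral (the strong-convexity identity is the displayed computation just before the statement, and the subproblem is dismissed as ``similar to that discussed in \cite{NeuO} for $\sigma=1$''), while the fully written-out proof of the companion Frobenius-norm proposition proceeds exactly as you do: stationarity of $E_{\gamma,h}$, the relation $eQ(u)+\gamma+\langle h,u\rangle=0$, back-substitution into a scalar quadratic, and selection of the larger root. Your justification that the chosen stationary point is the \emph{global} minimizer (attainment of $\inf\psi<0$ from the asymptotics, convexity of $\{\psi\le\alpha\}$ for $\alpha\le 0$, and uniqueness of the stationary point with negative value) is a genuine addition --- the paper nowhere addresses this, and it is the only non-routine step; in fact the last two observations alone already close the argument. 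One discrepancy is worth flagging: your algebra yields the quadratic $Q_0e^2+\beta_1 e-\tfrac12\sigma^{-1}\|h\|_{*\mathcal D}^2=0$, i.e.\ $\beta_2=\tfrac12\sigma^{-1}\|h\|_{*\mathcal D}^2>0$ in the form $Q_0e^2+\beta_1e-\beta_2=0$, whereas the proposition states $\beta_2=\bigl(\tfrac12\sigma^{-2}-\sigma^{-1}\bigr)\|h\|_*^2$, which is negative already at $\sigma=1$ and would make the discriminant $\beta_1^2+4Q_0\beta_2$ in (\ref{e.E0}) smaller than $\beta_1^2$ (possibly negative) rather than larger. Your value is the consistent one --- it is exactly what makes (\ref{e.E0}) the positive root and reduces to Neumaier's $\tfrac12\|h\|_*^2$ at $\sigma=1$ --- so do not try to force your computation to reproduce the printed constant; it appears to carry a sign/power slip inherited from the Frobenius computation in the paper.
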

\begin{proof} 
We already proved that $Q$ is a prox-function and (\ref{e.proin}) holds. The rest of the proof is similar to that discussed in \cite{NeuO} for $\sigma = 1$. \qed
\end{proof}

It is obvious that (\ref{e.prox1}) and (\ref{e.prox2}) are special cases of (\ref{e.prox3}), so the solution (\ref{e.h0}) and (\ref{e.E0}) derived for (\ref{e.prox3}) can be simply adapted for the situation  (\ref{e.prox1}) or (\ref{e.prox2}) used as a prox-function. Furthermore, notice that the error bound (\ref{e.error}) is proportional to $Q(x^*)$ implying that an acceptable choosing of $x_0$  make the term $Q(x^*) = Q_0 + \frac{1}{2} \|x^*-x_0\|_\mathcal{D}^2$ enough small. Hence, selecting a suitable starting point $x_0$ close to the optimizer $x^*$ as much as possible has a positive effect on giving a better complexity bound. This also propose that a reasonable choice for $Q_0$ can be $Q_0 \approx \half \| x^*-x_0 \|^2$. This fact will be considered along numerical experiments. 

We conclude this section by considering cases that the variable domain is a set of all $m \times n$ matrices, $\mathcal{X} = \Rz^{m \times n}$. In this case, we introduce a suitable prox-function such that the subproblem (\ref{e.Eeta}) is solved in a closed form. Details are described in the following proposition.

\begin{prop}\label{p.2}
Suppose $Q_0 > 0$ and $Q$ is determined by 
\lbeq{e.prox4}
Q(Z):= Q_0 + \frac{\sigma}{2} \|Z - Z_0\|_F^2,
\eeq
where $\|.\|_F$ is the Frobenius norm. Then $Q$ is a prox-function with the center $Z_0$ and satisfies (\ref{e.proin}). Moreover, the subproblem (\ref{e.Eeta}) corresponded to this $Q$ is explicitly solved by
\lbeq{e.h1}
U = Z_0 - E(\gamma,H)^{-1} \sigma^{-1} H
\eeq
with
\lbeq{e.E1}
E(\gamma,H)
=\frac{-\xi_1+\sqrt{\xi_1^2 + 4 Q_0 \xi_2}}{2Q_0}
=\frac{2 \xi_2}{\xi_1+\sqrt{\xi_1^2 + 4 Q_0 \xi_2}},
\eeq
where $\xi_1 = \gamma + Tr(H^TZ_0)$ and $\xi_2 = \left( \frac{1}{2} \sigma^{-2} - \sigma^{-1} \right) \|H\|_F^2$.
\end{prop}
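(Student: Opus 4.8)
The plan is to reduce the matrix problem to the vector problem already handled in Proposition~\ref{p.2} (the $\|\cdot\|_\mathcal{D}$ case) by exploiting the standard isometry between $(\Rz^{m\times n},\|\cdot\|_F)$ and $(\Rz^{mn},\|\cdot\|_2)$. Concretely, I would let $\mathrm{vec}:\Rz^{m\times n}\to\Rz^{mn}$ denote column-stacking; this is a linear bijection with $\langle A,B\rangle_F = Tr(A^TB) = \langle \mathrm{vec}(A),\mathrm{vec}(B)\rangle_2$ and hence $\|A\|_F = \|\mathrm{vec}(A)\|_2$. Under this identification the prox-function (\ref{e.prox4}) becomes exactly the prox-function (\ref{e.prox3}) with $\mathcal{D}=I$, center $z_0=\mathrm{vec}(Z_0)$, convexity parameter $\sigma$ (with respect to $\|\cdot\|_2$), and the same constant $Q_0>0$. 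Therefore the fact that $Q$ in (\ref{e.prox4}) is a prox-function with center $Z_0$ satisfying (\ref{e.proin}) is immediate from the corresponding statement in Proposition~\ref{p.2}, transported back through $\mathrm{vec}^{-1}$; alternatively one can repeat verbatim the short algebraic identity shown above for (\ref{e.prox3}), replacing $\langle \mathcal{D}\,\cdot\,,\cdot\rangle$ by $Tr((\cdot)^T(\cdot))$ and noting $g_Q(X)=\sigma(X-Z_0)$.

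Next I would translate the subproblem (\ref{e.Eeta}). Writing the affine functional on $\Rz^{m\times n}$ as $Z\mapsto \gamma + \langle H,Z\rangle_F = \gamma + Tr(H^TZ)$, the quantity $E(\gamma,H)$ defined by the infimum of $(\gamma+Tr(H^TZ))/Q(Z)$ over $Z$ coincides, after applying $\mathrm{vec}$, with $E(\gamma,h)$ for $h=\mathrm{vec}(H)$ and the Euclidean prox-function $Q_0+\tfrac{\sigma}{2}\|\cdot-\mathrm{vec}(Z_0)\|_2^2$. Here the dual norm is just $\|h\|_* = \|h\|_2 = \|H\|_F$, so $\beta_1 = \gamma + \langle h, \mathrm{vec}(Z_0)\rangle = \gamma + Tr(H^TZ_0) = \xi_1$ and $\beta_2 = (\tfrac12\sigma^{-2}-\sigma^{-1})\|h\|_2^2 = (\tfrac12\sigma^{-2}-\sigma^{-1})\|H\|_F^2 = \xi_2$. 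Proposition~\ref{p.2} then gives $E(\gamma,H) = E(\gamma,h)$ by the formula (\ref{e.E0}), which is precisely (\ref{e.E1}); and the minimizer $u = \mathrm{vec}(Z_0) - E(\gamma,h)^{-1}\sigma^{-1} h$ (with $\mathcal{D}^{-1}=I$) maps back under $\mathrm{vec}^{-1}$ to $U = Z_0 - E(\gamma,H)^{-1}\sigma^{-1} H$, which is (\ref{e.h1}). This closes the proof.

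I do not expect a genuine obstacle here: the only points requiring care are purely bookkeeping — verifying that $\mathrm{vec}$ is an isometry for the relevant pairings (so that strong convexity constants and dual norms are preserved), and checking that $\beta_i$ maps to $\xi_i$ so the closed-form expressions line up. The one item worth stating explicitly, rather than waving through, is well-definedness of $E(\gamma,H)$: since $Q(Z)\ge Q_0>0$ is bounded away from zero on all of $\Rz^{m\times n}$ and the numerator is affine with the leading quadratic term of the denominator dominating, the infimum in (\ref{e.Eeta}) is finite and attained, exactly as in the vector case. Given all of this, the most economical write-up is simply: ``apply $\mathrm{vec}$ and invoke Proposition~\ref{p.2}'' — i.e.\ the same remark the authors make in their one-line proof, namely that the argument is identical to the one in \cite{NeuO} up to the scaling by $\sigma$.
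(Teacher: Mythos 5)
Your reduction is correct, but it is not the route the paper takes: the paper's proof of this proposition is \emph{not} a one-line remark (that was the proof of the preceding $\|\cdot\|_{\mathcal{D}}$ proposition); here the author redoes the whole computation from scratch. Concretely, the paper verifies strong convexity of $Q$ by the direct algebraic identity $Q(X)+\langle g_Q(X),Z-X\rangle+\tfrac{\sigma}{2}\|Z-X\|_F^2=Q(Z)$ with $g_Q(X)=\sigma(X-Z_0)$, then introduces $E_{\gamma,H}(Z)=-(\gamma+\langle H,Z\rangle)/Q(Z)$, applies the first-order optimality condition to obtain $U=Z_0-E(\gamma,H)^{-1}\sigma^{-1}H$, substitutes back into $E(\gamma,H)\,Q(U)=-\gamma-\langle H,U\rangle$ to get the quadratic $Q_0e^2+\xi_1 e+\xi_2=0$, and selects the larger root. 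Your approach instead transports everything through the column-stacking isometry $\mathrm{vec}$, observing that (\ref{e.prox4}) is exactly (\ref{e.prox3}) with $\mathcal{D}=I$, that the Frobenius pairing and norm become the Euclidean ones, and that $\beta_1\mapsto\xi_1$, $\beta_2\mapsto\xi_2$, so the closed forms (\ref{e.h0})--(\ref{e.E0}) pull back to (\ref{e.h1})--(\ref{e.E1}). Both arguments are valid and rest on the same underlying calculation; yours is more economical and makes the redundancy of the second proposition explicit, while the paper's self-contained derivation has the advantage of not leaning on the first proposition, whose proof is itself only sketched by reference to \cite{NeuO}. Your explicit attention to well-definedness of the infimum (via $Q\ge Q_0>0$) is a point the paper glosses over in both propositions, so nothing is lost in the transfer.
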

\begin{proof}
By the features of the Frobenius norm, it is clear that $Z = Z_0$ is the minimizer of (\ref{e.Qinf}). Also, $Q$ is continuously differentiable $g_Q(x) = \sigma (X - Z_0)$. Thus, we have 
\begin{equation*}
\begin{split}
Q(X) + \langle g_Q(X), Z - X \rangle + \frac{\sigma}{2} \|Z - X\|_F^2 &= 
Q_0 + \frac{\sigma}{2} \langle X - Z_0, X - Z_0 \rangle + \sigma \langle X - Z_0, Z - X \rangle + \frac{\sigma}{2} \|Z - X\|_F^2\\
& = Q_0 + \frac{\sigma}{2} \langle (X - Z_0), Z - Z_0 \rangle + \frac{\sigma}{2} \langle (Z - Z_0), Z - X \rangle\\
& = Q_0 + \frac{\sigma}{2} \langle (Z - Z_0), Z - Z_0 \rangle \\
& =  Q(Z).
\end{split}
\end{equation*} 
This means that $Q$ is strongly convex by the convexity parameter $\sigma$, i.e., (\ref{e.prox3}) is a prox-function. Also by (\ref{e.strc}) and (\ref{e.Qinf}), the condition (\ref{e.proin}) is clearly hold. 

Now, we consider the subproblem (\ref{e.Eeta}) and drive its minimizer. Let us to define the function $E_{\gamma,H}:\mathcal{X} \rightarrow \Rz$ using
\[
E_{\gamma,H}(Z):=-\frac{\gamma+\<H,Z\>}{Q(Z)},
\]
where it is continuously differentiable and attains its supremum at $Z = U$. Therefore, in the point $Z = U$, we obtain 
\lbeq{e.etae2}
E(\gamma,H)Q(U) = -\gamma - \<H,U\>.
\eeq
It follows from $g_Q(Z) = \sigma (Z - Z_0)$, (\ref{e.etae2}) and the first-order necessary optimality condition that
\[ 
E(\gamma,H) \sigma (U - Z_0) + H = 0
\]
or equivalently
\[
U = Z_0 - E(\gamma,H)^{-1} \sigma^{-1} H.
\]
Setting $e = E(\gamma,H)$ and substituting it into (\ref{e.etae2}) straightforwardly lead to

\begin{equation*}
\begin{split}
e Q(U) + \gamma + \langle H,U\rangle & = e \left(Q_0 + \frac{1}{2} \| e^{-1} \sigma^{-1} H \|_F^2\right) + \gamma + \langle H,Z_0 - e^{-1} \sigma^{-1} H \rangle \\
& = e \left( Q_0 + \frac{1}{2} e^{-2} \sigma^{-2} \|H\|_F^2 \right) + \gamma + Tr(H^T Z_0) + e^{-1} \sigma^{-1} \|H\|_F^2  \\
& = Q_0 e^2 + \left( \gamma + Tr(H^T Z_0) \right) e + \left( \frac{1}{2} \sigma^{-2} - \sigma^{-1}\right) \|H\|_F^2\\
& = Q_0 e^2 + \xi_1 e + \xi_2 = 0,
\end{split}
\end{equation*}
where $\xi_1 = \gamma + Tr(H^T Z_0)$ and $\xi_2 = \left( \frac{1}{2} \sigma^{-2} - \sigma^{-1} \right) \|H\|_F^2$. Solving this quadratic equation and selecting the larger solution lead to 
\[
E(\gamma,H)
=\frac{-\xi_1+\sqrt{\xi_1^2 + 4 Q_0 \xi_2}}{2Q_0}
=\frac{2 \xi_2}{\xi_1+\sqrt{\xi_1^2 + 4 Q_0 \xi_2}}.
\]
This completes the proof. \qed
\end{proof}
\section{Numerical experiments}\label{s.pra}
This section reports extensive numerical experiments of the proposed algorithm dealing with some famous inverse problems in applications compared with some popular algorithms and software packages. We assess the convergence speed of OSGA and some widely used first-order methods for solving such practical problems. The experiment is divided into two parts. We first consider some imaging problems and give results of experiments with OSGA and some state-of-the-art solvers. In the second part, we adapt some smooth optimization algorithms for nonsmooth problems and report encouraging results and comparisons.

All codes are written in MATLAB, and default values of parameters for the algorithms and packages are used. For OSGA, we used the prox-function (\ref{e.prox1}) with $Q_0 = \frac{1}{2} \|x_0\|_2 + \epsilon$, where $\epsilon$ is the machine precision in MATLAB. We also set the following parameters for OSGA:
\begin{equation*}
\delta = 0.9;~~ \alpha_{max} = 0.7;~~ \kappa = \kappa' = 0.5;~~ \Psi_{\mathrm{target}} = - \infty. 
\end{equation*}
All implementations are executed on a Toshiba Satellite Pro L750-176 laptop with Intel Core i7-2670QM Processor and 8 GB RAM.
\subsection{Image restoration}\label{s.imag}
Image reconstruction, also called image restoration, is one of the classical linear inverse problems dating back to the 1960s, see for example \cite{AndH,BerB}. The goal is to reconstruct images from some observations. Let $y \in \mathcal{U}$ be a noisy indirect observation of an image $x \in \mathcal{X}$ with an unknown noise $\delta \in \mathcal{U}$. Suppose that $\mathcal{A}:\mathcal{X} \rightarrow \mathcal{U}$ is a linear operator. To recover the unknown vector $x$, we use the linear inverse model (\ref{e.inv2}). As discussed, according to the ill-conditioned or singular nature of the problem, some regularizations are needed. Recall (\ref{e.tikh}), (\ref{e.BPD}) or the more general model (\ref{e.genf1}) to derive a suitable solution for the corresponding inverse problem and refer to \cite{KauN1,KauN2,NeuI} for more regularization options. The strategy employed for solving such penalized optimization problems depends on features of objective functions and regularization terms like differentiability or nondifferentiability and convexity or nonconvexity. The quality of the reconstructed image highly depends on these features. A typical model for image reconstruction consists of a smooth objective function penalized by smooth or nonsmooth regularization penalties. These penalties are selected based on expected features of the recovered image. 

Suppose that the known image $x \in \mathcal{X}$ is represented by $x = \mathcal{W} \tau$ in some basis domain, e.g., wavelet, curvelet, or time-frequency domains, where the operator $\mathcal{W}:\mathcal{U} \rightarrow \mathcal{V}$ can be orthogonal or any dictionary. 
There are two main strategies for restoring the image $x$ from an indirect observation $y$, namely {\bf \emph{synthesis}} and {\bf \emph{analysis}}. To recover a clean image, the synthesis approach reconstructs the image by solving the following minimization problem
\begin{equation}\label{e.syn}
\min_{\tau \in \mathcal{U}} ~~ \D \frac{1}{2} \|\mathcal{A}\mathcal{W} \tau - y\|_2^2 + \lambda \varphi(\tau),\\
\end{equation}
where $\varphi$ is a convex regularizer. If $\tau^*$ is an optimizer of (\ref{e.syn}), then an approximation of the clean image $x$ will be reconstructed using $x^* = \mathcal{W} \tau^*$. Alternatively, the analysis strategy recovers the image by solving 
\begin{equation}\label{e.ana}
\min_{x \in \mathcal{X}} ~~ \D \frac{1}{2} \|\mathcal{A}(x) - y\|_2^2 + \lambda \varphi(x).
\end{equation}
Similarity of the problems (\ref{e.syn}) and (\ref{e.ana}) suggests that they are strongly related. These two approaches are equivalent in some assumptions, while variant in others, for more details see \cite{ElaMR}. Although various regularizers like $l_p$-norm are popular in image restoration, it is arguable that the best known and frequently employed regularizer in analysis approaches for image restoration is {\bf \emph{total variation}} (TV), which will be discussed later in this section. In the sequel, we report our numerical experiments based on the analysis strategy.

The pioneering work on total variation as a regularizer for image denoising and restoration was proposed by {\sc Rudin}, {\sc Osher}, and {\sc Fatemi} in \cite{OshRF}. Total variation regularizations are able to restore discontinuities of images and recover the edges. This makes TV appealing and widely used in applications. TV is originally defined in the infinite-dimensional Hilbert space, however for the digital image processing it is interested to consider it in a finite-dimensional space like $\Rz^n$ of pixel values on a two-dimensional lattice. In practice, some discrete versions of TV are favoured to be used. Two standard choices of discrete TV-based regularizers, namely {\bf \emph{isotropic total variation}} and {\bf \emph{anisotropic total variation}}, are popular in signal and image processing, defined by
\[
    \bary{lll}
    \|X\|_{ITV} &=& \sum_i^{m-1} \sum_j^{n-1} \sqrt{(X_{i+1,j} - X_{i,j})^2+(X_{i,j+1} - X_{i,j})^2 }\\
            &+& \sum_i^{m-1} |X_{i+1,n} - X_{i,n}| + \sum_i^{n-1} |X_{m,j+1} - X_{m,j}|,
    \eary
\]
and
    \[
    \bary{lll}
    \|X\|_{ATV} &=& \sum_i^{m-1} \sum_j^{n-1} \{|X_{i+1,j} - X_{i,j}| + |X_{i,j+1} - X_{i,j}| \}\\
            &+& \sum_i^{m-1} |X_{i+1,n} - X_{i,n}| + \sum_i^{n-1} |X_{m,j+1} - X_{m,j}|,
    \eary
\]
for $X \in \Rz^{m\times n}$, respectively. The image restoration problem with the discrete TV-based regularization can be formulated by
\begin{equation}\label{e.TV}
\min_{X \in \Rz^{m \times n}} ~~ \D \frac{1}{2} \| \mathcal{A}(X) - Y\|_F^2 + \lambda~ \varphi(X),\\
\end{equation}
where $\mathcal{A}: \Rz^{m \times n} \rightarrow \Rz^{r \times m}$ is a linear operator, $\varphi = \|.\|_{ITV}$ or $\varphi = \|.\|_{ATV}$ and $\|X\|_F = \sqrt{\mathrm{Tr}~ {XX^T}}$ is the Frobenius norm in which $\mathrm{Tr}$ stands for the trace of a matrix. 

In what follows, we consider some prominent classes of imaging problems, more specifically denoising, inpainting and deblurring, and conduct our tests on numerous images. \\

\subsubsection{Denoising with total variation}\label{s.den}   
Image denoising is one of the most fundamental image restoration problems aiming to remove noise from images, in which the noise comes from some resources like sensor imperfection, poor illumination, or communication errors. While this task itself is a prominent imaging problem, it is important as a component in other processes like deblurring with the proximal mapping. The main objective is to denoise images while the edges  are preserved. To do so, various models and ideas have been proposed. Among all of the regularizers, total variation proposed a model preserving discontinuities (edges). 

The particular case of (\ref{e.TV}) when $\mathcal{A}$ is the identity operator, is called {\bf \emph{denoising}} and is very popular with total variation regularizers. Although the original total variation introduced in \cite{OshRF} is an infinite-dimensional continuous constrained problem, we here consider the following unconstrained discrete finite-dimensional version 
\lbeq{e.pro}
X_{Y,\lambda} = \argmin_{X \in \Rz^{m \times n}} \frac{1}{2} \|X - Y\|_F^2 + \lambda~ \varphi(X),
\eeq
where $\varphi = \|.\|_{ITV}$ or $\varphi = \|.\|_{ATV}$. It is clear that both $\|.\|_{ITV}$ and $\|.\|_{ATV}$ are nonsmooth semi-norms and  the proximal operator (\ref{e.pro}) cannot be solved explicitly for a given $Y$. Rather than solving the problem (\ref{e.pro}) explicitly, it has been approximately solved by iterative processes similar to that proposed by {\sc Chambolle} in \cite{Cha}. We note that $\|.\|_{ITV}$ and $\|.\|_{ATV}$ are nonsmooth and convex, and their subdifferentials are available. Therefore, OSGA can be employed for solving (\ref{e.pro}).  

We consider the isotropic version of problem (\ref{e.pro}) where $Y$ denotes a noisy image and $X_{Y,\lambda}$ is denoised approximation of the original $X$ corresponding to a regularization parameter $\lambda$. We run IST, TwIST \cite{BioF} and FISTA \cite{BecT2,BecT3} in order to compare their performances with OSGA's results. For IST, TwIST and FISTA, we use original codes and only adding more stopping criteria are needed in our comparisons. We set $\lambda = 0.05$ for all algorithms. As IST, TwIST and FISTA need to solve their subproblems iteratively, we consider three different cases by limiting the number of internal iterations to $chit = 5$, $10$ and $20$ iterations. Let us consider the recovery of the $1024 \times 1024$ Pirate image by IST, TwIST, FISTA and OSGA to verify the results visually and in more details. The noisy image is generated by using the MATLAB internal function $\mathtt{awgn}$ with $\mathrm{SNR} = 15$, and the algorithms are stopped after 50 iterations. The results are summarized in Figures 1 and 2. 

To see details of this experiment, we compare the results of implementations in different measures in Figure 2. The performances are measured by some relative errors for points and function values and also so-called signal-to-noise improvement (ISNR). Here, ISNR and PSNR are defined by
\lbeq{e.isnr}
\mathrm{ISNR} = 20 \log_{10} \left( \frac{\|Y - X_0\|_F}{\|X - X_0\|_F} \right)~~~ \mathrm{and} ~~~ \mathrm{PSNR} = 20 \log_{10} \left( \frac{\sqrt{mn}}{\|X - X_0\|_F} \right),
\eeq
where $X_0$ denotes the $m \times n$ clean image, $Y$ is the observed image and pixel values are in $[0, 1]$. Generally, these ratios measure the quality of the restored image $X$ relative to the blurred or noisy observation $Y$. From Figure 2, it is observed that OSGA outperforms IST, TwIST and FISTA with respect to all considered measures. Figure 2, shows that the denoised image looks good for all considered algorithms, where the best function value and the second best PSNR are attained by OSGA requiring 25.46 seconds to be stopped. However, it can be seen that by increasing the number of Chambolle's iterations, the running time is dramatically increased for IST, TwIST and FISTA, however, a better function value or PSNR (compared with OSGA) is not attained.

\begin{figure}[p]\label{f.den1}
\centering
\subfloat[][$rel.~ 1~ vs.~ iterations~ (chit = 5)$]{\includegraphics[width=4.9cm]{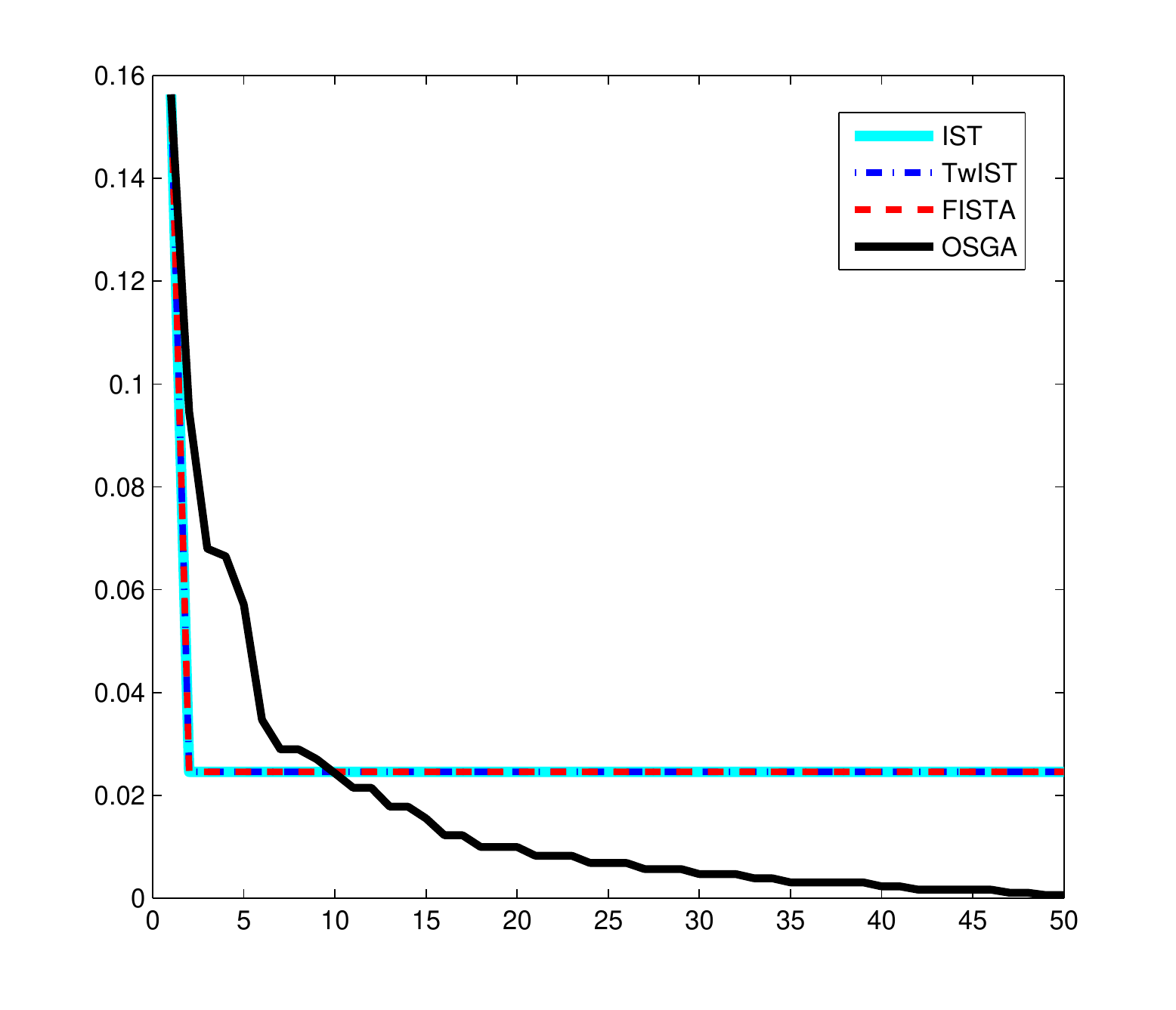}}%
\qquad
\subfloat[][$rel.~ 1~ vs.~ iterations~ (chit = 10)$]{\includegraphics[width=4.9cm]{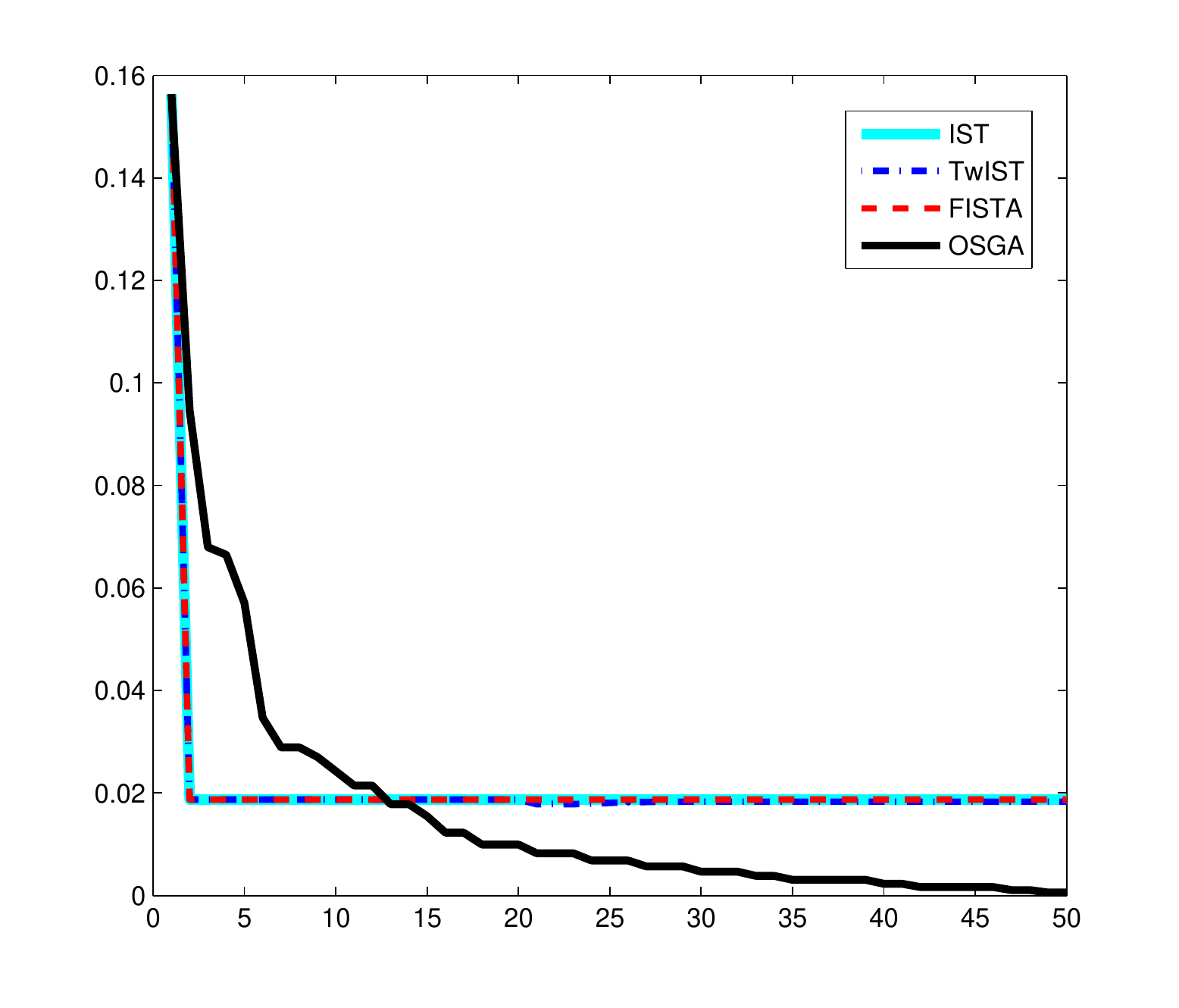}}
\qquad
\subfloat[][$rel.~ 1~ vs.~ iterations~ (chit = 20)$]{\includegraphics[width=4.9cm]{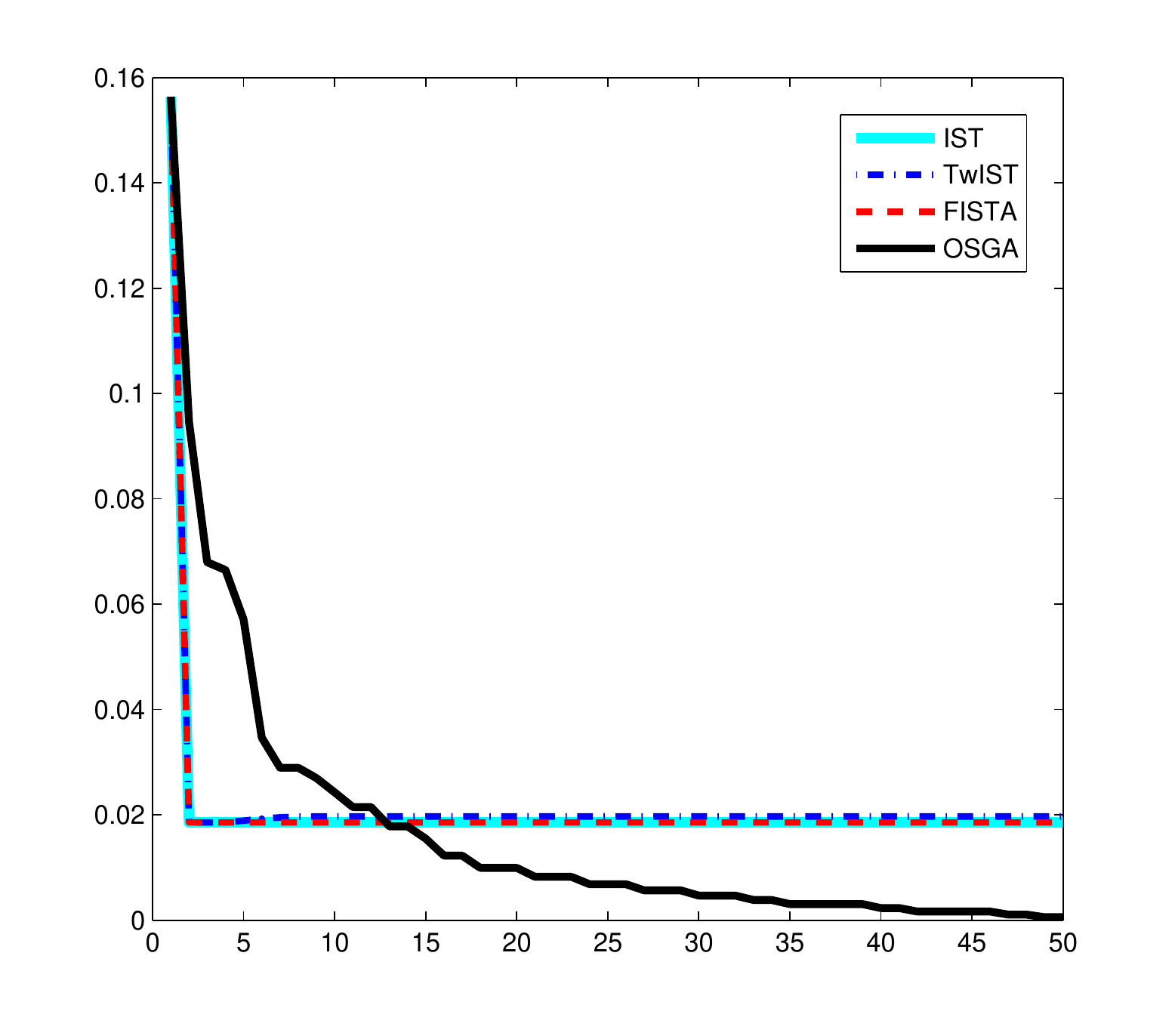}}%
\qquad
\subfloat[][$rel.~ 2~ vs.~ time~ (chit = 5)$]{\includegraphics[width=4.9cm]{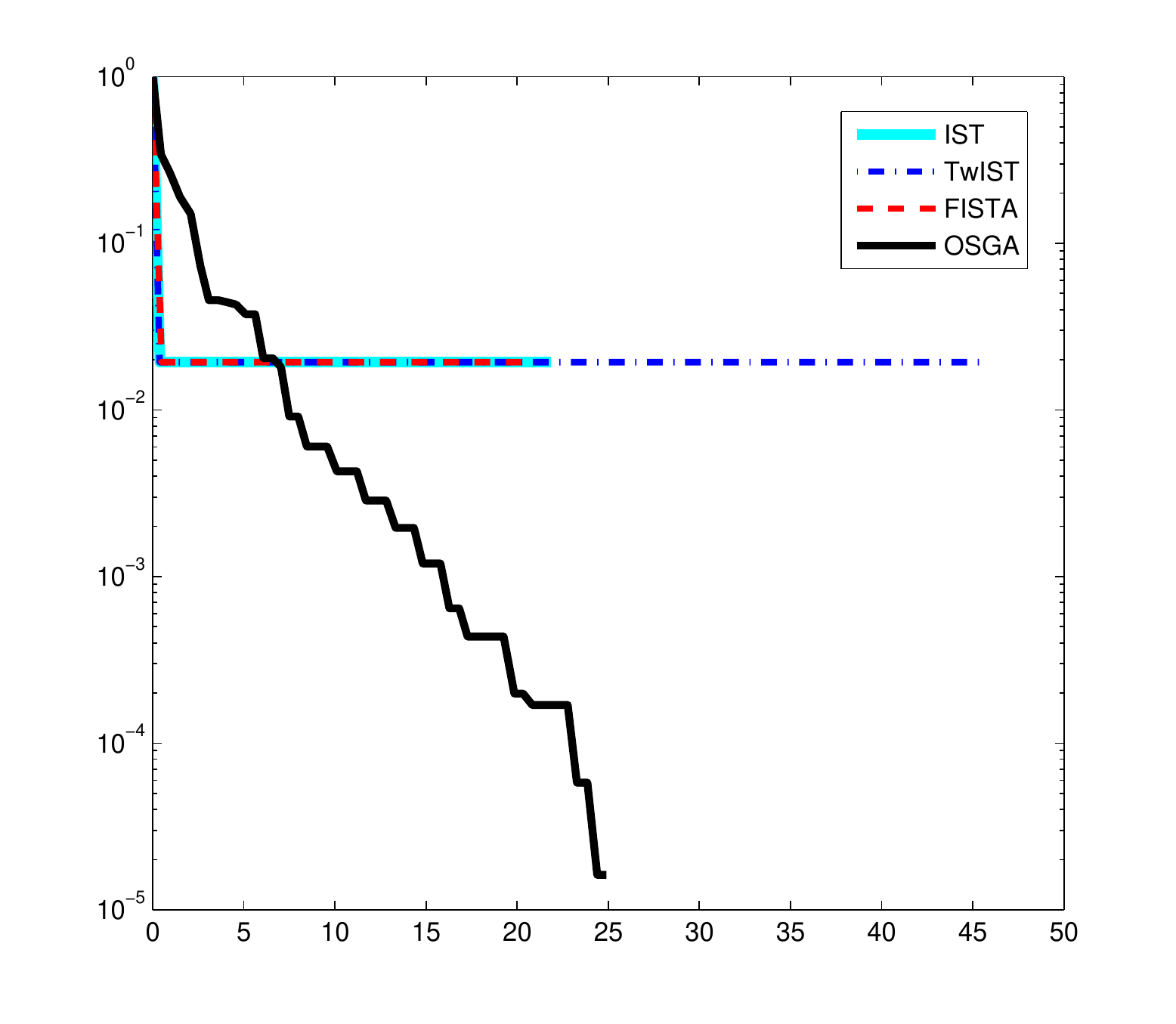}}
\qquad
\subfloat[][$rel.~ 2~ vs.~ time~ (chit = 10)$]{\includegraphics[width=4.9cm]{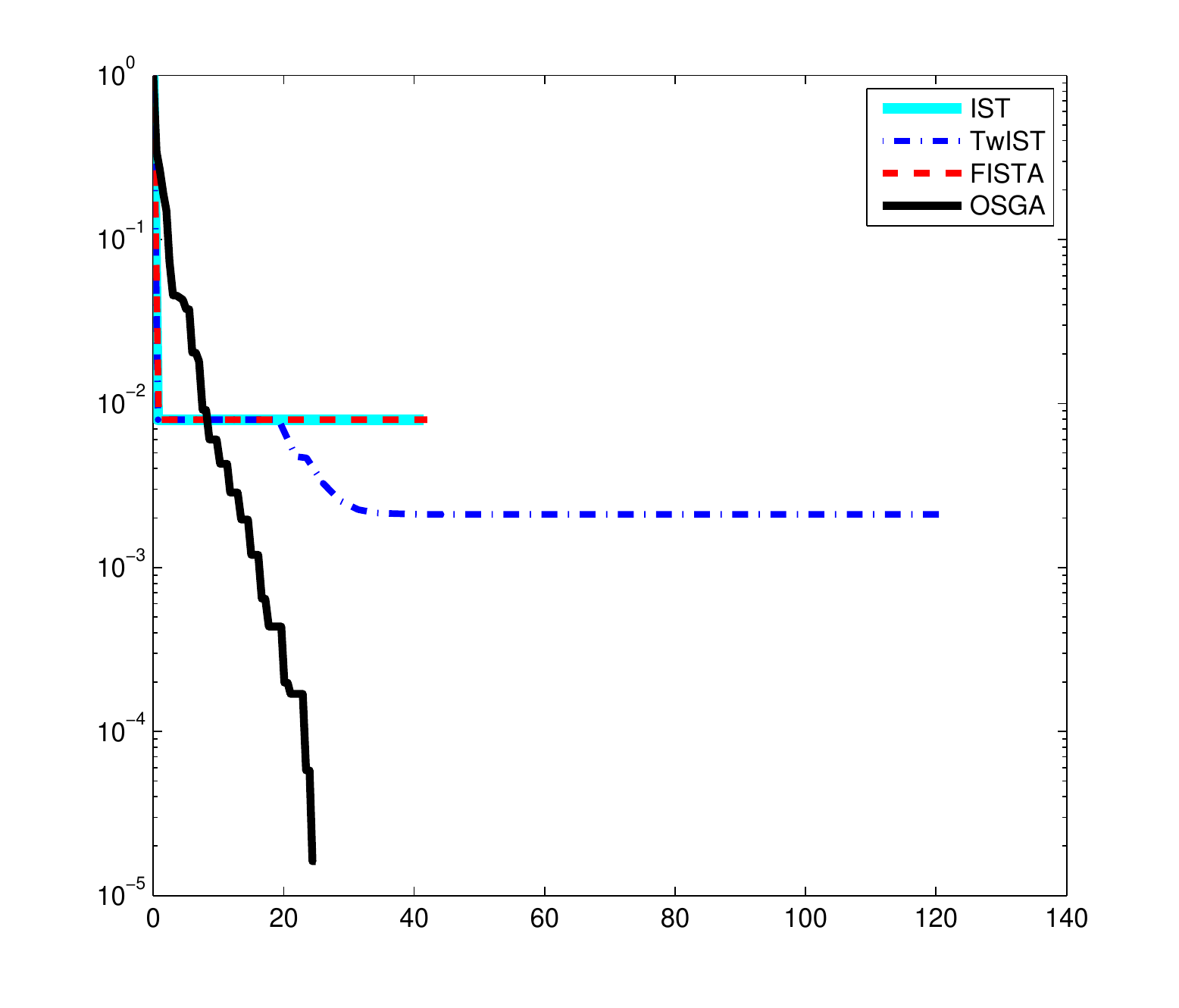}}%
\qquad
\subfloat[][$rel.~ 2~ vs.~ time~ (chit = 20)$]{\includegraphics[width=4.9cm]{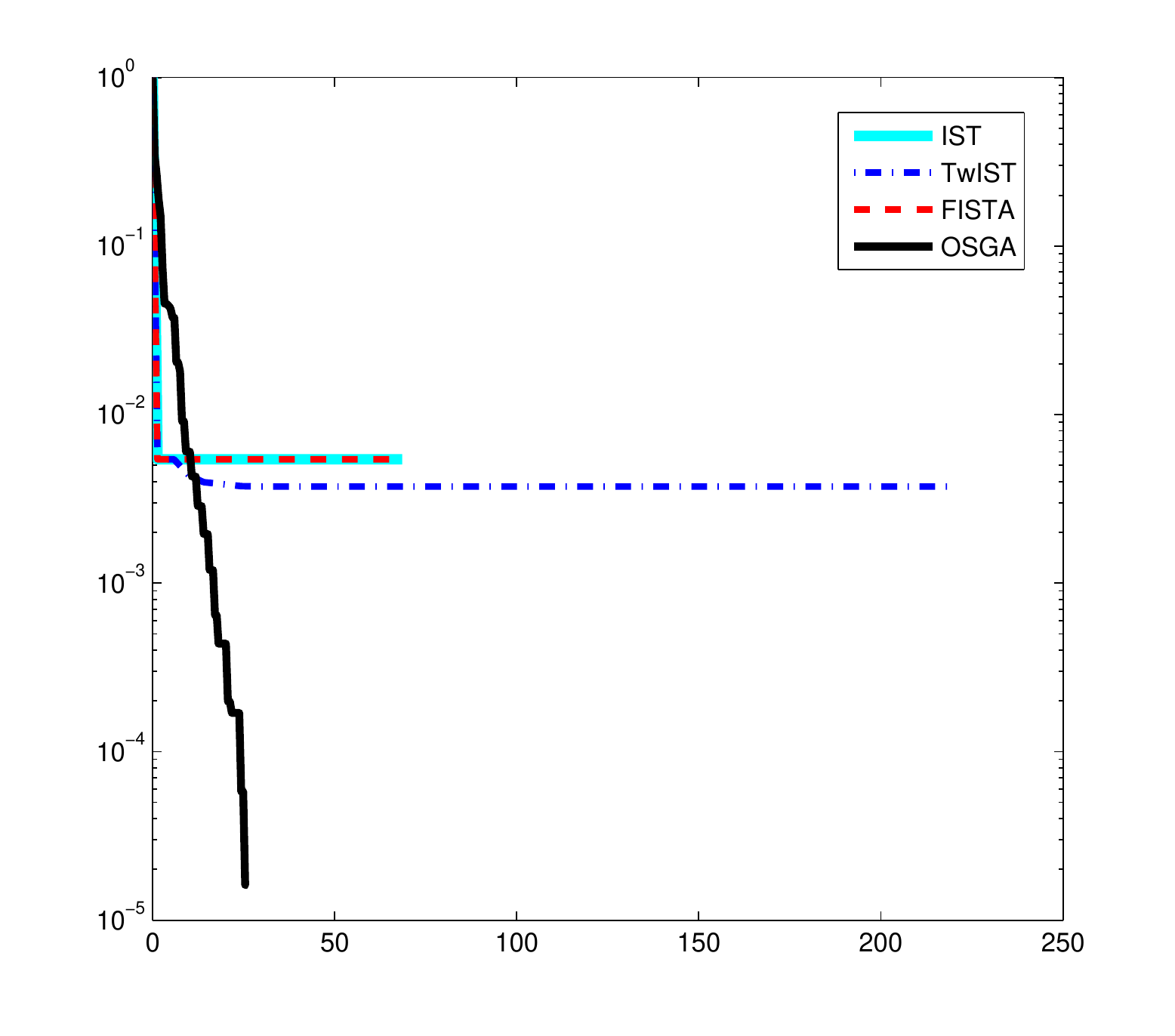}}
\qquad
\subfloat[][$rel.~ 2~ vs.~ iterations~ (chit = 5)$]{\includegraphics[width=4.9cm]{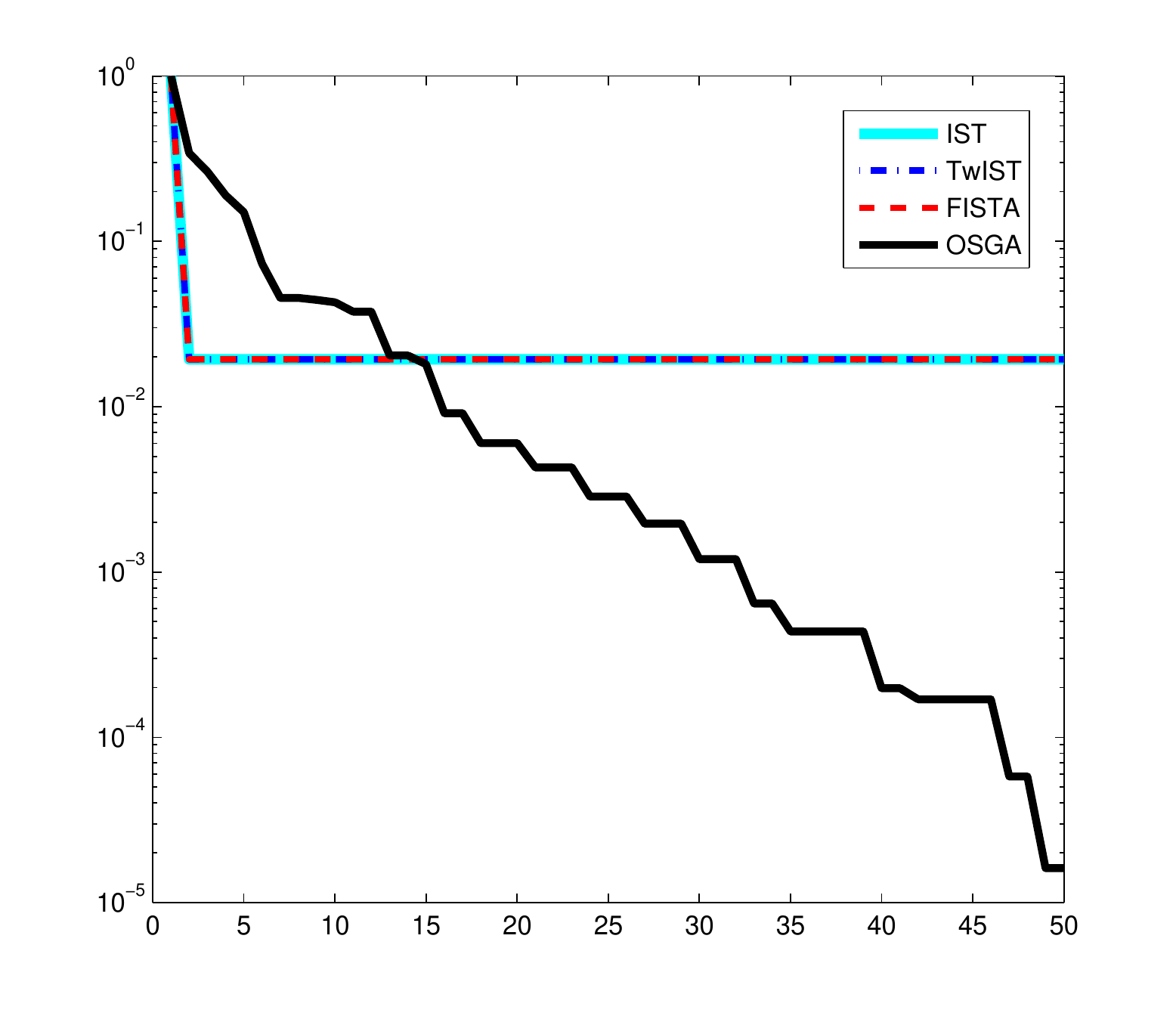}}
\qquad
\subfloat[][$rel.~ 2~ vs.~ iterations~ (chit = 10)$]{\includegraphics[width=4.9cm]{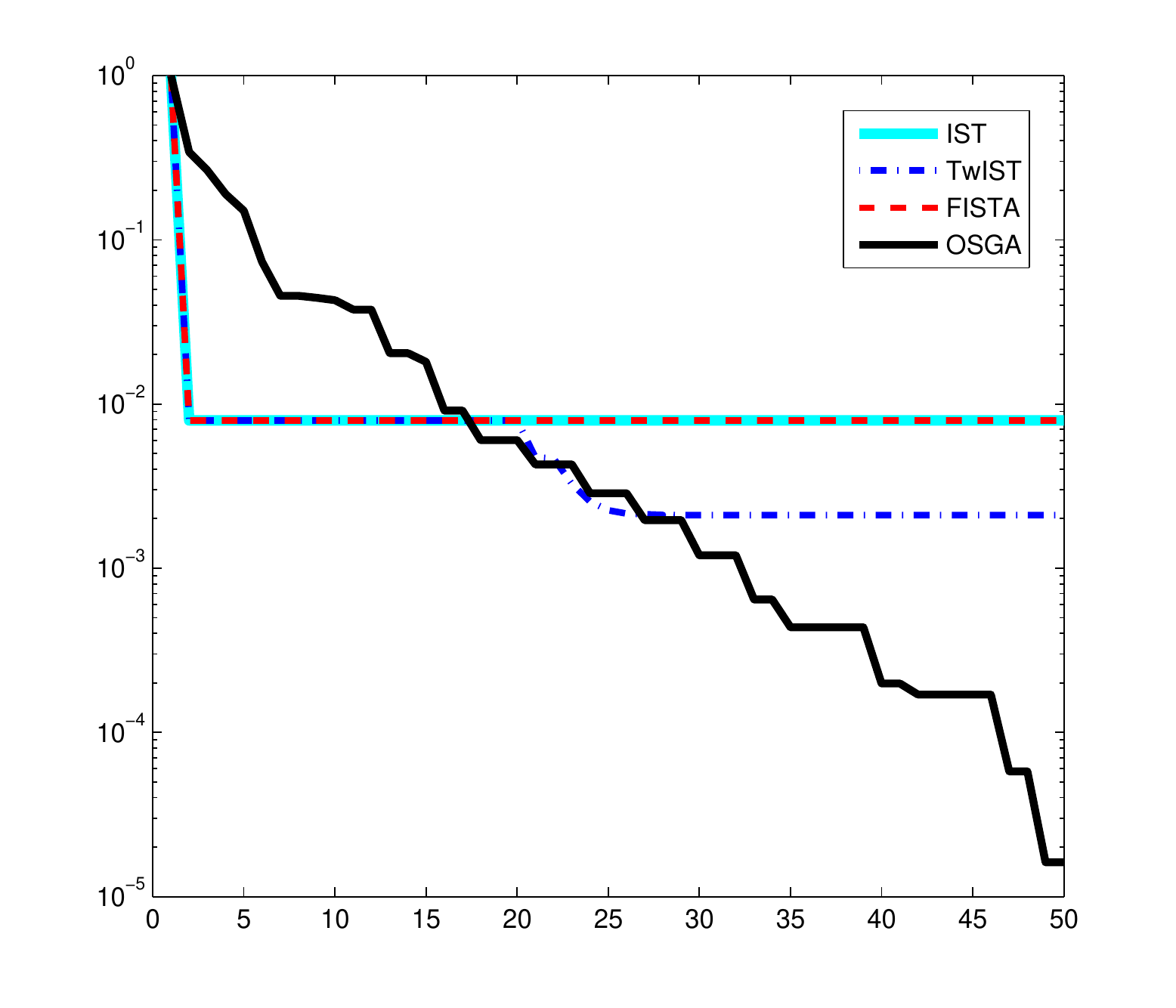}}%
\qquad
\subfloat[][$rel.~ 2~ vs.~ iterations~ (chit = 20)$]{\includegraphics[width=4.9cm]{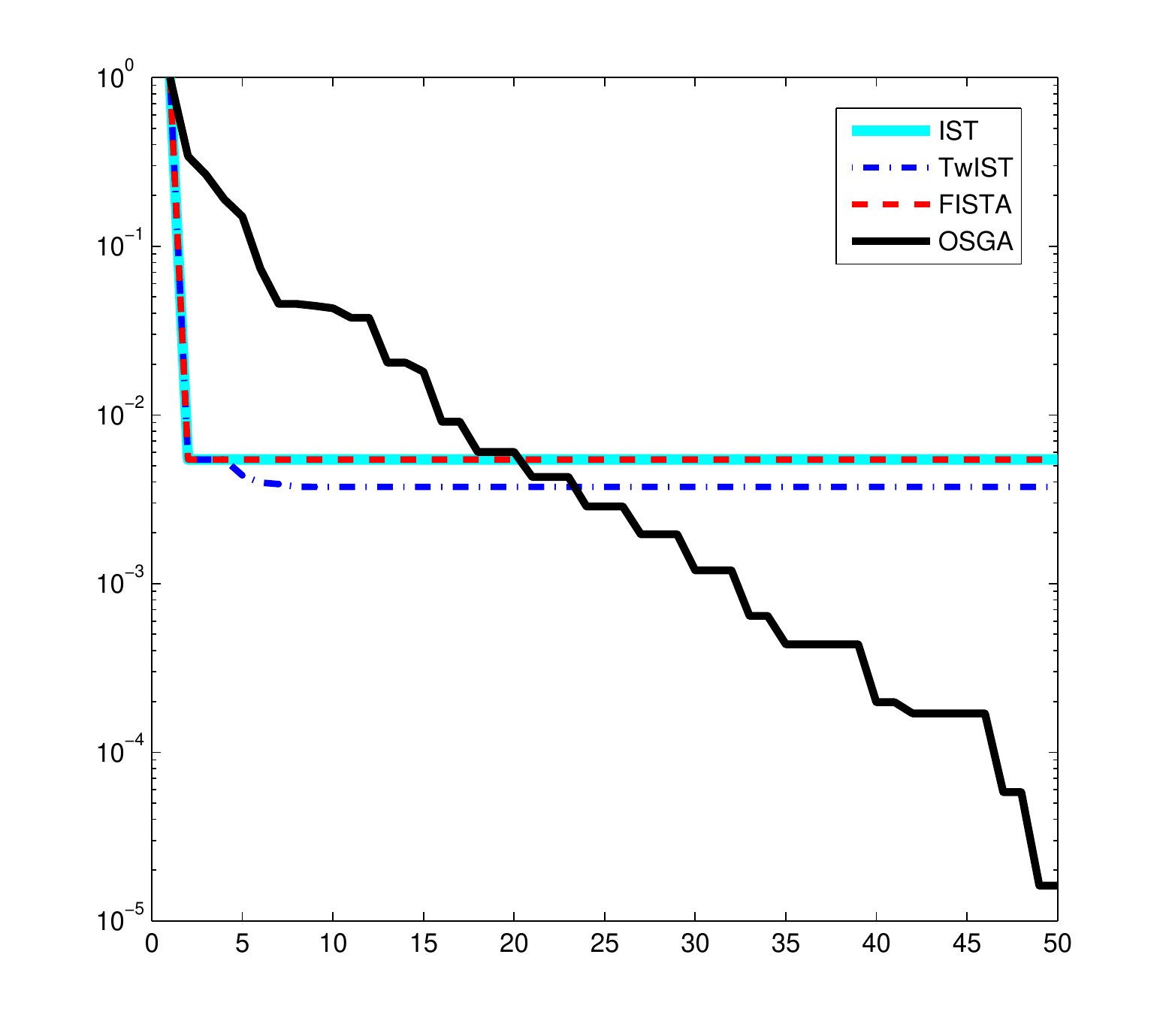}}
\qquad
\subfloat[][$ISNR~ vs.~ iterations~ (chit = 5)$]{\includegraphics[width=4.9cm]{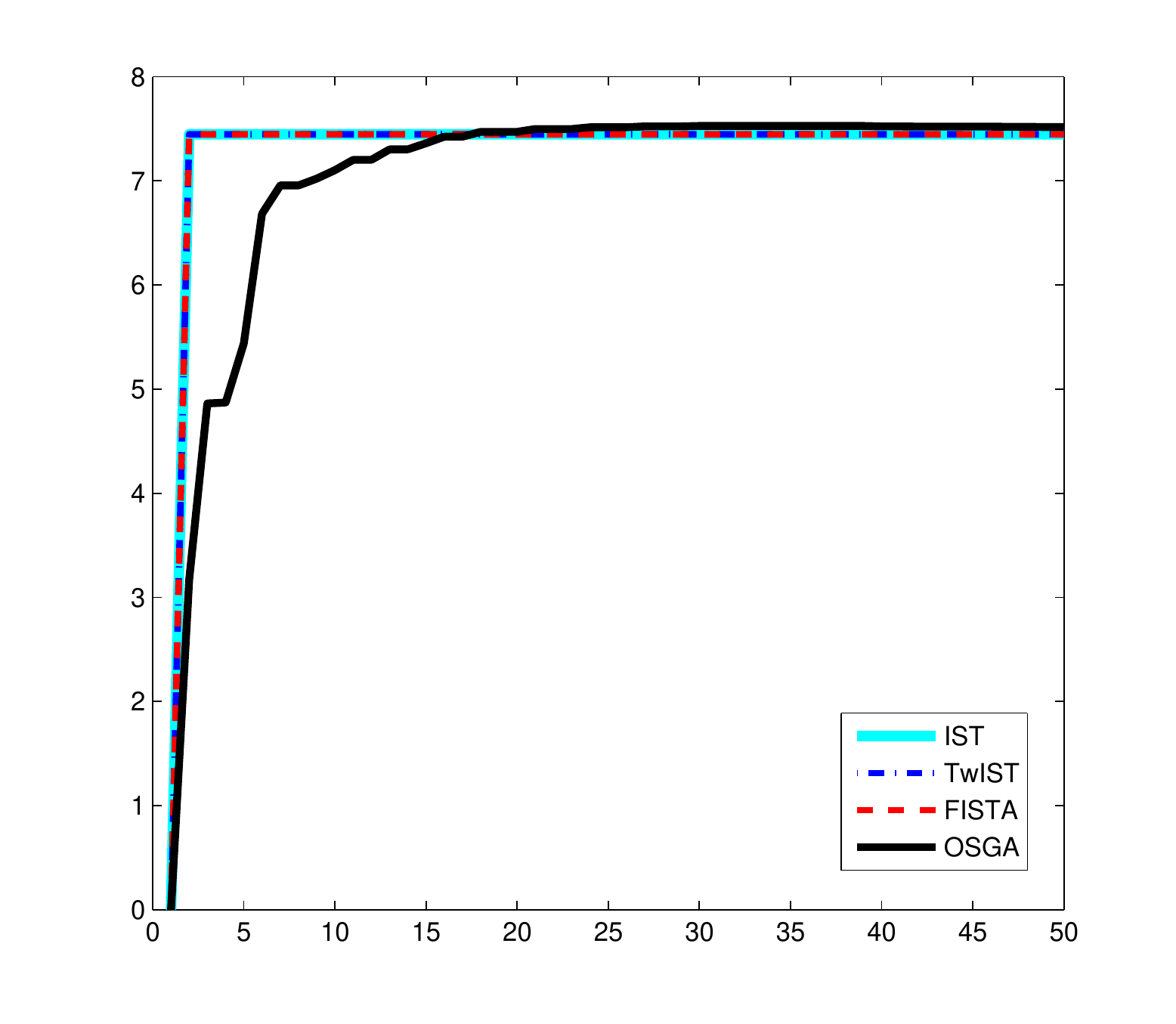}}
\qquad
\subfloat[][$ISNR~ vs.~ iterations~ (chit = 10)$]{\includegraphics[width=4.9cm]{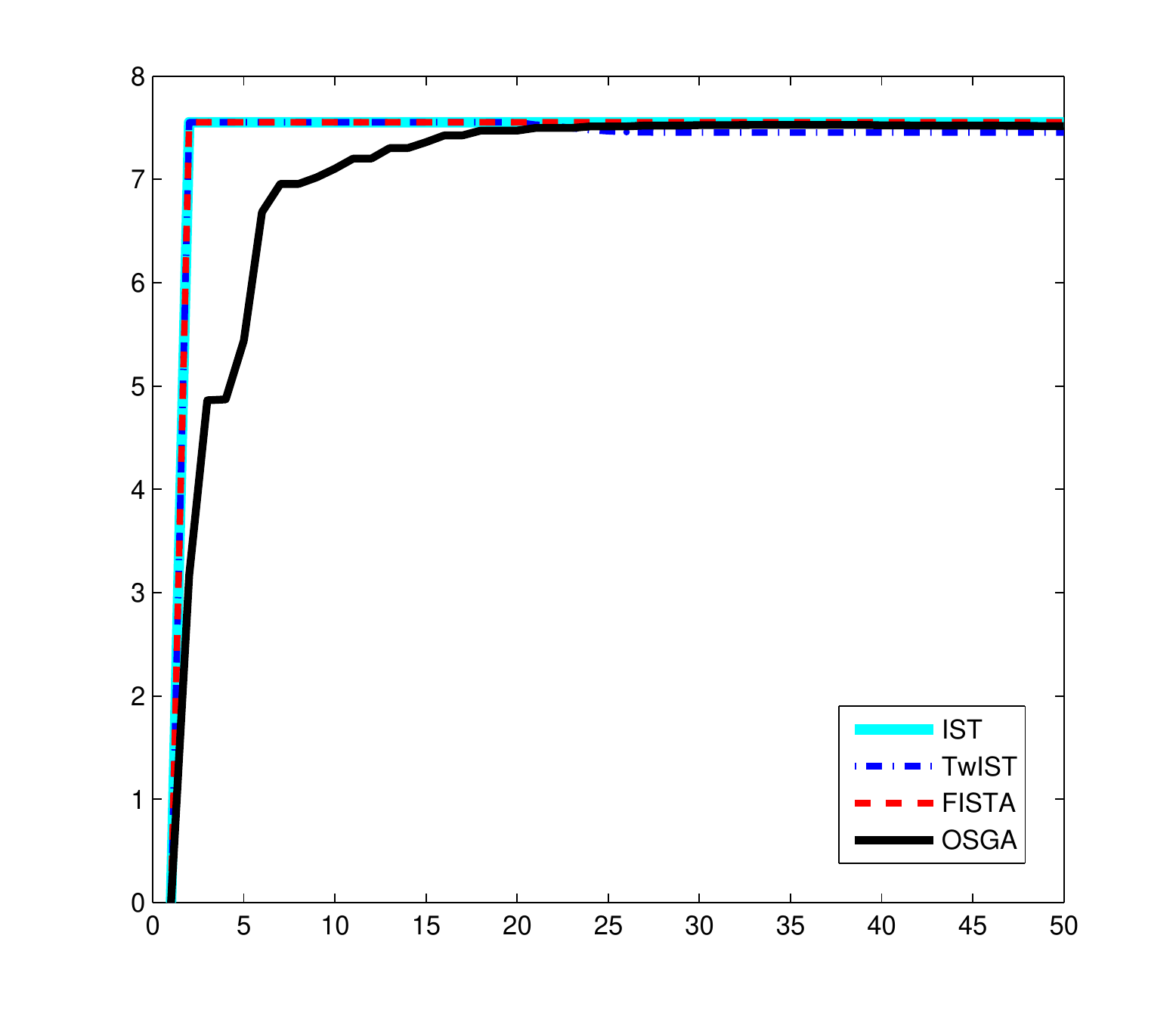}}%
\qquad
\subfloat[][$ISNR~ vs.~ iterations~ (chit = 20)$]{\includegraphics[width=4.9cm]{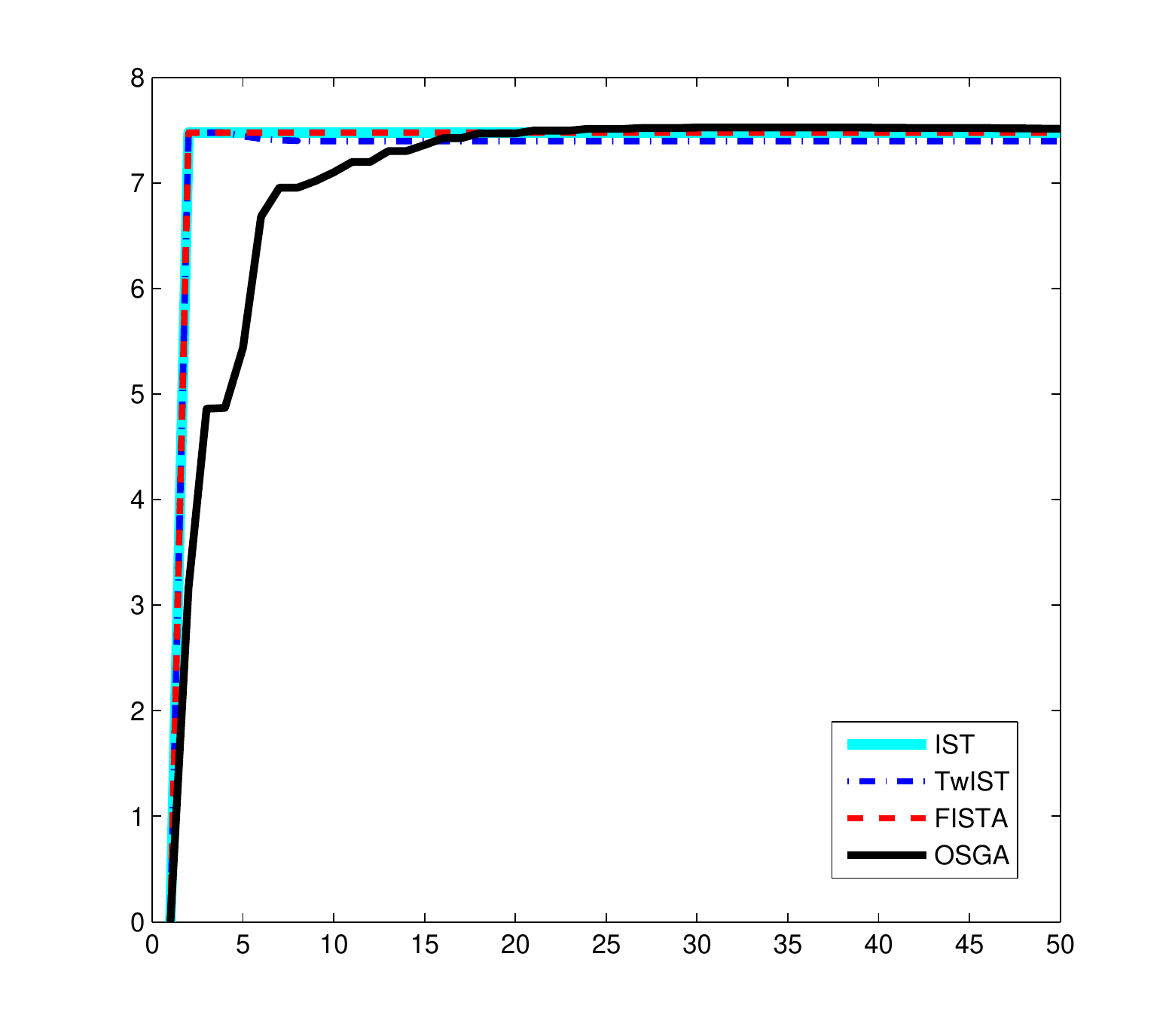}}
\caption{A comparison among IST, TwIST, FISTA and OSGA for denoisinging the $1024 \times 1024$ Pirate image when they stopped after 50 iterations. Each column stands for comparisons with a different number of Chambolle's iterations. While the first row denotes the relative error  $rel.~ 1 := \|x_k - x^*\|_2/\|x^*\|_2 $ of points
versus iterations, the second row shows the relative error $rel.~ 2 := (f_k - f^*)/(f_0 - f^*)$ of function values versus time. The third row illustrates $rel.~ 2$ versus iterations and fourth row stands for ISNR (\ref{e.isnr}) versus iterations.}%
\label{fig:cont}
\end{figure}

\begin{figure}[p]\label{f.den2}
\centering
\subfloat[][Original image]{\includegraphics[width=4.9cm]{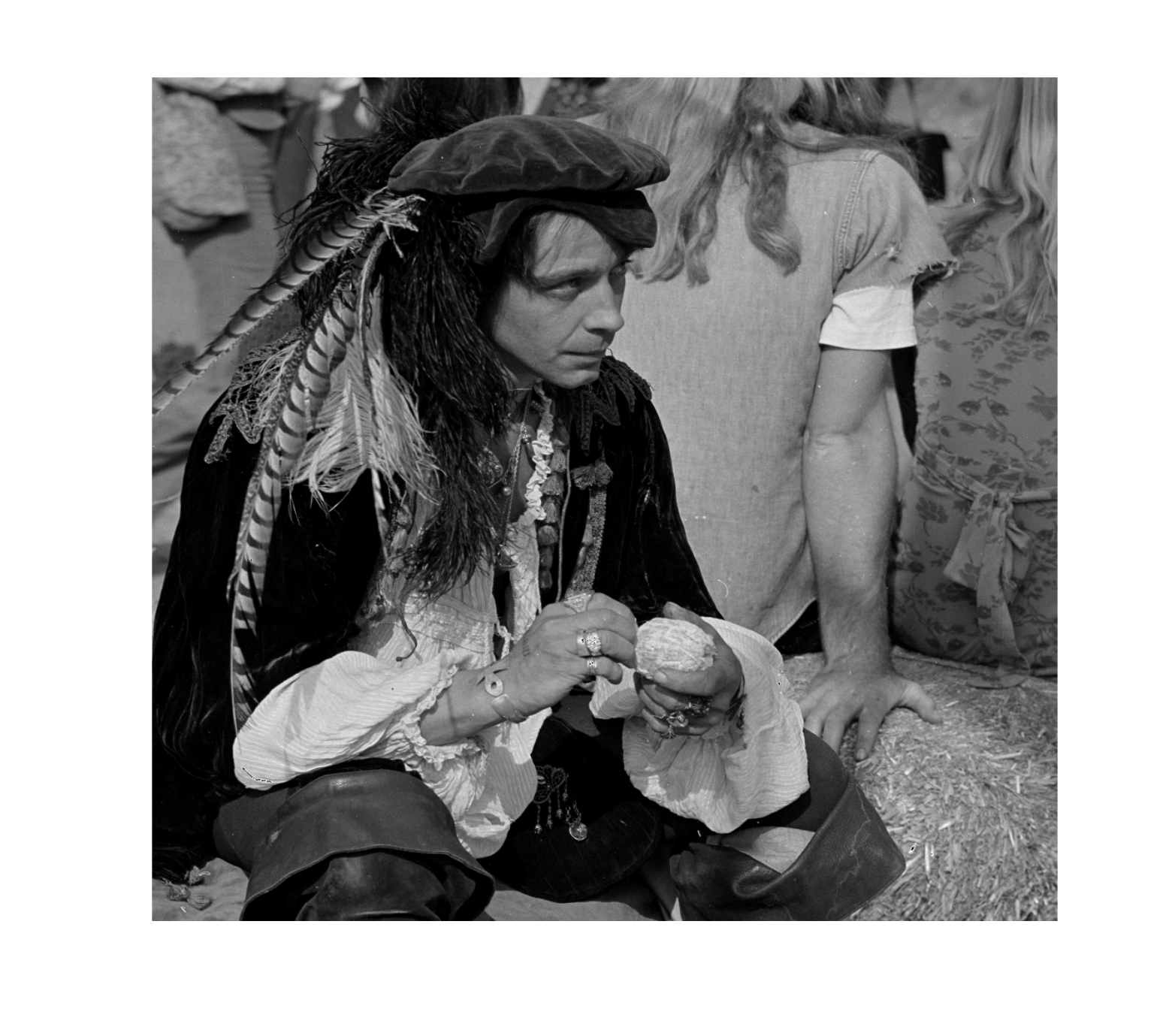}}%
\qquad
\subfloat[][Noisy image]{\includegraphics[width=4.9cm]{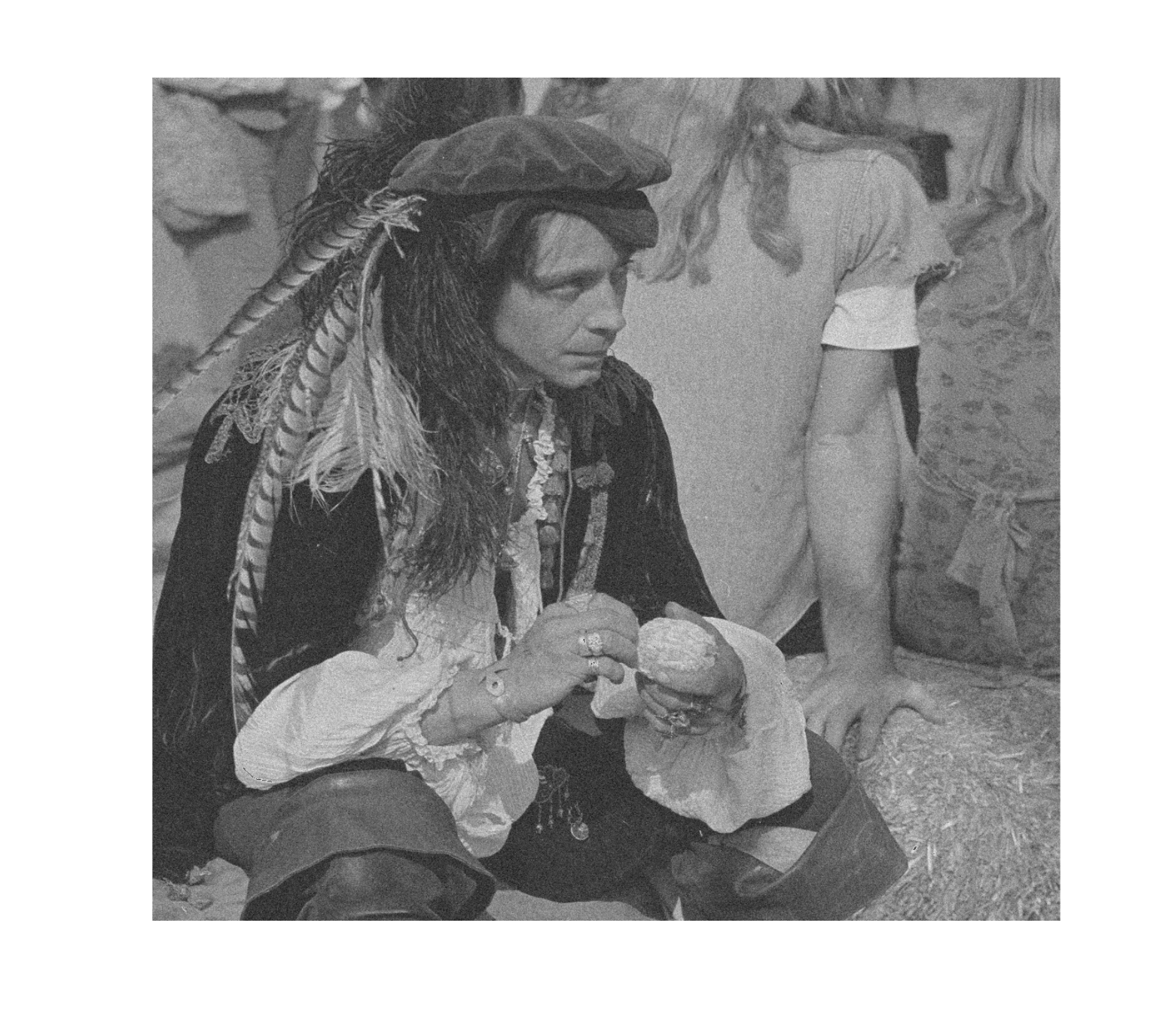}}
\qquad
\subfloat[][OSGA: $\mathrm{F} = 3721.52$, $\mathrm{PSNR} = 30.12$, $\mathrm{T} = 25.46$]{\includegraphics[width=4.9cm]{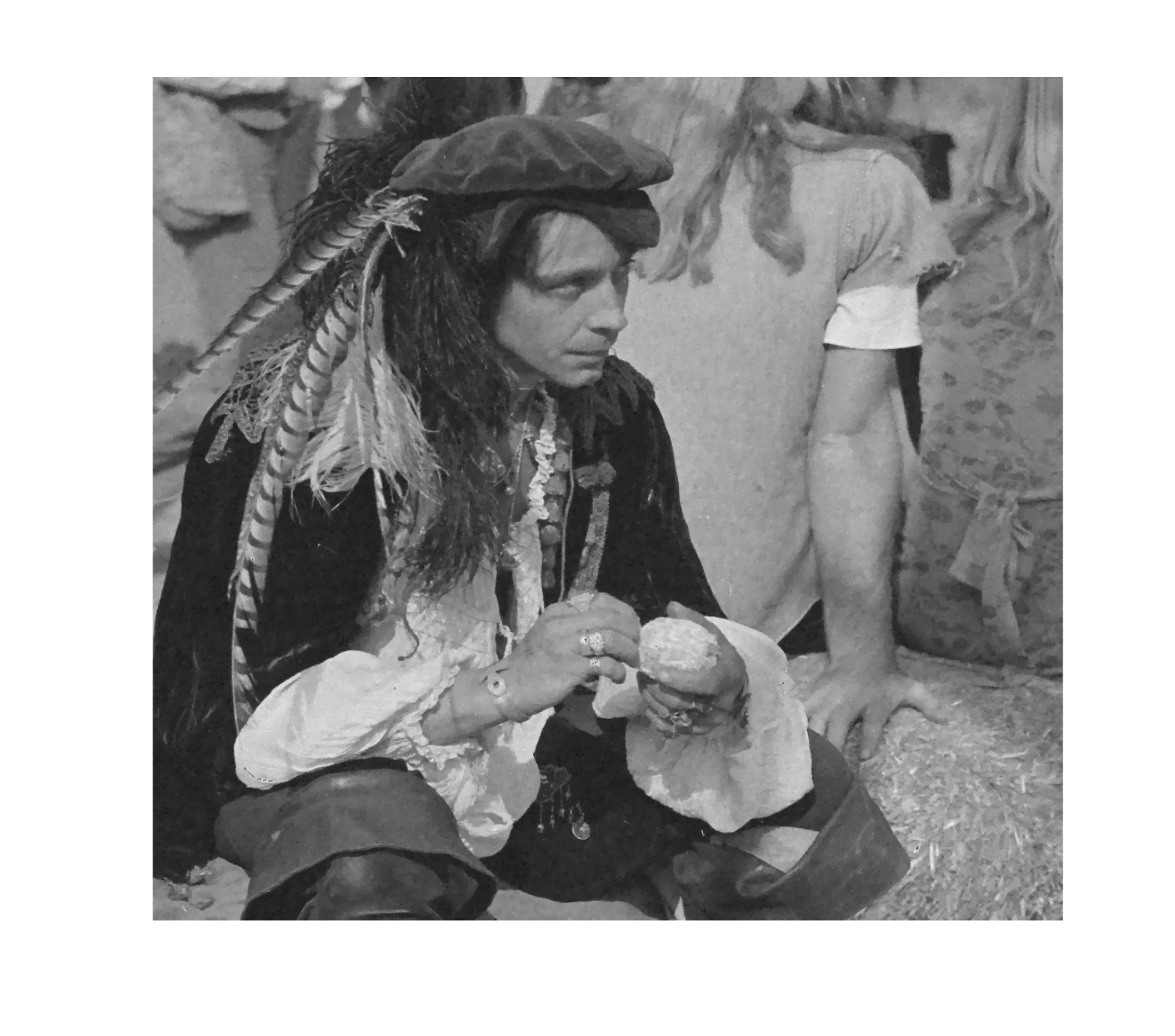}}%
\qquad
\subfloat[][IST~ ($chit = 5$): $\mathrm{F} = 3791.57$, $\mathrm{PSNR} = 30.05$, $\mathrm{T} = 22.76$]{\includegraphics[width=4.9cm]{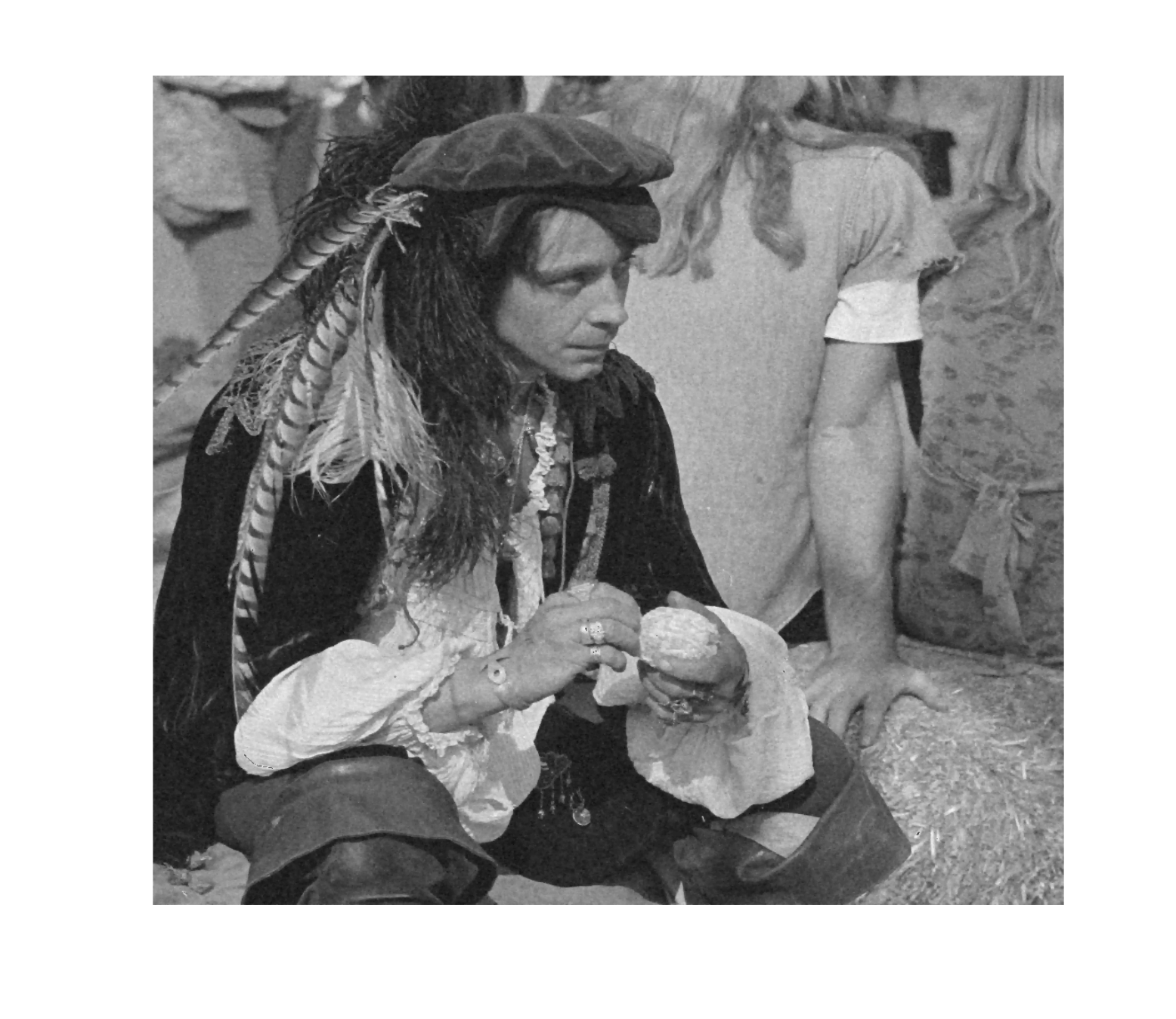}}
\qquad
\subfloat[][IST~ ($chit = 10$): $\mathrm{F} = 3750.28$, $\mathrm{PSNR} = 30.16$, $\mathrm{T} = 43.05$]{\includegraphics[width=4.9cm]{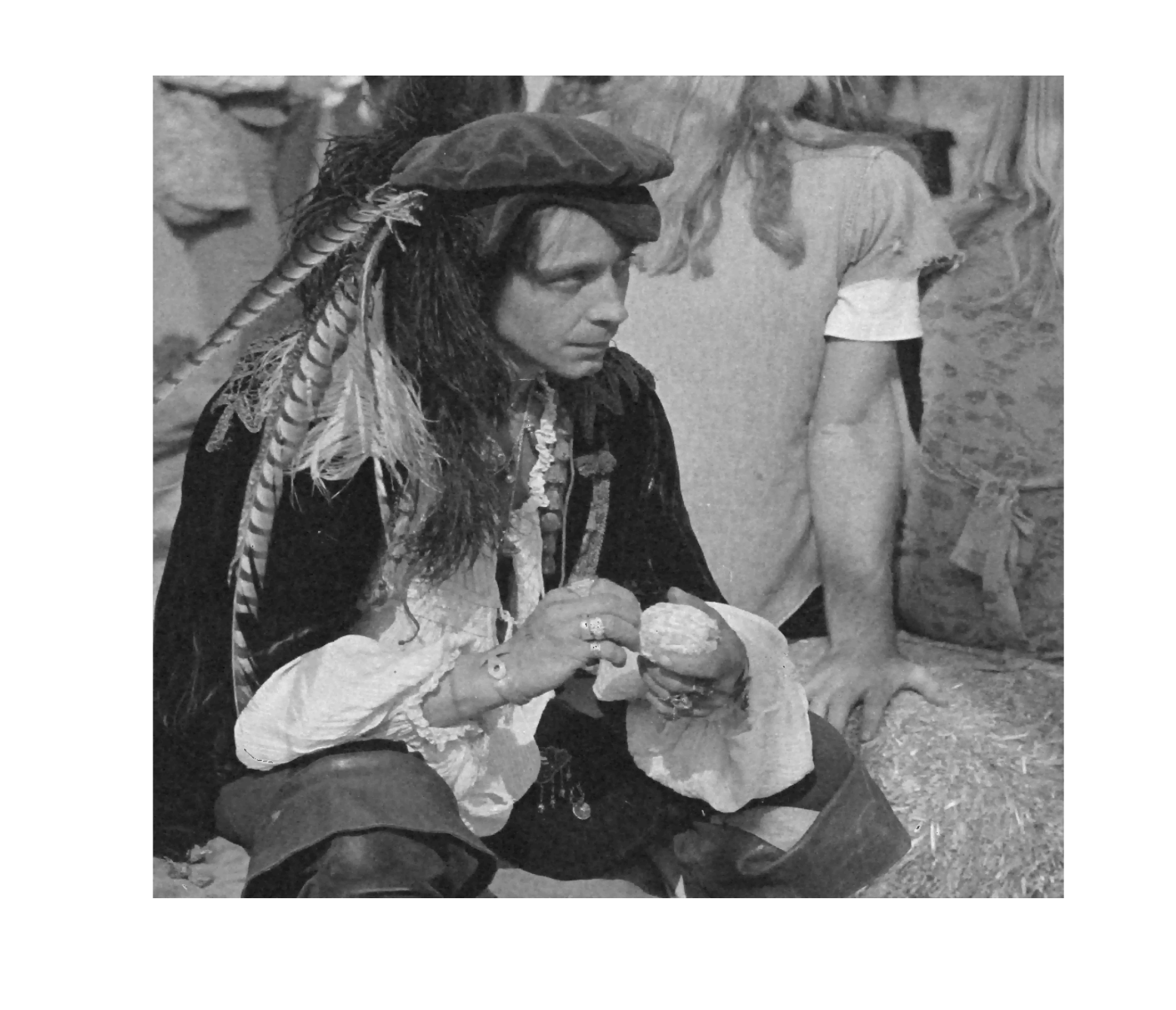}}%
\qquad
\subfloat[][IST~ ($chit = 20$): $\mathrm{F} = 3741.17$, $\mathrm{PSNR} = 30.08$, $\mathrm{T} = 71.42$]{\includegraphics[width=4.9cm]{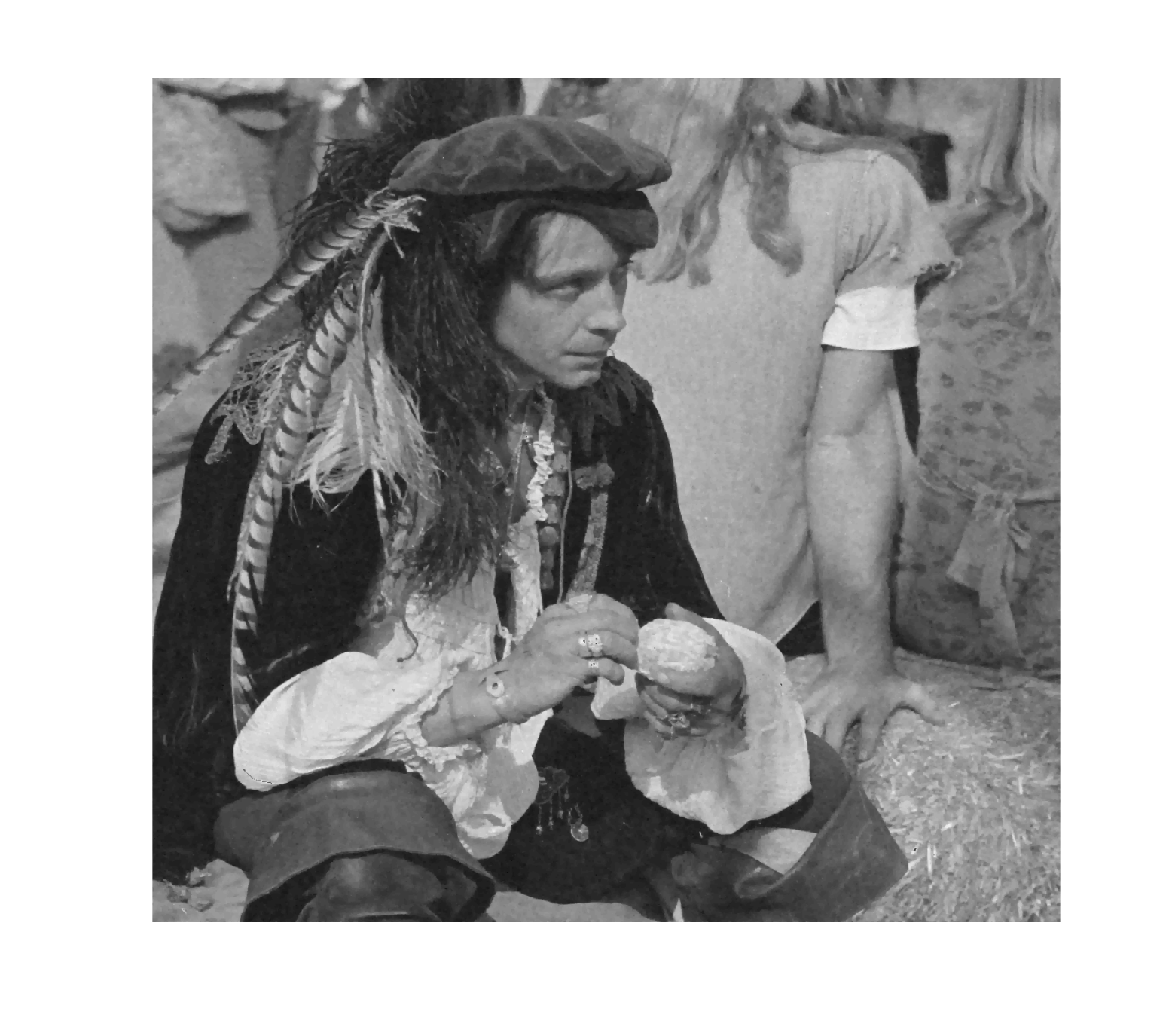}}
\qquad
\subfloat[][TwIST~ ($chit = 5$): $\mathrm{F} = 3791.57$, $\mathrm{PSNR} = 30.05$, $\mathrm{T} = 47.02$]{\includegraphics[width=4.9cm]{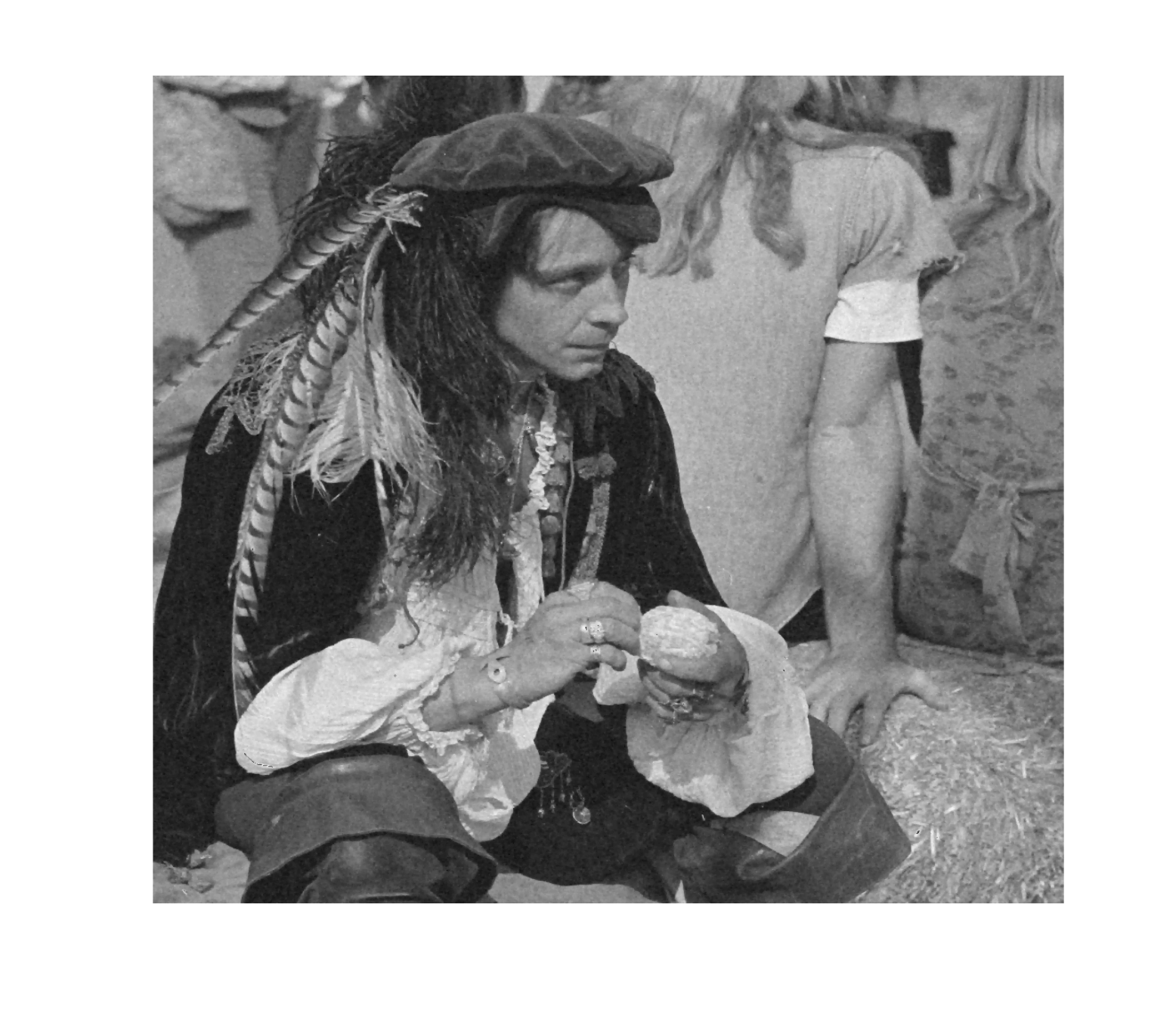}}
\qquad
\subfloat[][TwIST~ ($chit = 10$): $\mathrm{F} = 3729.11$, $\mathrm{PSNR} = 30.06$, $\mathrm{T} = 124.25$]{\includegraphics[width=4.9cm]{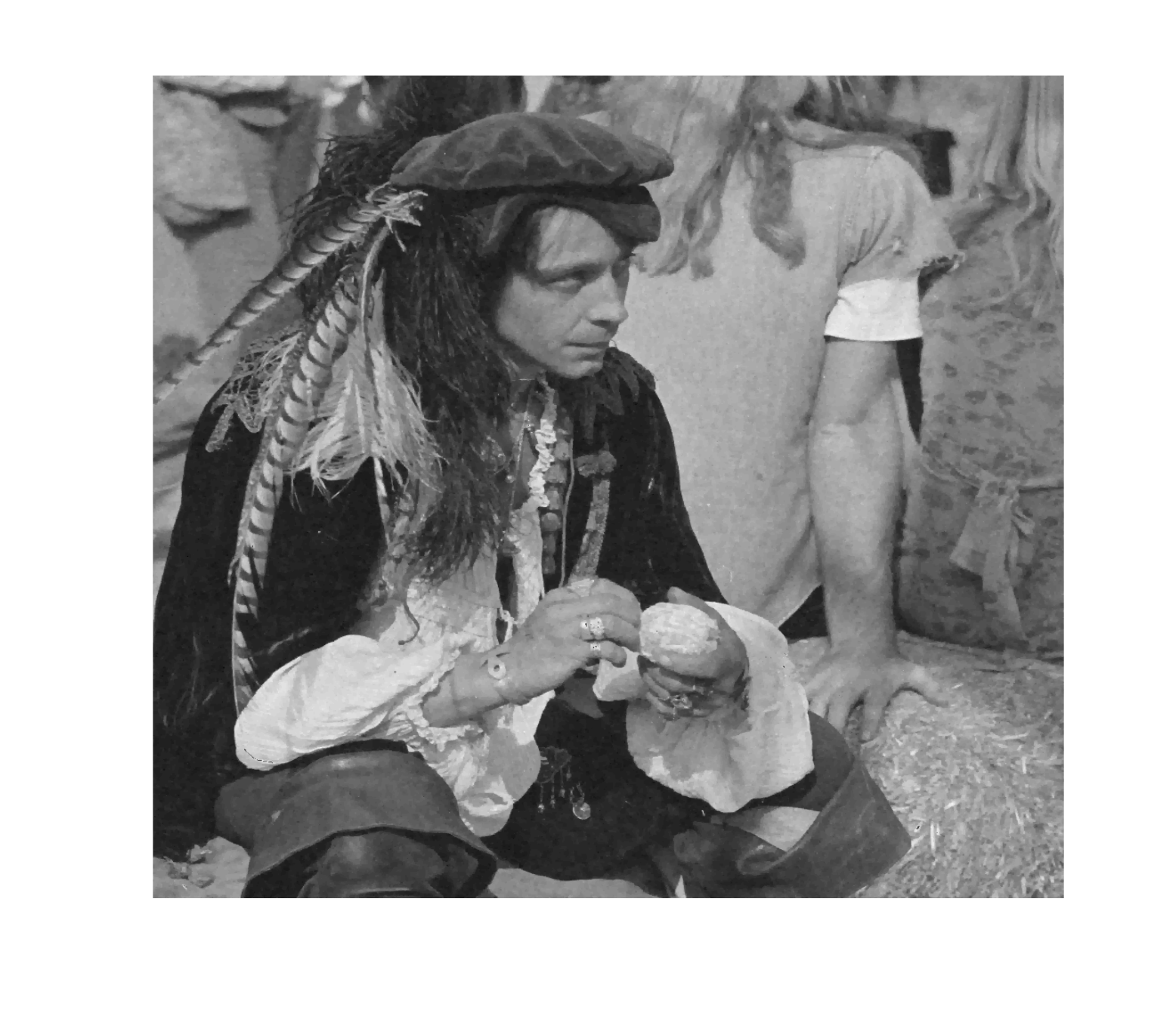}}%
\qquad
\subfloat[][TwIST~ ($chit = 20$): $\mathrm{F} = 3735.04$, $\mathrm{PSNR} = 30.00$, $\mathrm{T} = 224.44$]{\includegraphics[width=4.9cm]{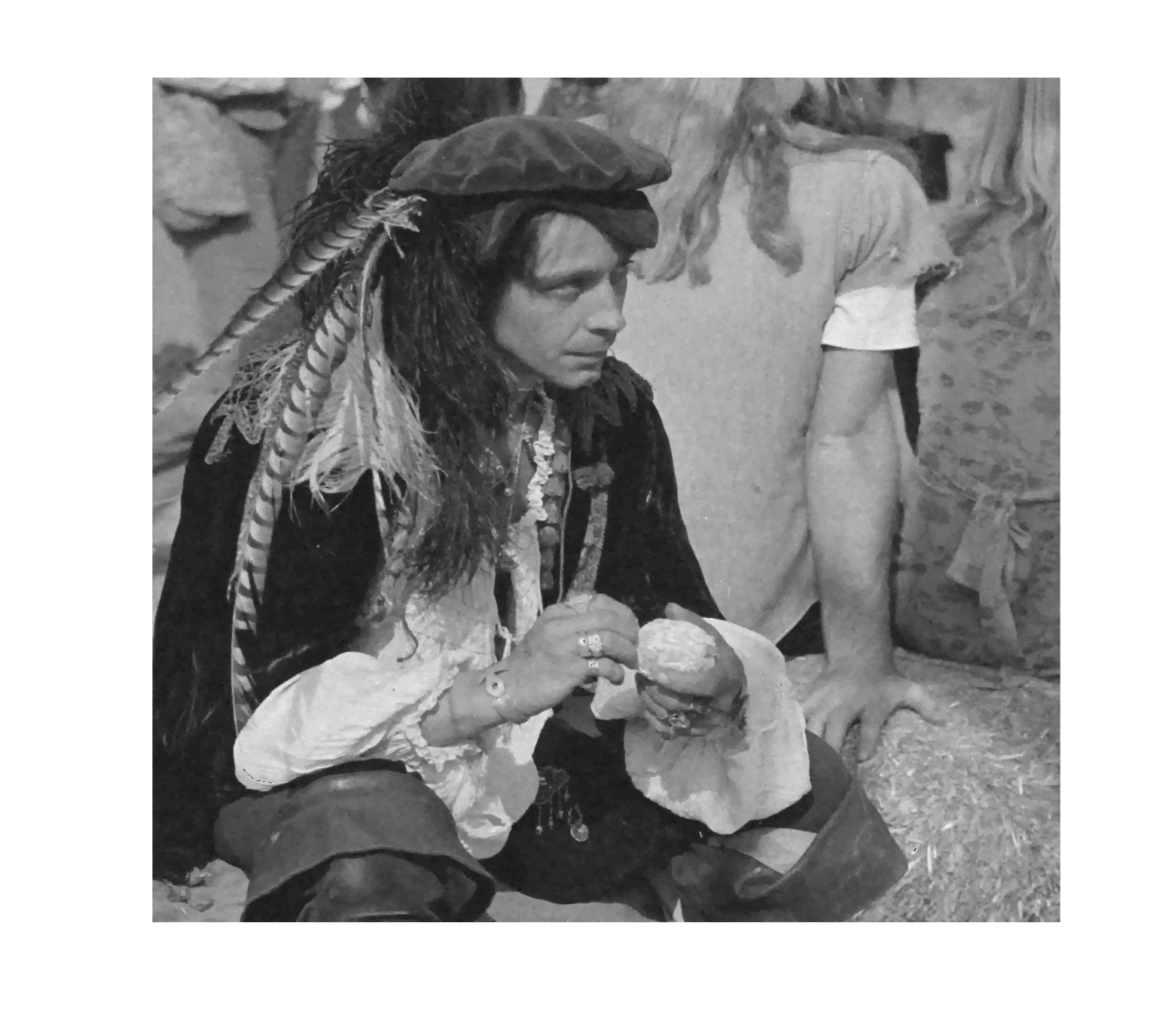}}
\qquad
\subfloat[][FISTA~ ($chit = 5$): $\mathrm{F} = 3791.57$, $\mathrm{PSNR} = 30.05$, $\mathrm{T} = 20.48$]{\includegraphics[width=4.9cm]{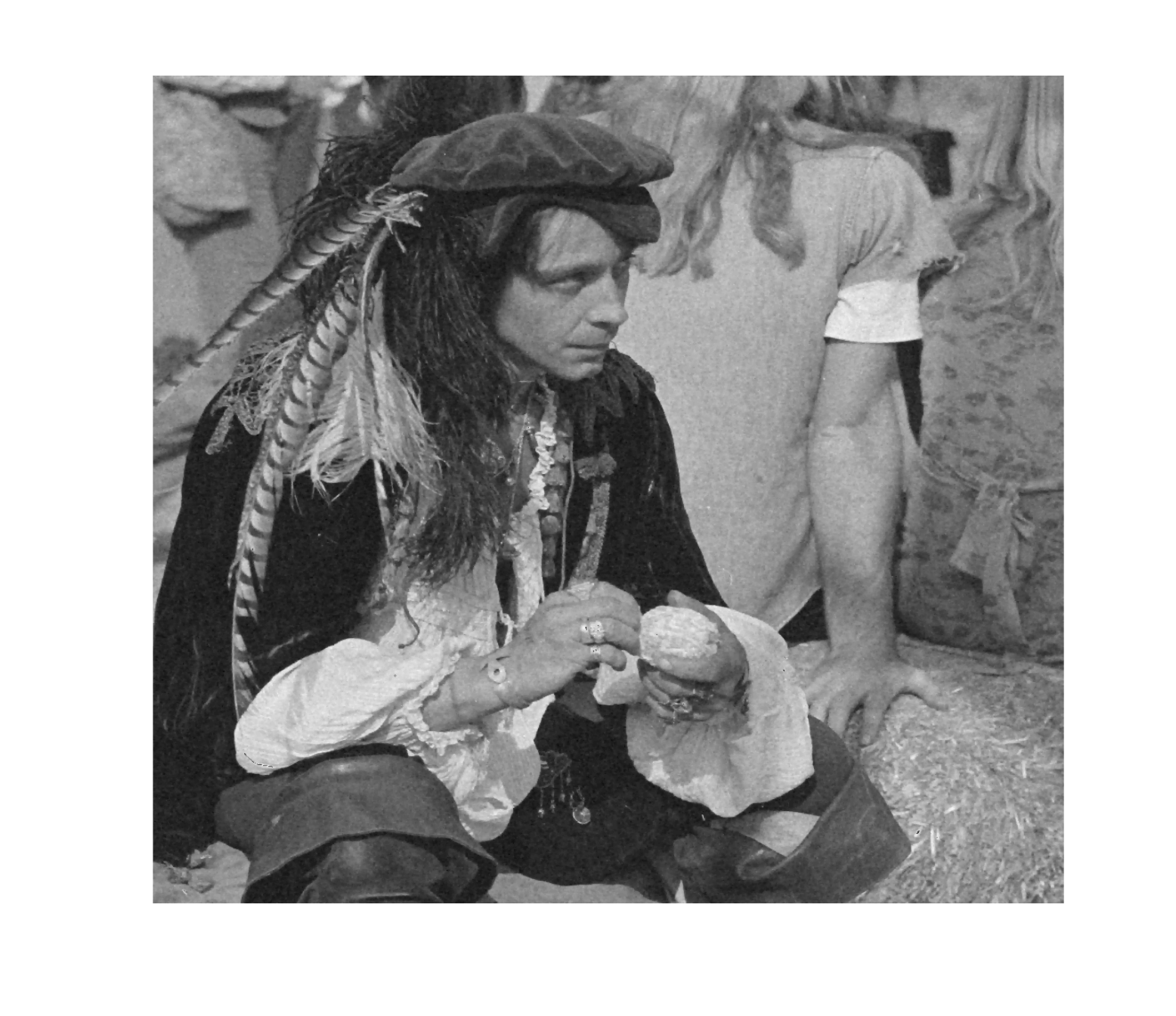}}
\qquad
\subfloat[][FISTA~ ($chit = 10$):  $\mathrm{F} = 3750.28$, $\mathrm{PSNR} = 30.16$, $\mathrm{T} = 42.04$]{\includegraphics[width=4.9cm]{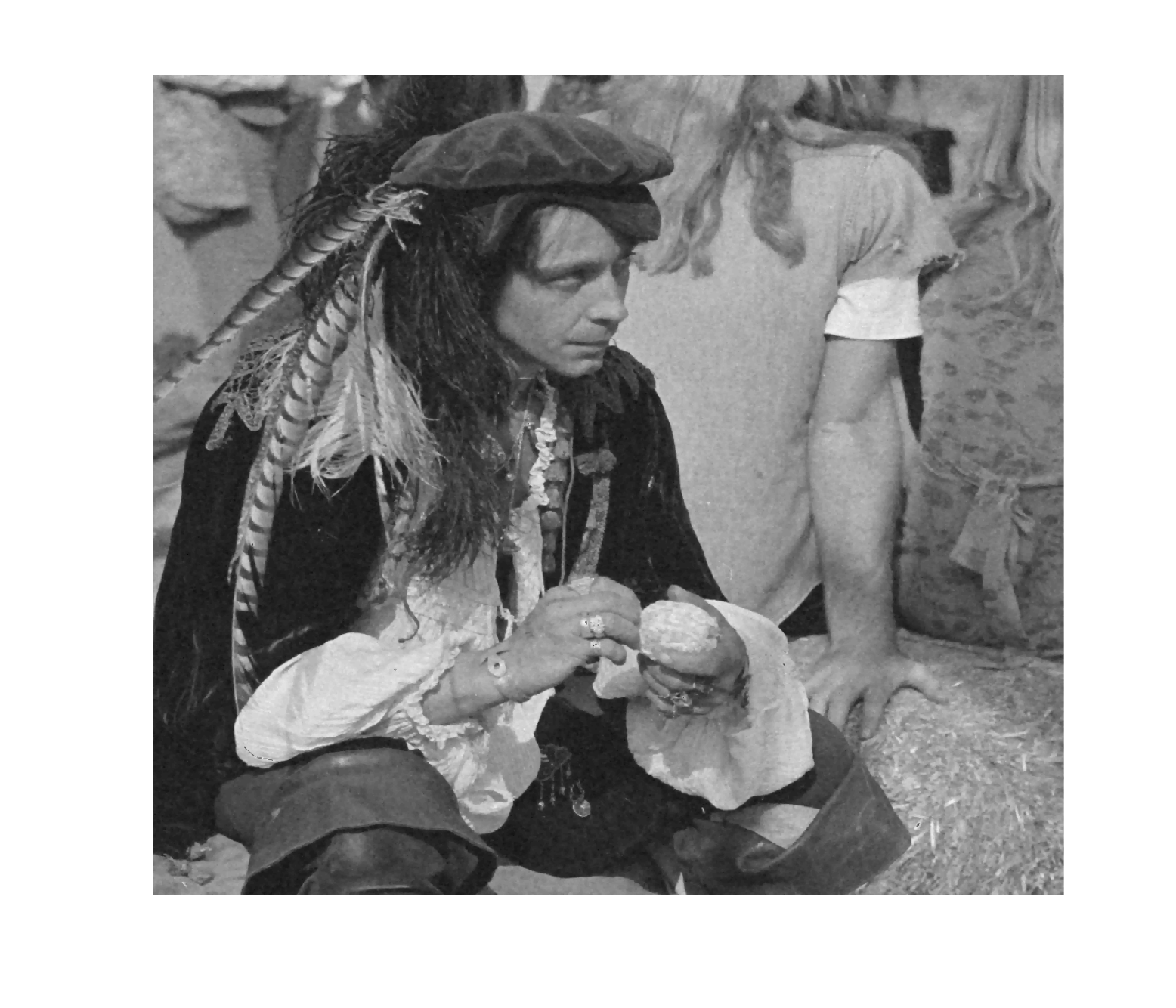}}%
\qquad
\subfloat[][FISTA~ ($chit = 20$): $\mathrm{F} = 3741.17$, $\mathrm{PSNR} = 30.08$, $\mathrm{T} = 65.88$;]{\includegraphics[width=4.9cm]{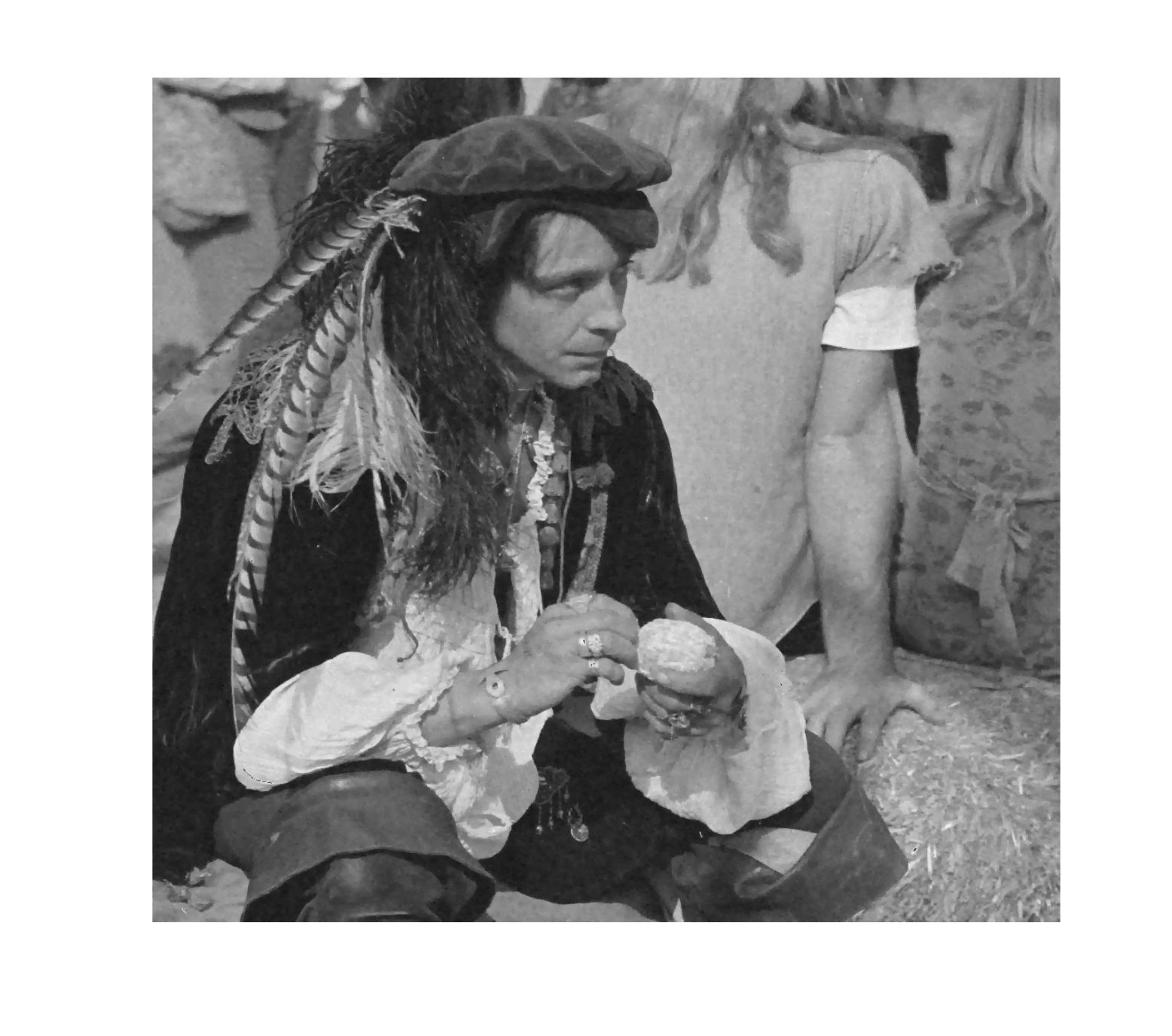}}

\caption{Denoising of the $1024 \times 1024$ Pirate image by IST, TwIST, FISTA and OSGA. Here, $\mathrm{F}$ denotes the best function value, PSNR is defined by (\ref{e.isnr}) and $\mathrm{T}$ stands for CPU time of the algorithms after 50 iterations.}%
\label{fig:cont}
\end{figure}

\subsubsection{Inpainting with isotropic total variation}\label{s.inp}  
Image inpainting is a set of techniques to fill lost or damaged parts of an image such that these modifications are undetectable for viewers who are not aware of the original image. The basic idea is originated from reconstructing of degraded or damaged paintings conventionally carried out by expert artists. This is without no doubt very time consuming and risky. Thus, the goal is to produce a modified image in which the inpainted region is unified into the image and looks normal for ordinary observers. The pioneering work on digital image inpainting was proposed by {\sc Bertalmi} et al. in \cite{BerSCB} based on nonlinear PDE. Since then lots of applications of inpainting in image interpolation, photo restoration, zooming, super-resolution images, image compression and image transmission have emerged. 

In 2006, {\sc Chan} in \cite{ChaSZ} proposed an efficient method to recover piecewise constant or smooth images by combining total variation regularizers and wavelet representations. Motivated by this work, we consider inpainting with isotropic total variation to reconstruct images with missing data. We consider the $512 \times 512$ Head CT image with 40\% missing sample. The same as previous section, IST, TwIST, FISTA and OSGA are employed to recover the image. The parameter $\lambda$ sets to $0.09$. We draw your attention to the different running time of these algorithms in each step, we stop them after 50 seconds. To have a fair comparison, three versions of each IST, TwIST and FISTA corresponding to 5, 10 and 20 iterations of Chambolle's algorithm for solving the proximal subproblems are considered. The results of these implementations are illustrated in Figures 3 and 4.

\begin{figure}[p]\label{f.inp1}
\centering
\subfloat[][$rel.~ 1~ vs.~ iterations~ (chit = 5)$]{\includegraphics[width=4.9cm]{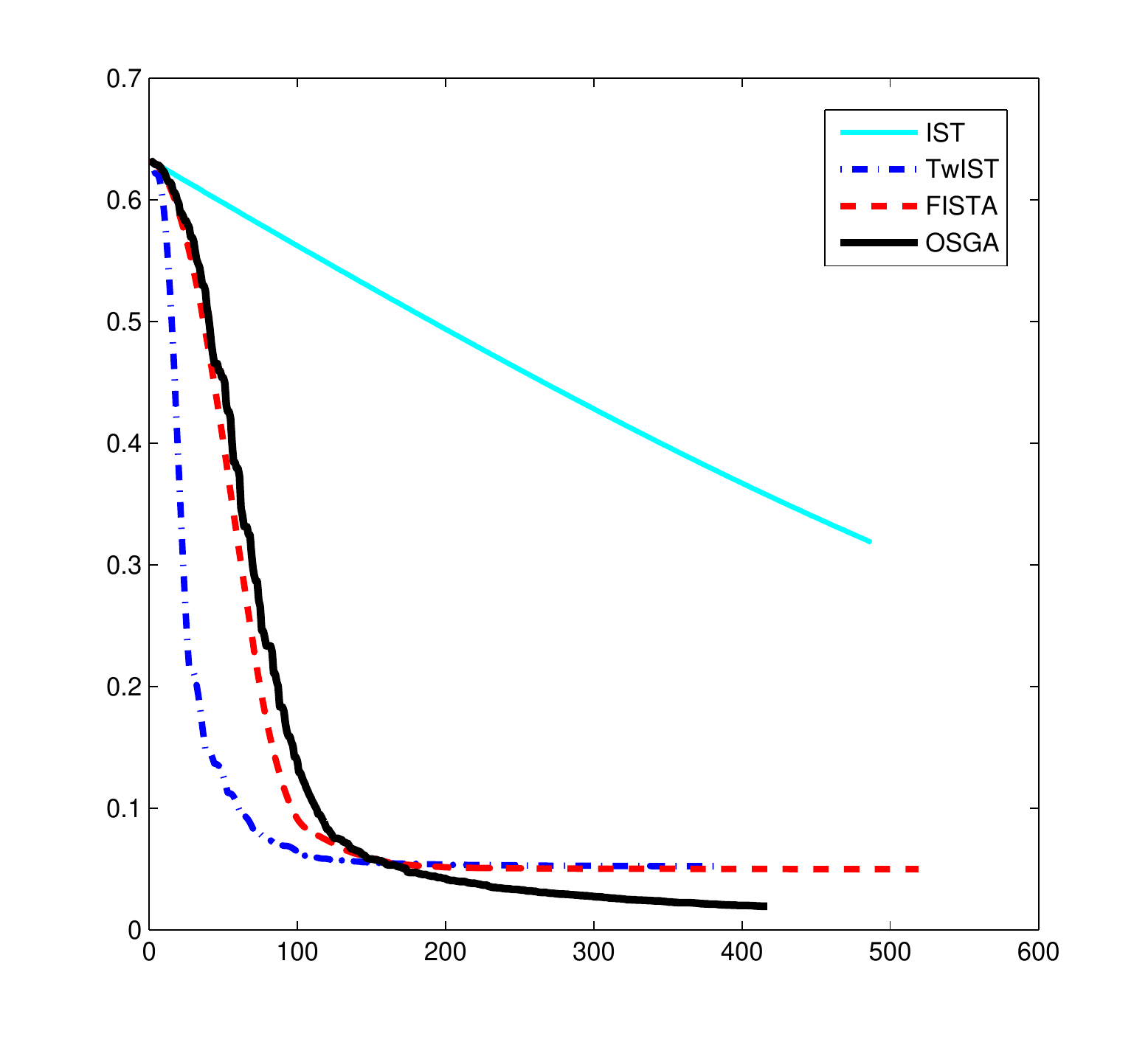}}%
\qquad
\subfloat[][$rel.~ 1~ vs.~ iterations~ (chit = 10)$]{\includegraphics[width=4.9cm]{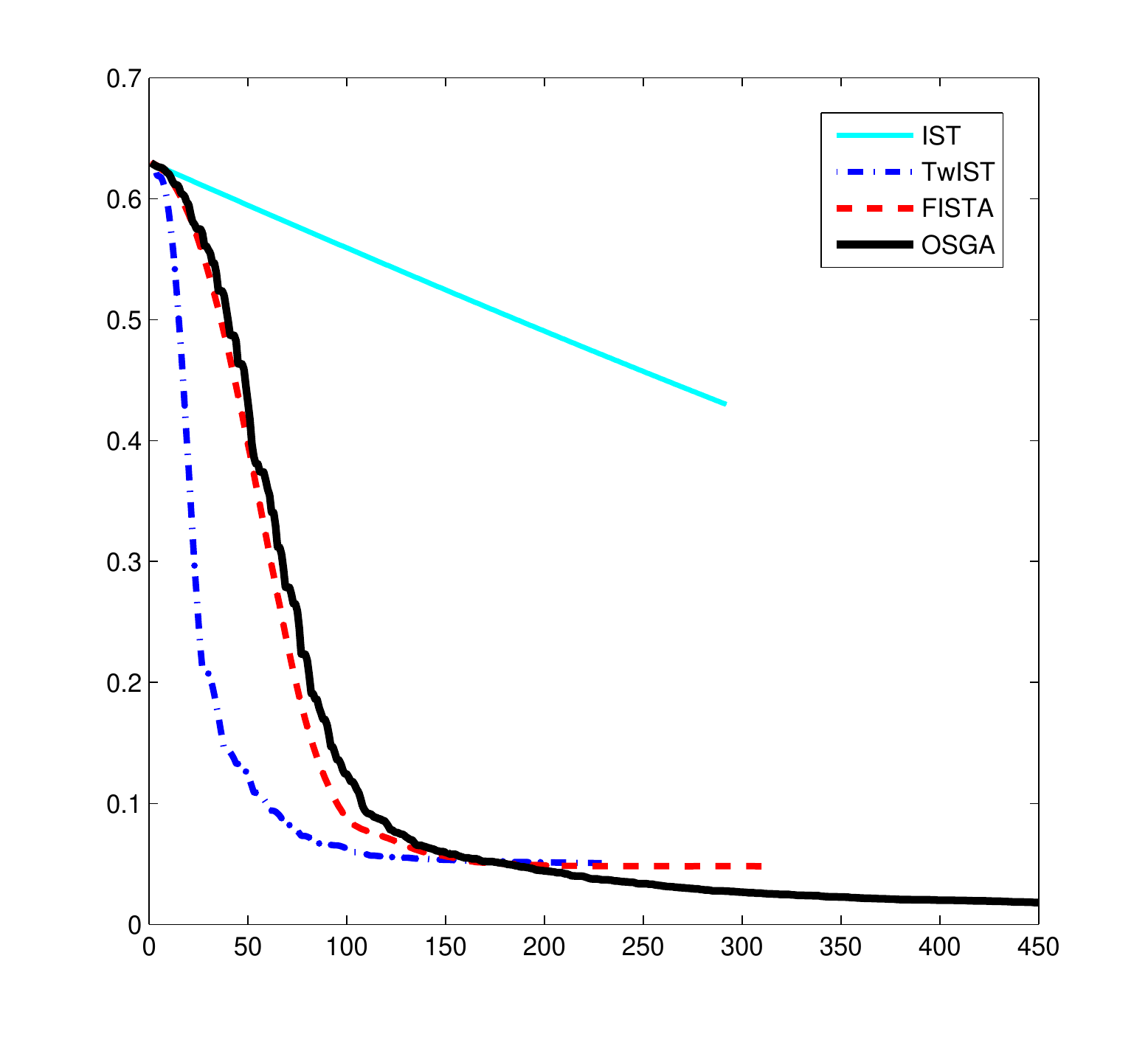}}
\qquad
\subfloat[][$rel.~ 1~ vs.~ iterations~ (chit = 20)$]{\includegraphics[width=4.9cm]{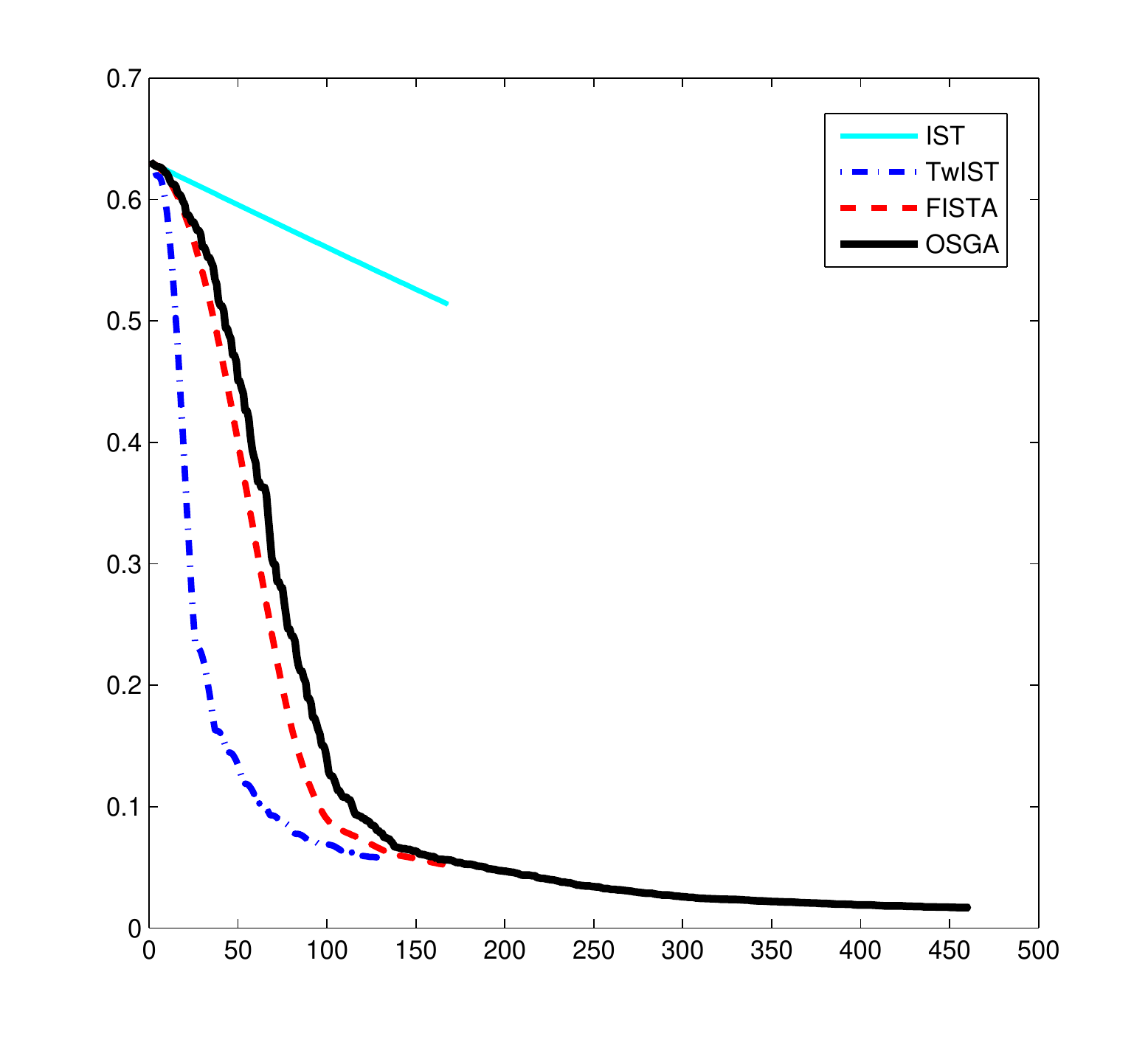}}%
\qquad
\subfloat[][$rel.~ 2~ vs.~ time~ (chit = 5)$]{\includegraphics[width=4.9cm]{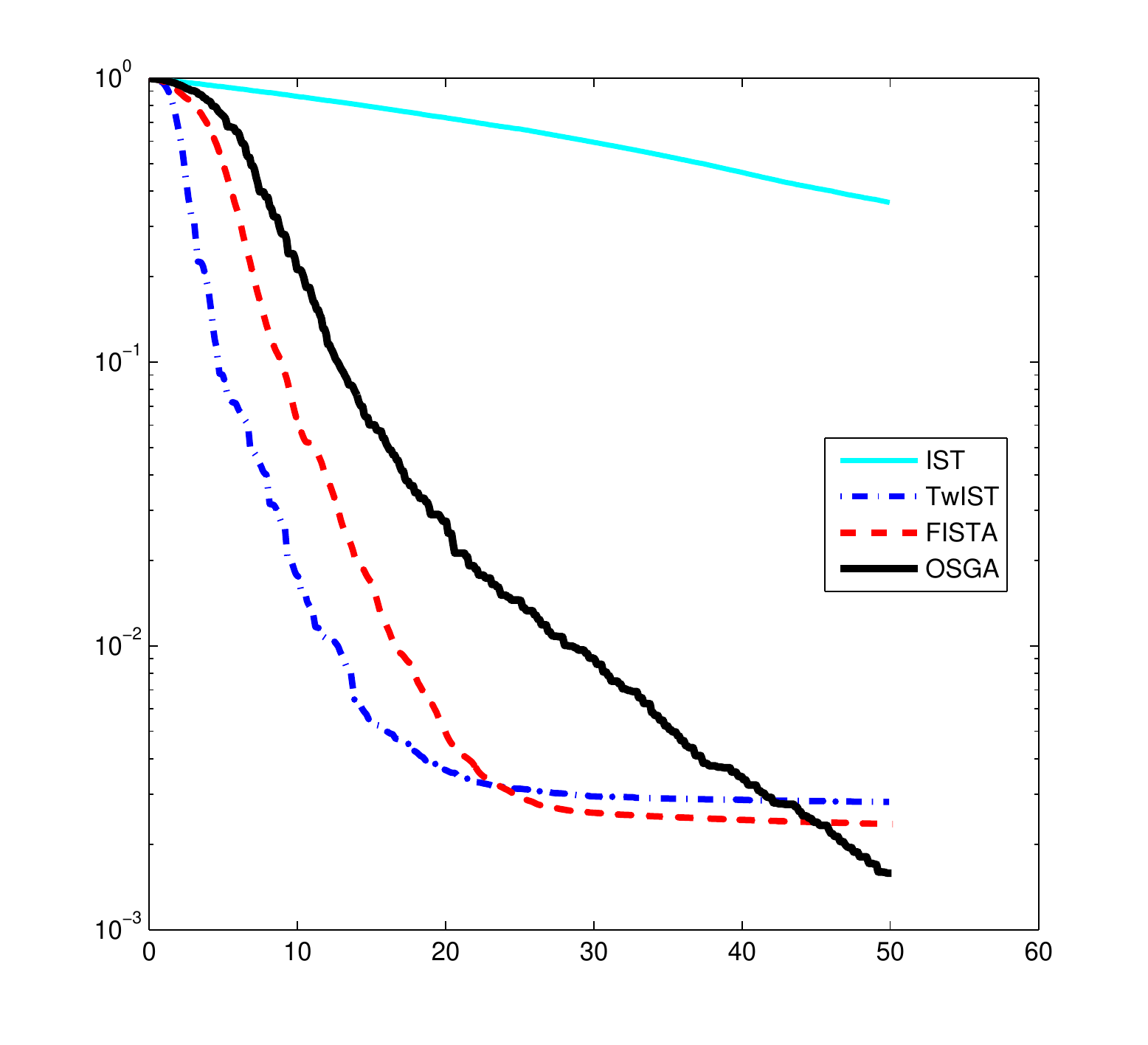}}
\qquad
\subfloat[][$rel.~ 2~ vs.~ time~ (chit = 10)$]{\includegraphics[width=4.9cm]{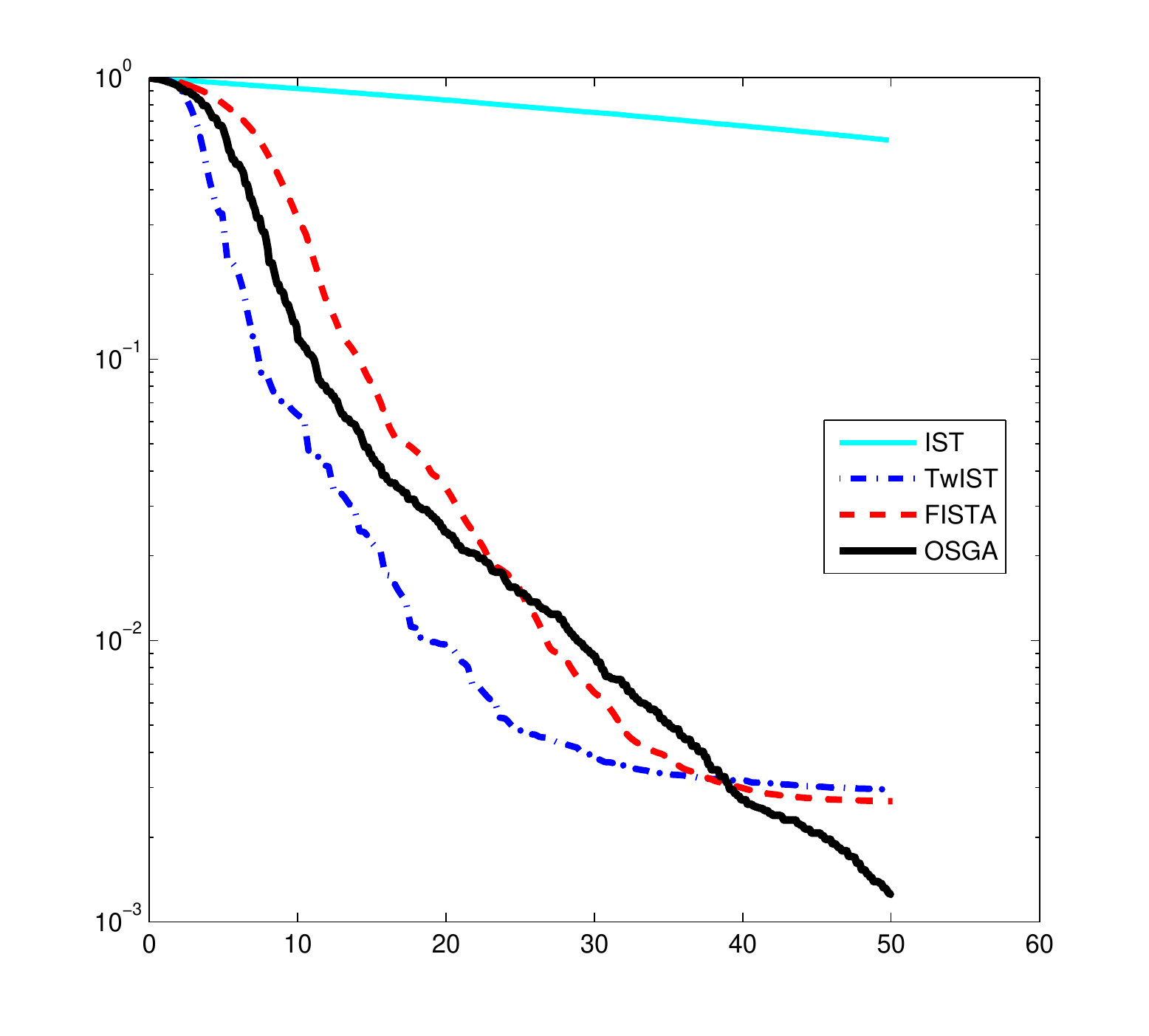}}%
\qquad
\subfloat[][$rel.~ 2~ vs.~ time~ (chit = 20)$]{\includegraphics[width=4.9cm]{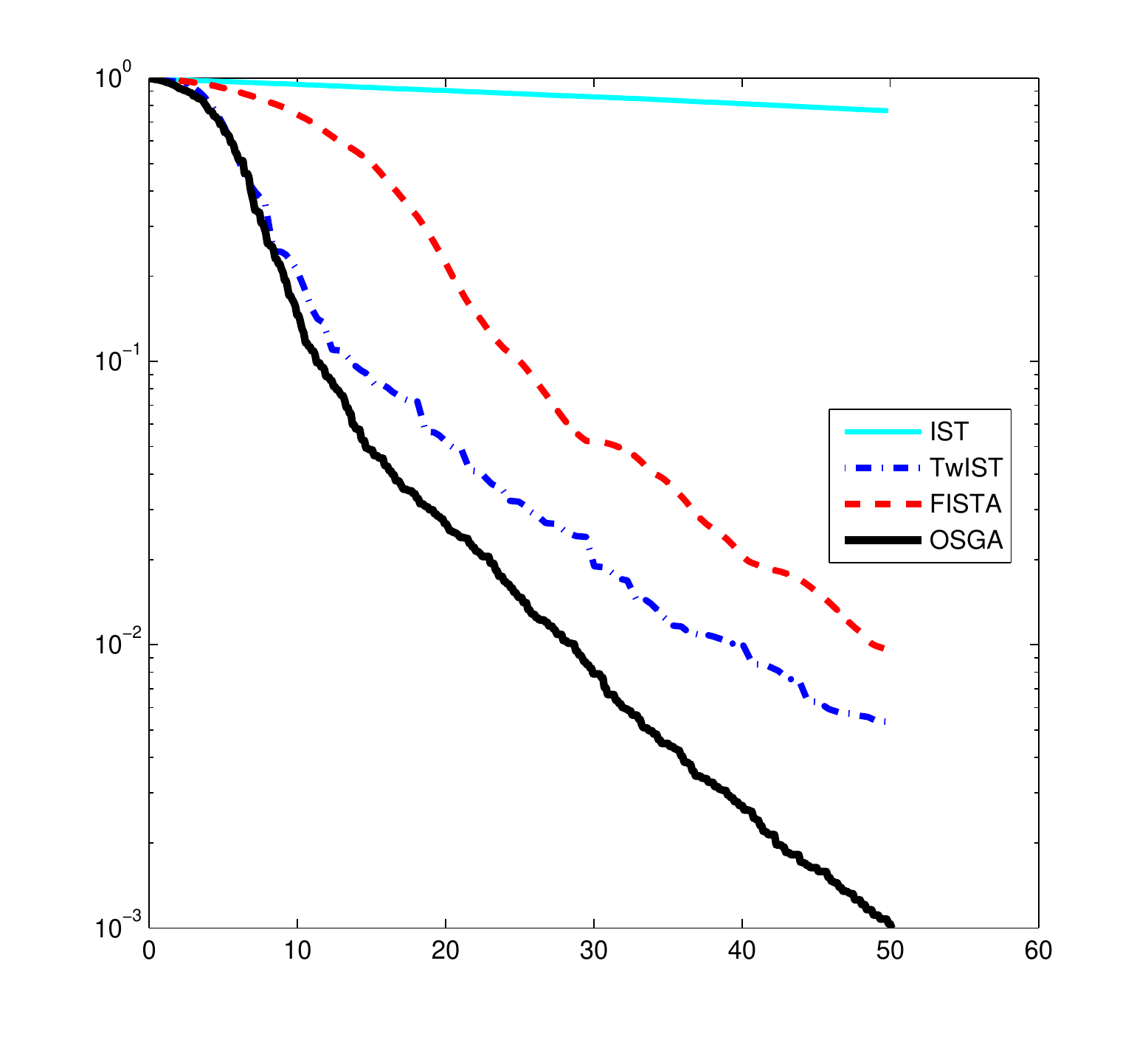}}
\qquad
\subfloat[][$rel.~ 2~ vs.~ iterations~ (chit = 5)$]{\includegraphics[width=4.9cm]{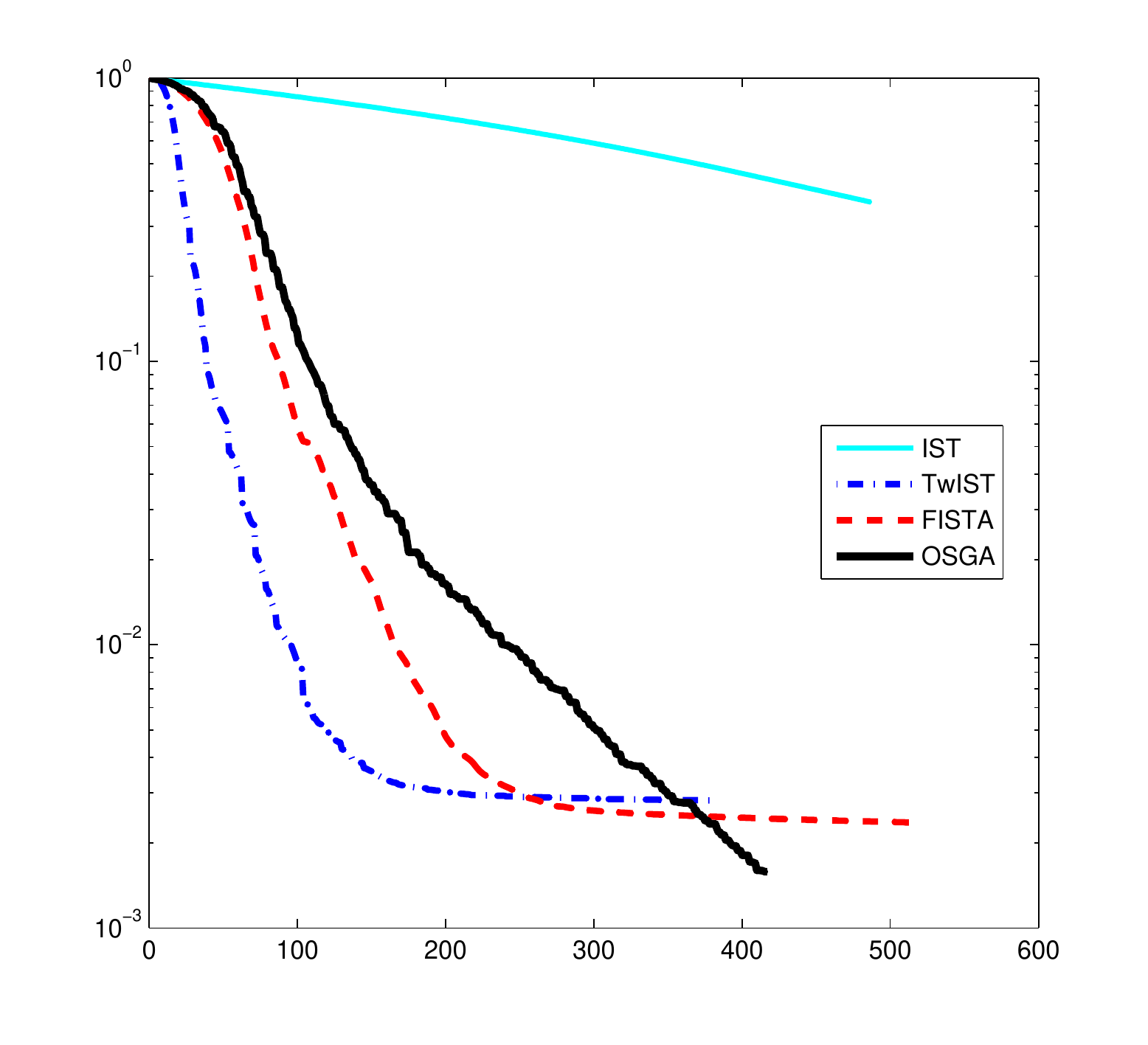}}
\qquad
\subfloat[][$rel.~ 2~ vs.~ iterations~ (chit = 10)$]{\includegraphics[width=4.9cm]{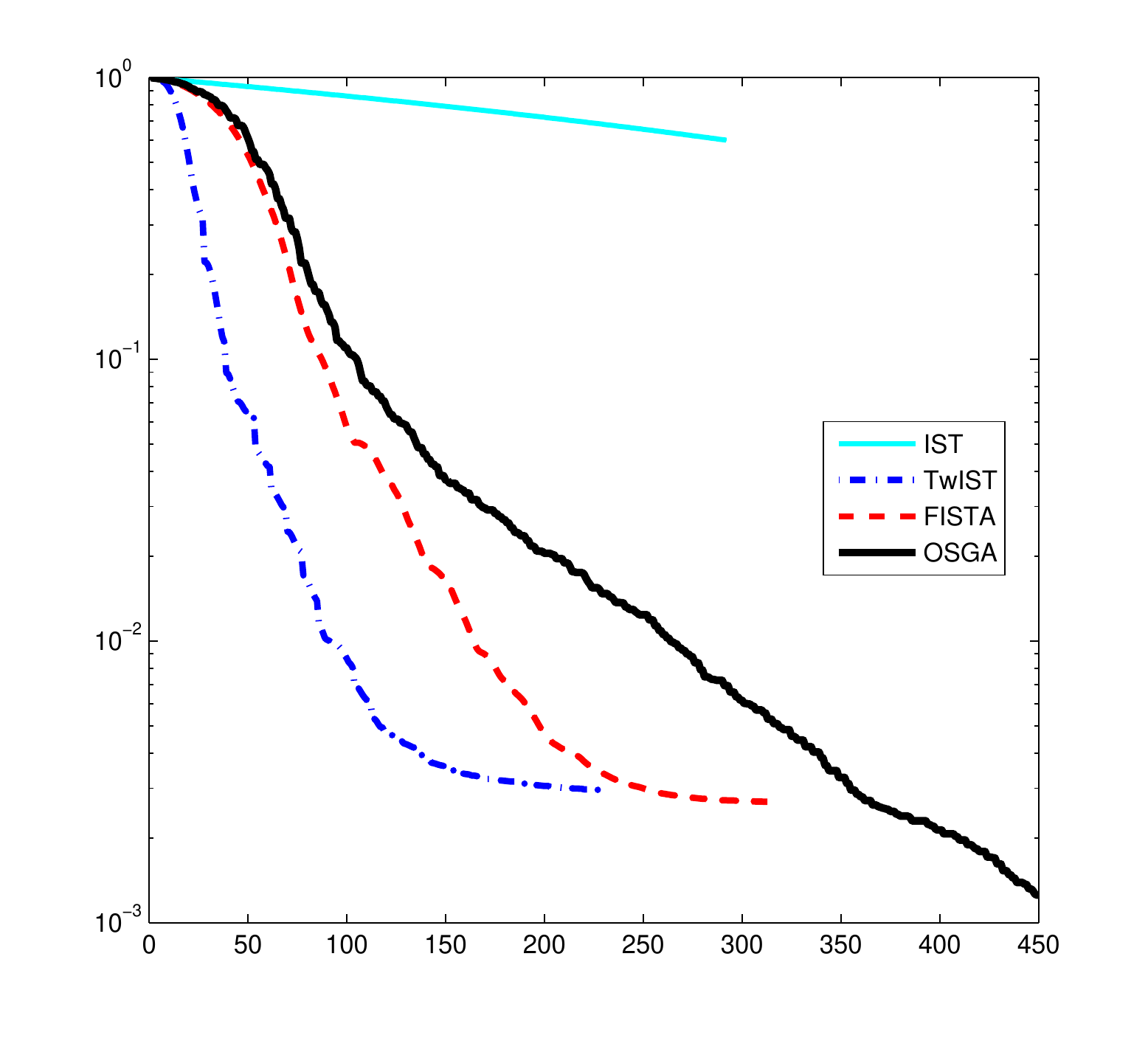}}%
\qquad
\subfloat[][$rel.~ 2~ vs.~ iterations~ (chit = 20)$]{\includegraphics[width=4.9cm]{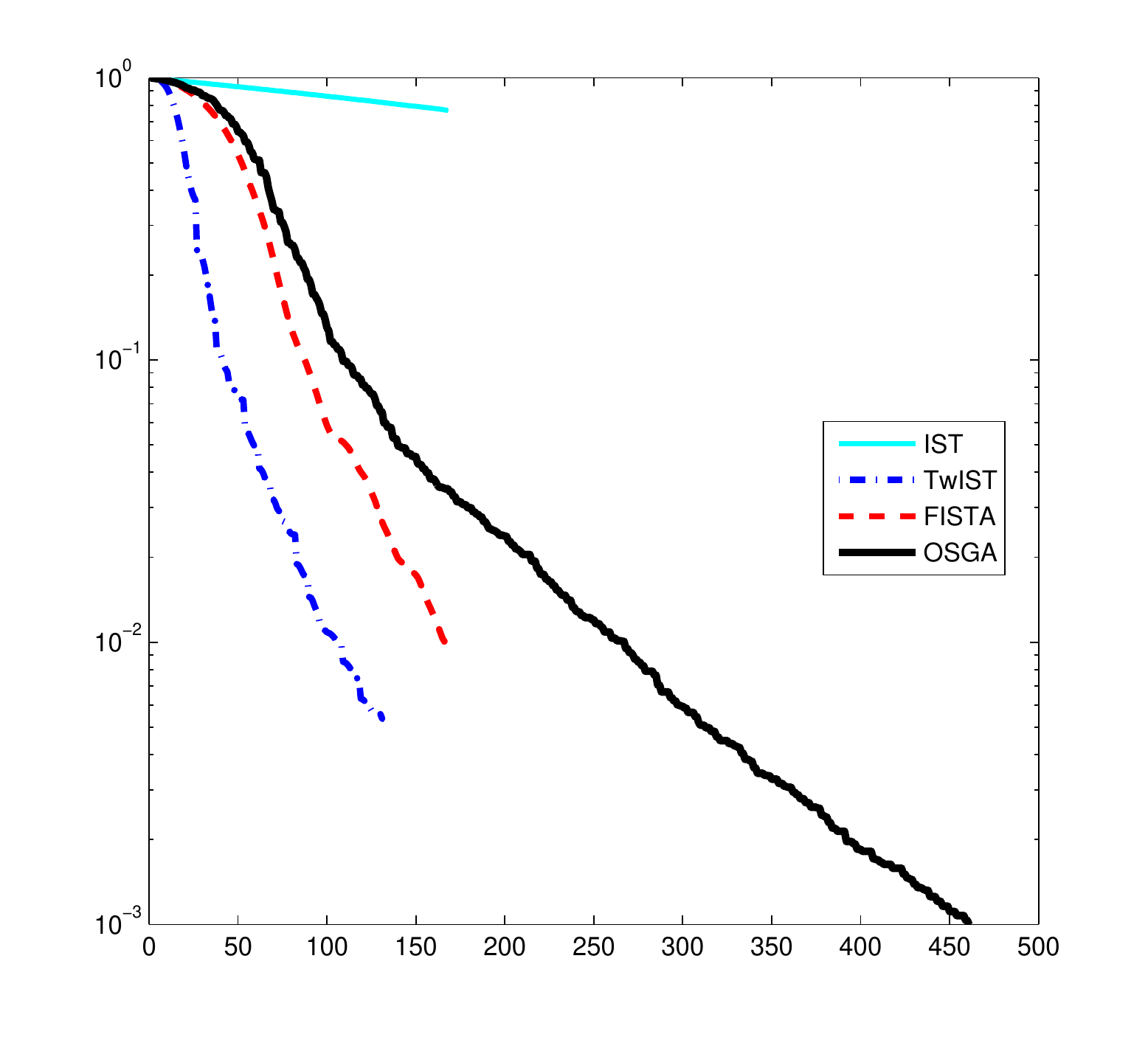}}
\qquad
\subfloat[][$ISNR~ vs.~ iterations~ (chit = 5)$]{\includegraphics[width=4.9cm]{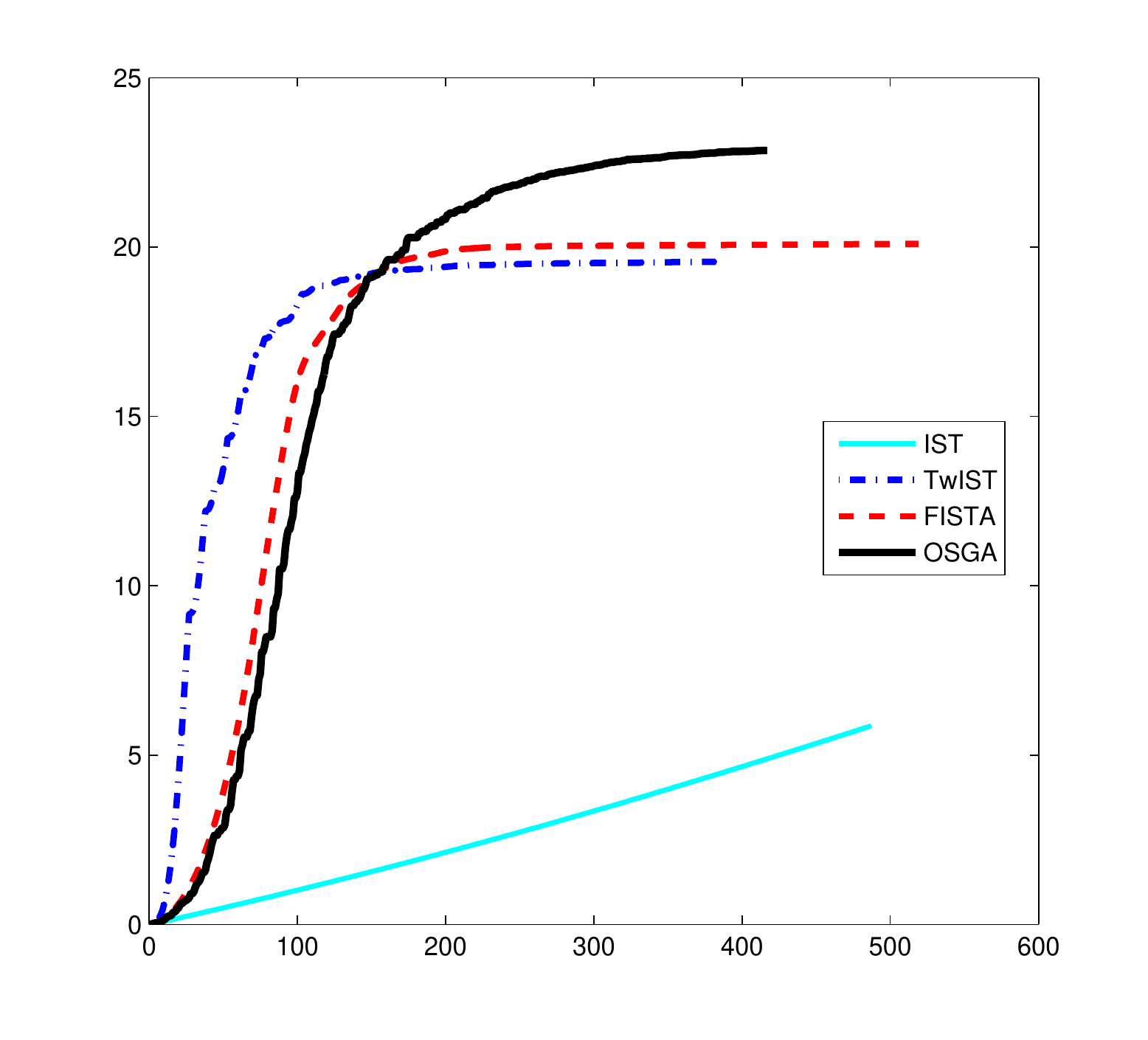}}
\qquad
\subfloat[][$ISNR~ vs.~ iterations~ (chit = 10)$]{\includegraphics[width=4.9cm]{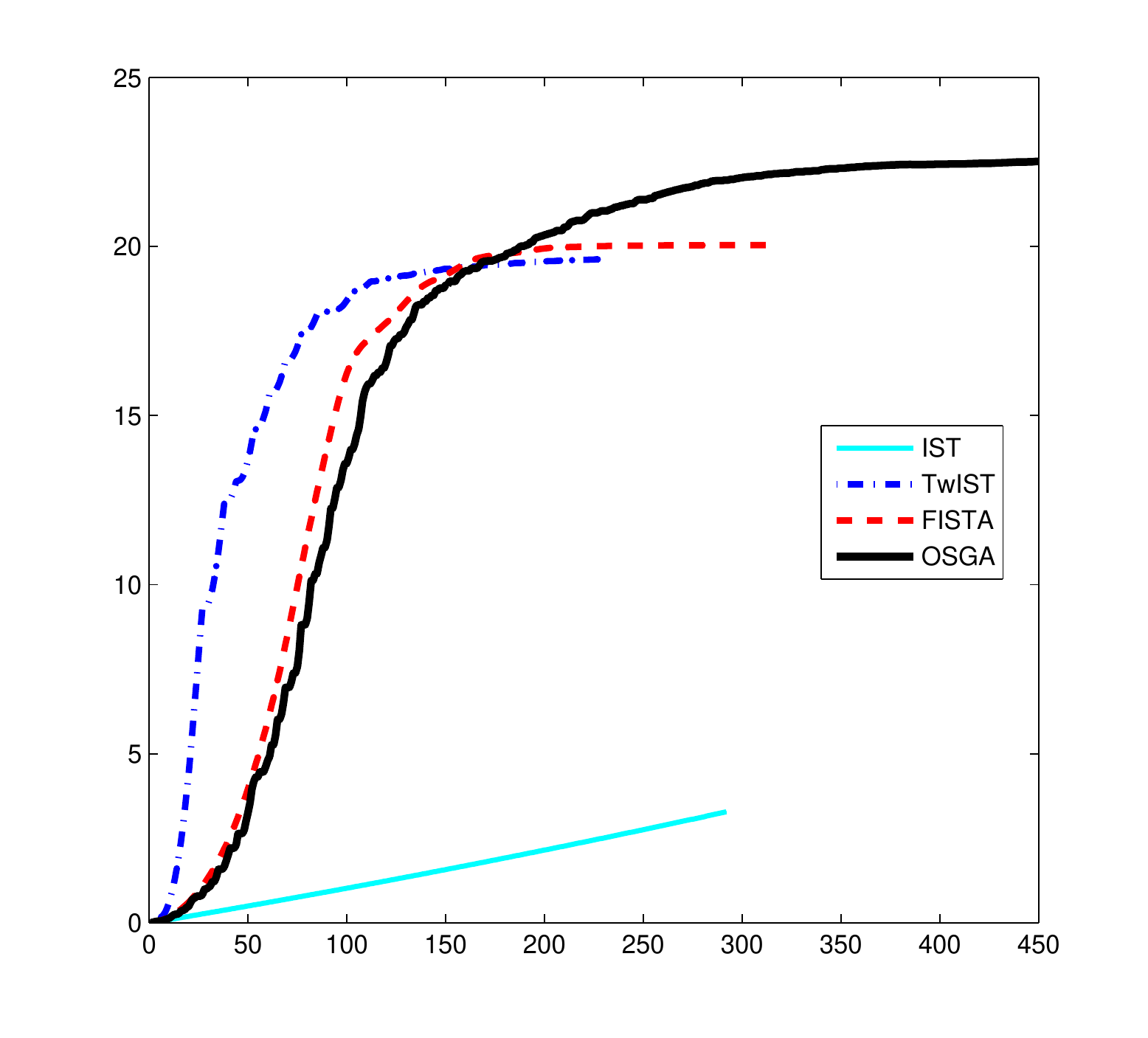}}%
\qquad
\subfloat[][$ISNR~ vs.~ iterations~ (chit = 20)$]{\includegraphics[width=4.9cm]{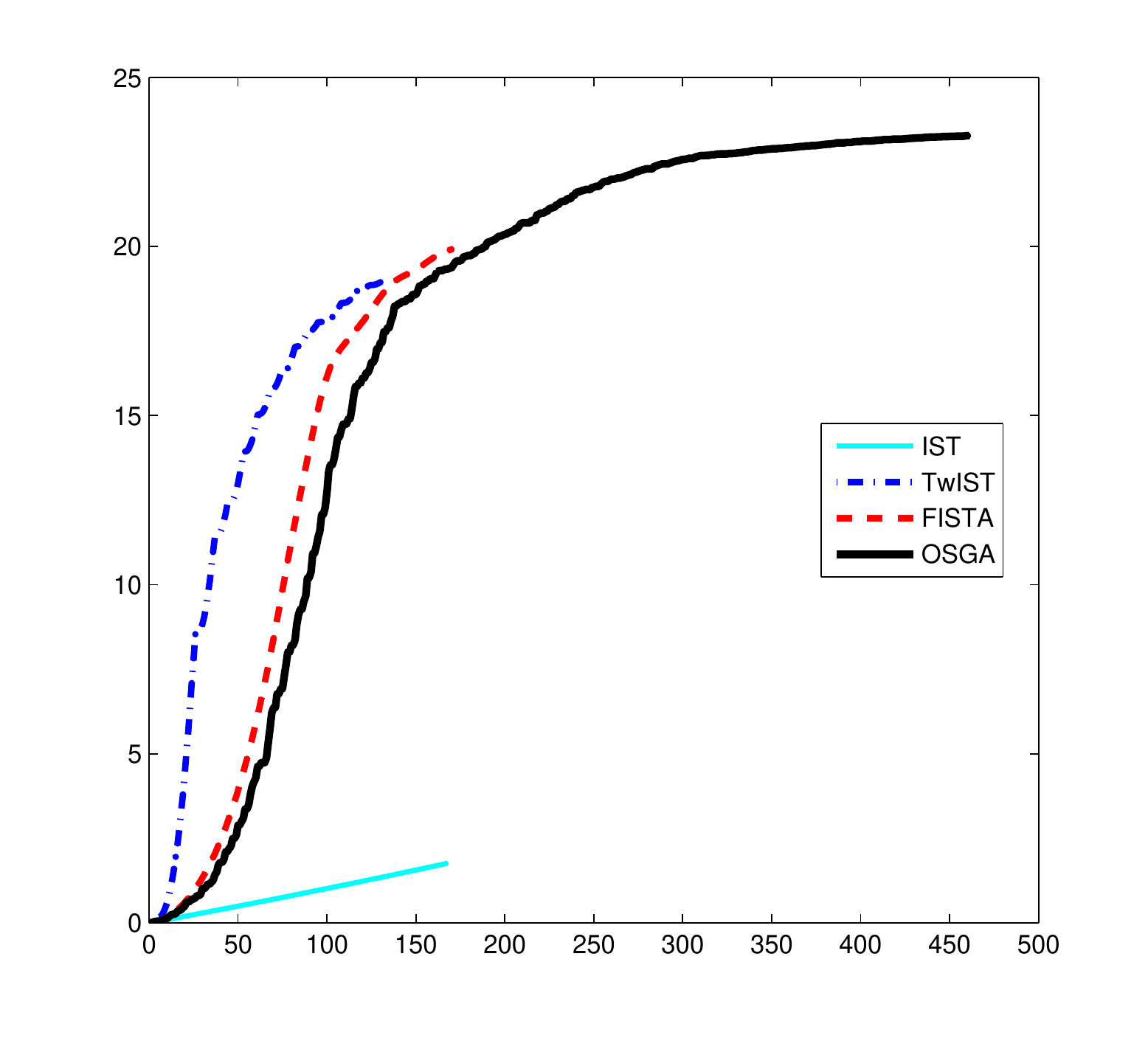}}
\caption{A comparison among IST, TwIST, FISTA and OSGA for inpainting the $512 \times 512$ Head CT image when 40\% of its data is missing. The algorithms are stopped after 50 seconds of the running time. Each column stands for comparisons with a different number of Chambolle's iterations. The first row denotes the relative error $rel.~ 1 = \|x_k - x^*\|_2/\|x^*\|_2$ of points versus iterations, the second row shows the relative error $rel.~ 2 = (f_k - f^*)/(f_0 - f^*)$ of function values versus time, the third row illustrates $rel. 2$ versus iterations, and fourth row stands for ISNR (\ref{e.isnr}) versus iterations.}%
\label{fig:cont}
\end{figure}

\begin{figure}[p]\label{f.inp2}
\centering
\subfloat[][Original image]{\includegraphics[width=4.9cm]{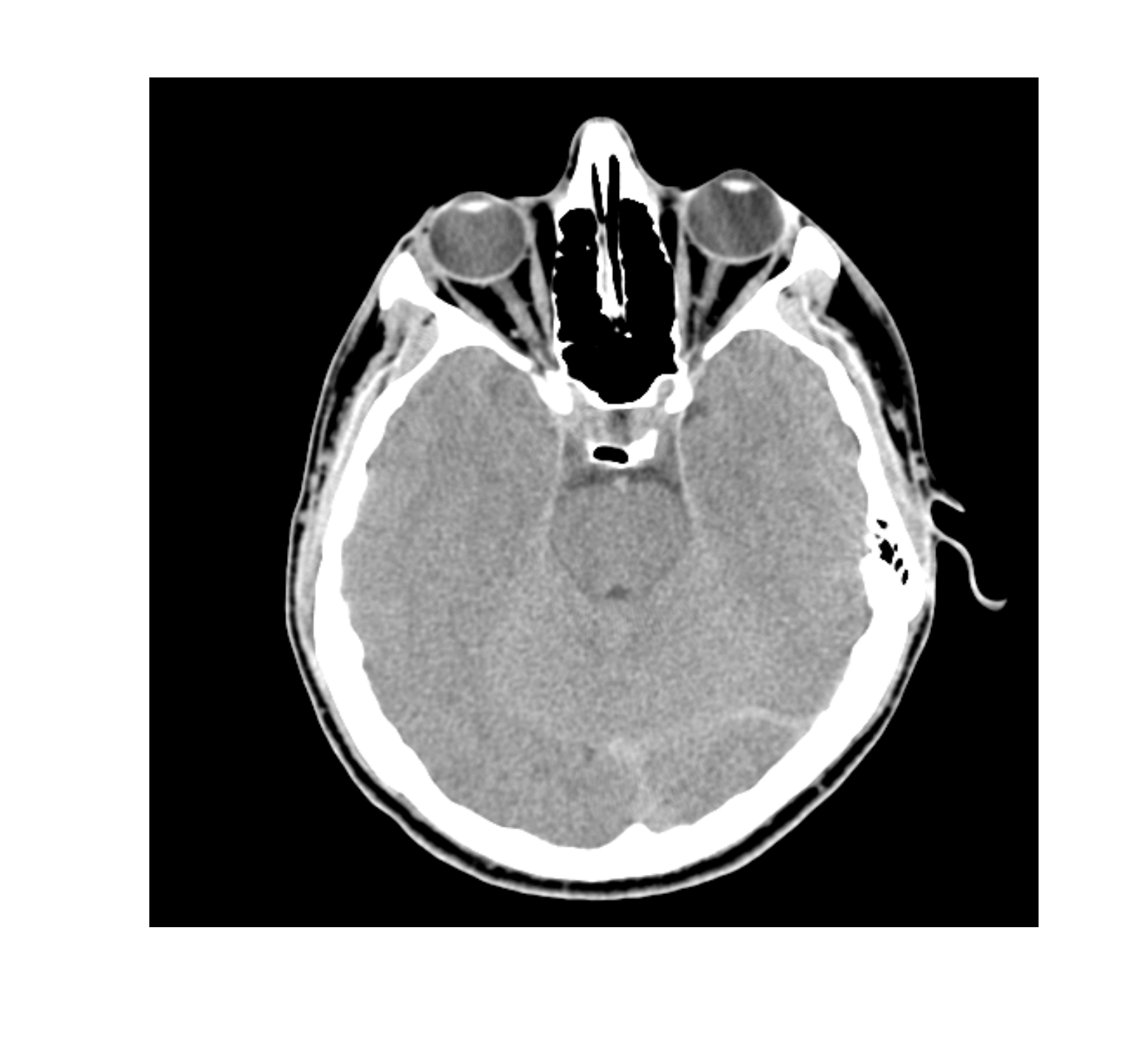}}%
\qquad
\subfloat[][Blurred/noisy image]{\includegraphics[width=4.9cm]{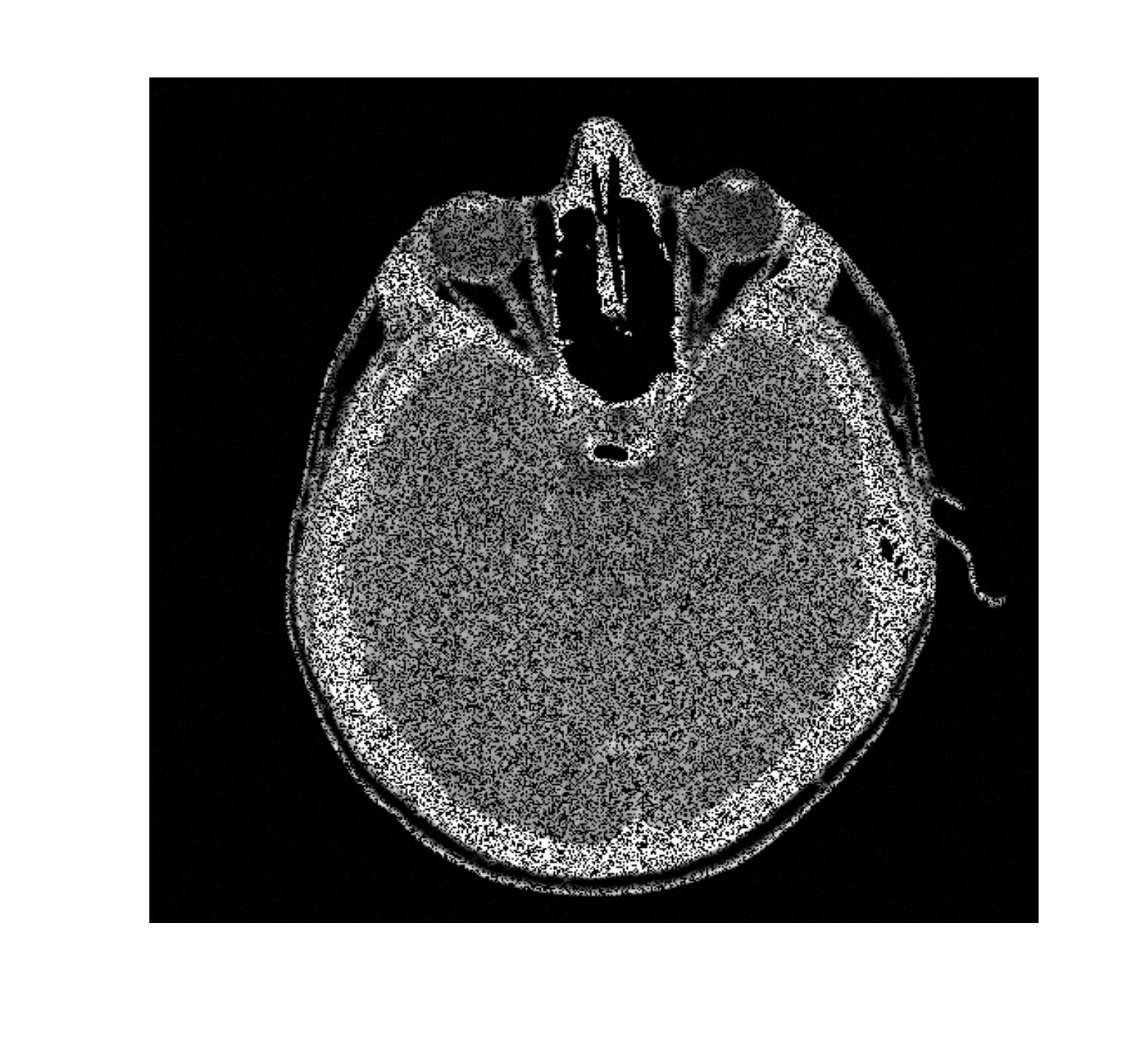}}
\qquad
\subfloat[][OSGA: $\mathrm{F} = 187564.02$, $\mathrm{PSNR} = 33.13$]{\includegraphics[width=4.9cm]{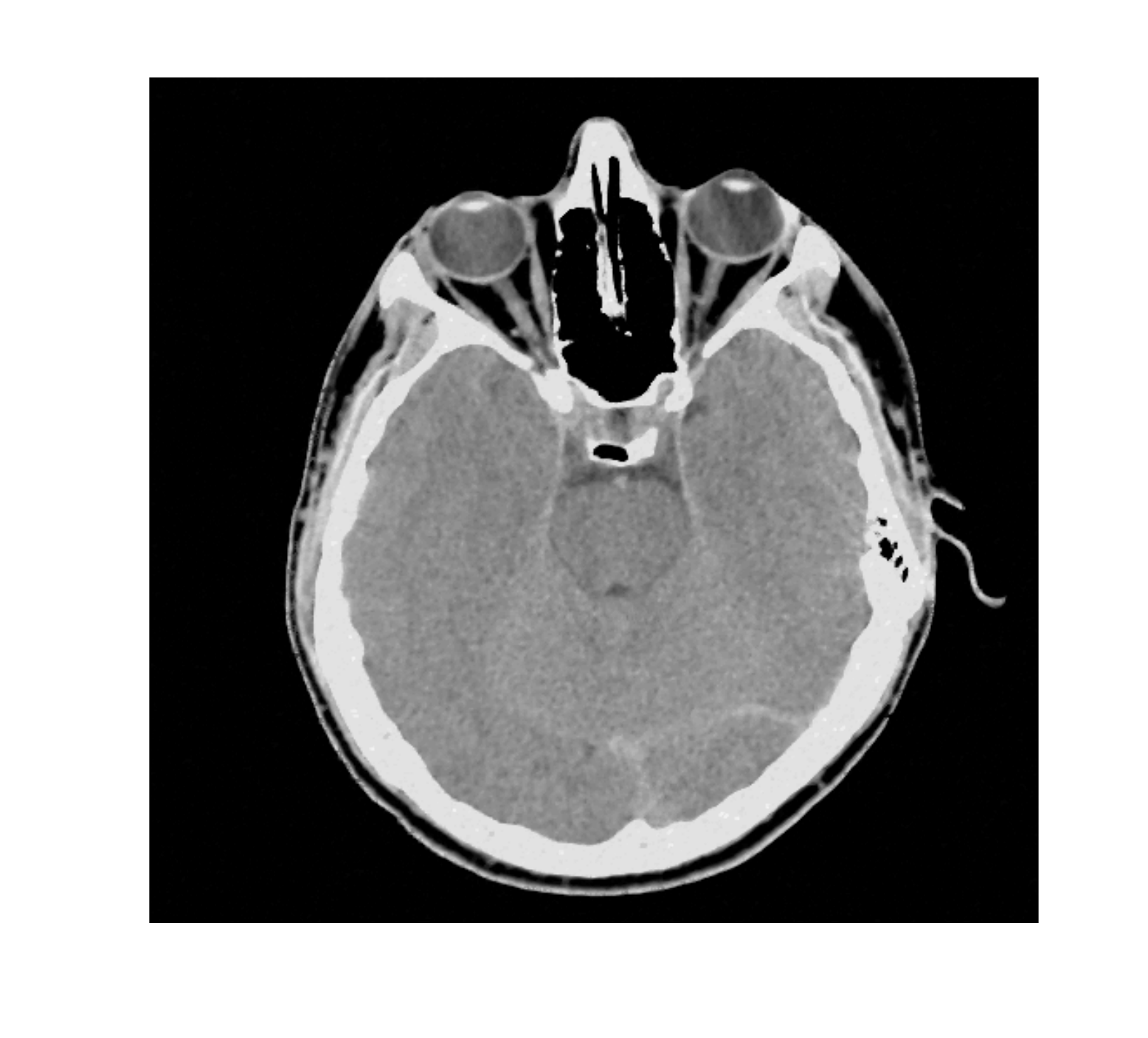}}%
\qquad
\subfloat[][IST ($chit = 5$): $\mathrm{F} = 710603.30$, $\mathrm{PSNR} = 16.15$]{\includegraphics[width=4.9cm]{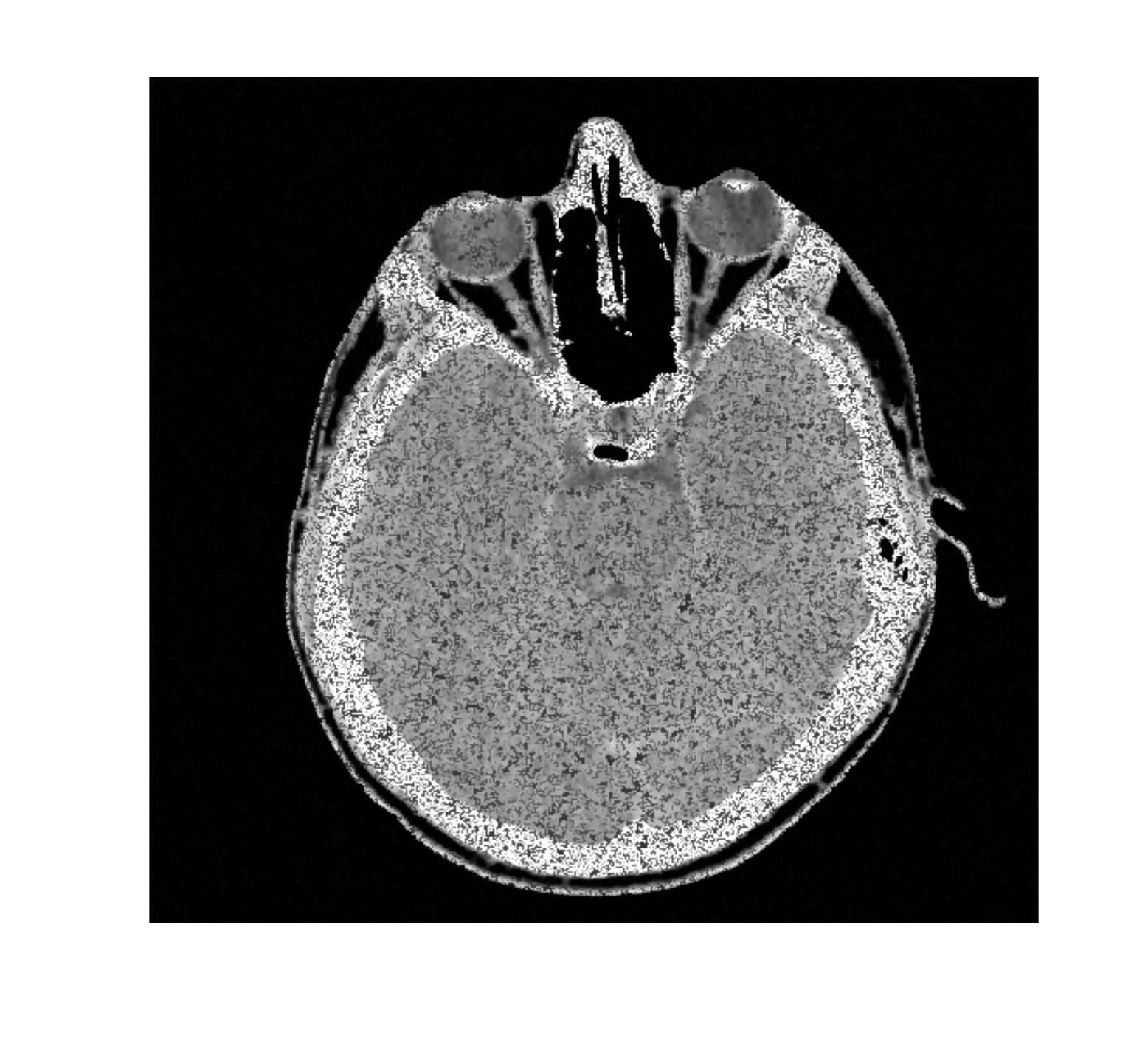}}
\qquad
\subfloat[][IST ($chit = 10$): $\mathrm{F} = 1046712.18$, $\mathrm{PSNR} = 13.61$]{\includegraphics[width=4.9cm]{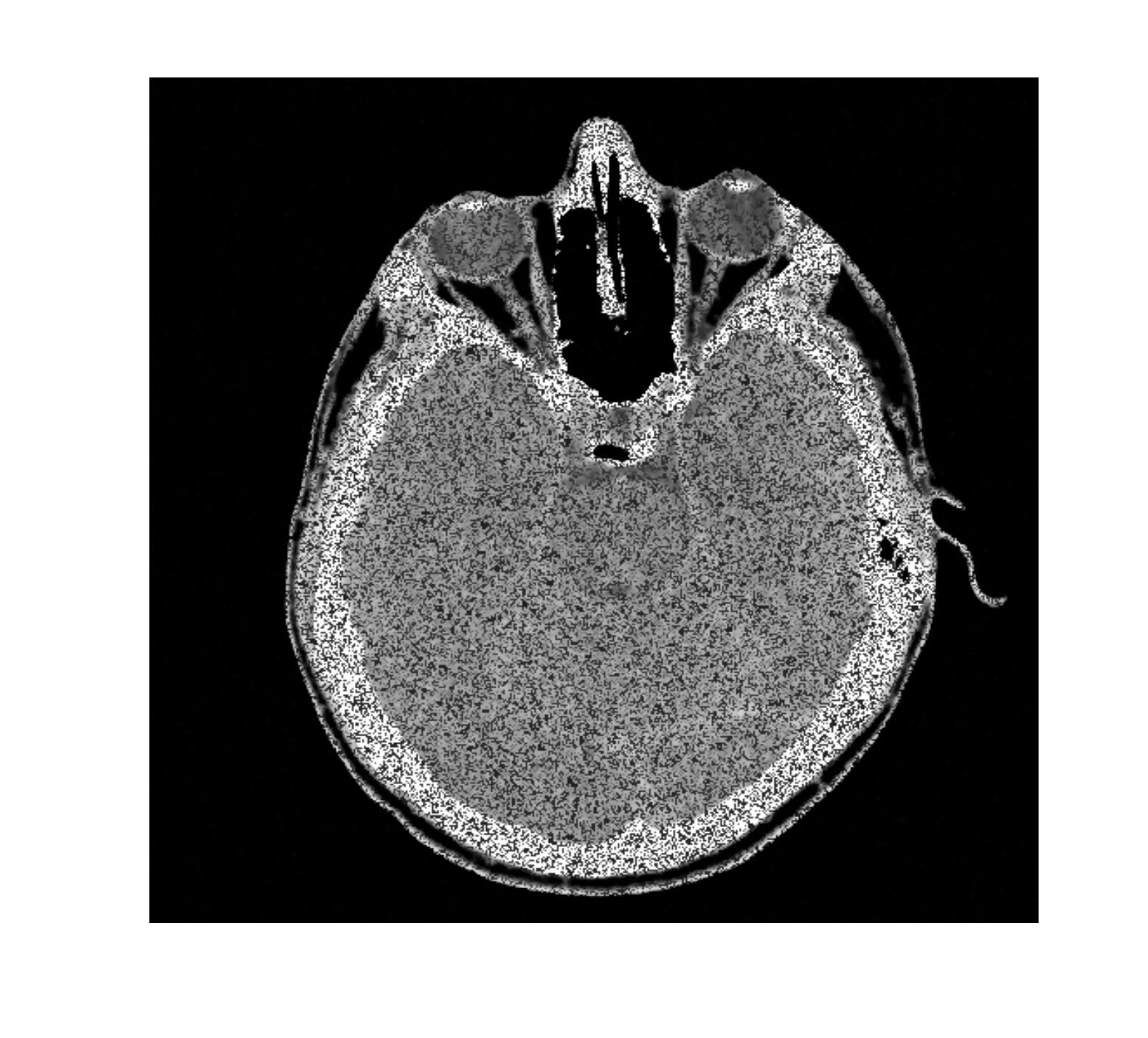}}%
\qquad
\subfloat[][IST ($chit = 20$): $\mathrm{F} = 1284434.37$, $\mathrm{PSNR} = 12.08$]{\includegraphics[width=4.9cm]{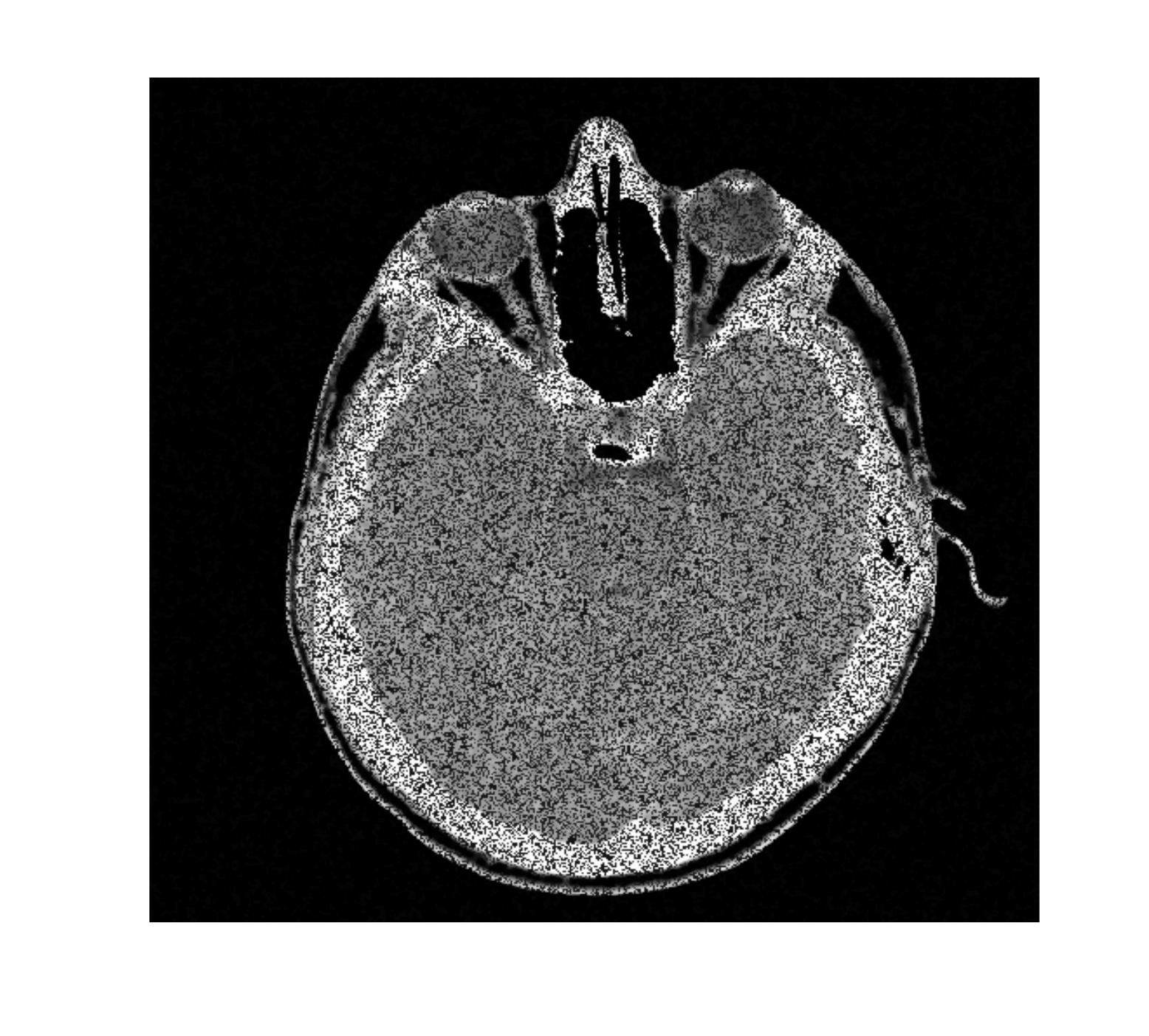}}
\qquad
\subfloat[][TwIST ($chit = 5$): $\mathrm{F} = 189347.03$, $\mathrm{PSNR} = 29.84$]{\includegraphics[width=4.9cm]{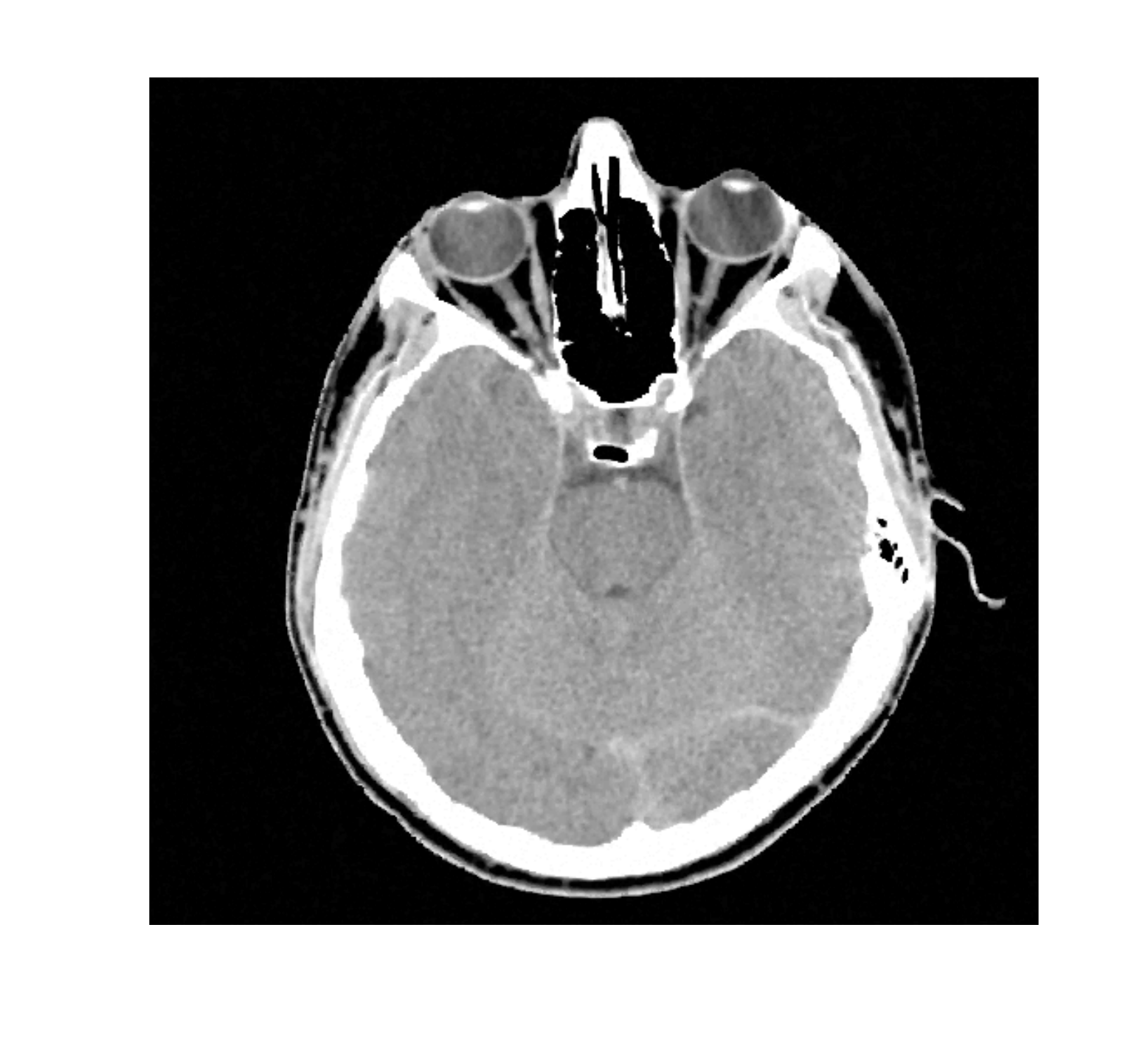}}
\qquad
\subfloat[][TwIST ($chit = 10$): $\mathrm{F} = 189533.03$, $\mathrm{PSNR} = 29.94$]{\includegraphics[width=4.9cm]{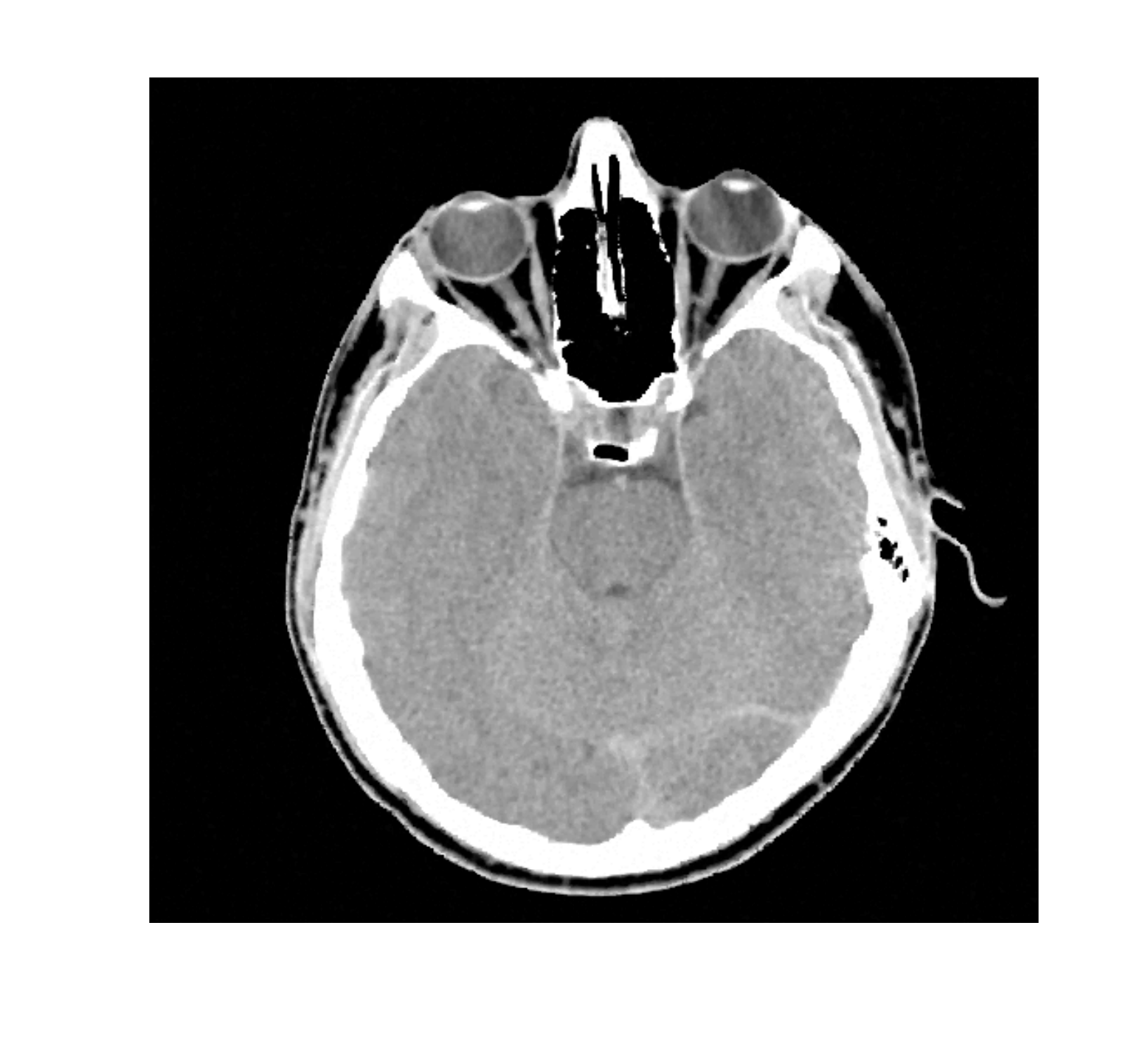}}%
\qquad
\subfloat[][TwIST ($chit = 20$): $\mathrm{F} = 192946.47$, $\mathrm{PSNR} = 29.26$]{\includegraphics[width=4.9cm]{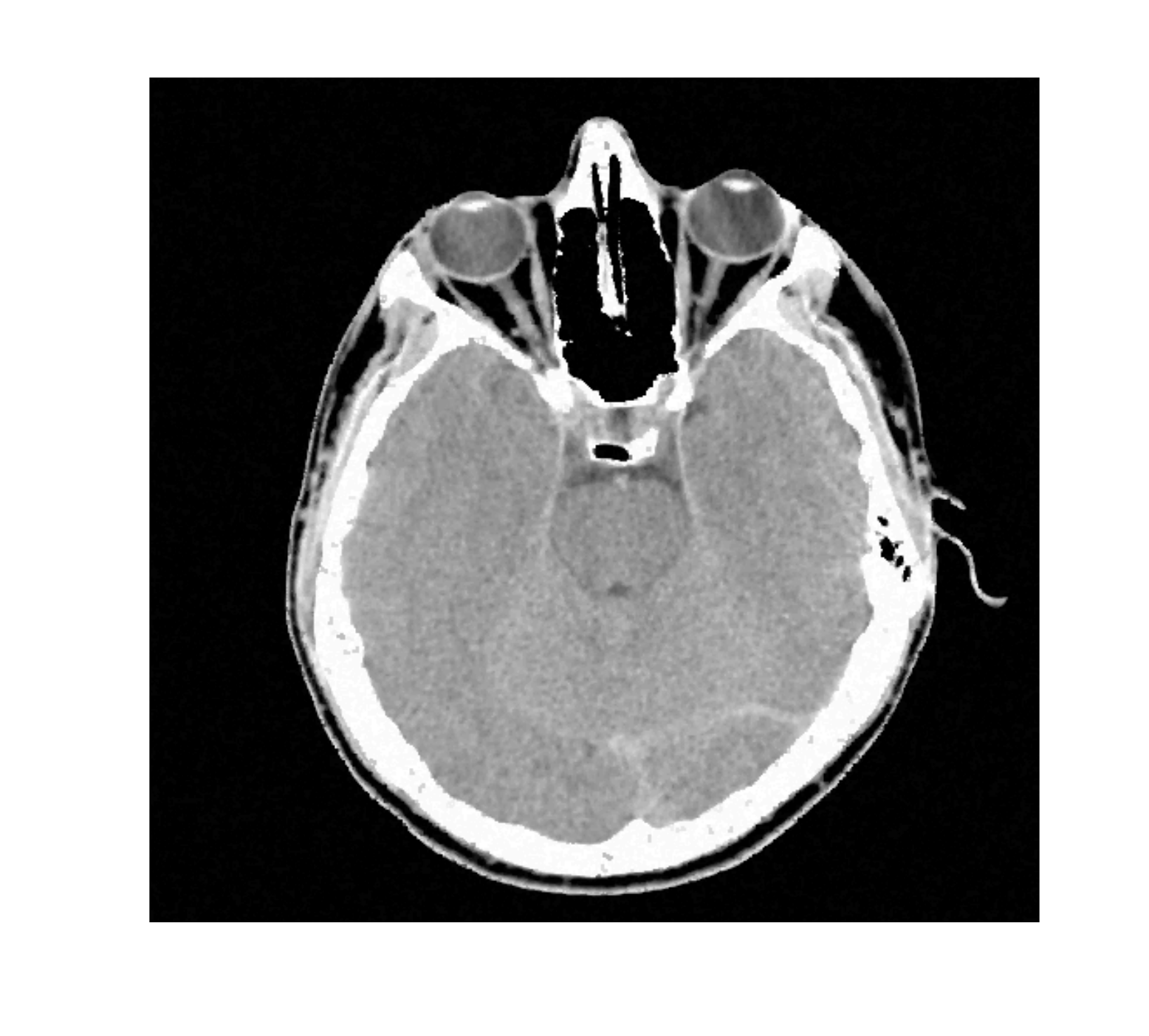}}
\qquad
\subfloat[][FISTA ($chit = 5$): $\mathrm{F} = 188676.25$, $\mathrm{PSNR} = 30.37$]{\includegraphics[width=4.9cm]{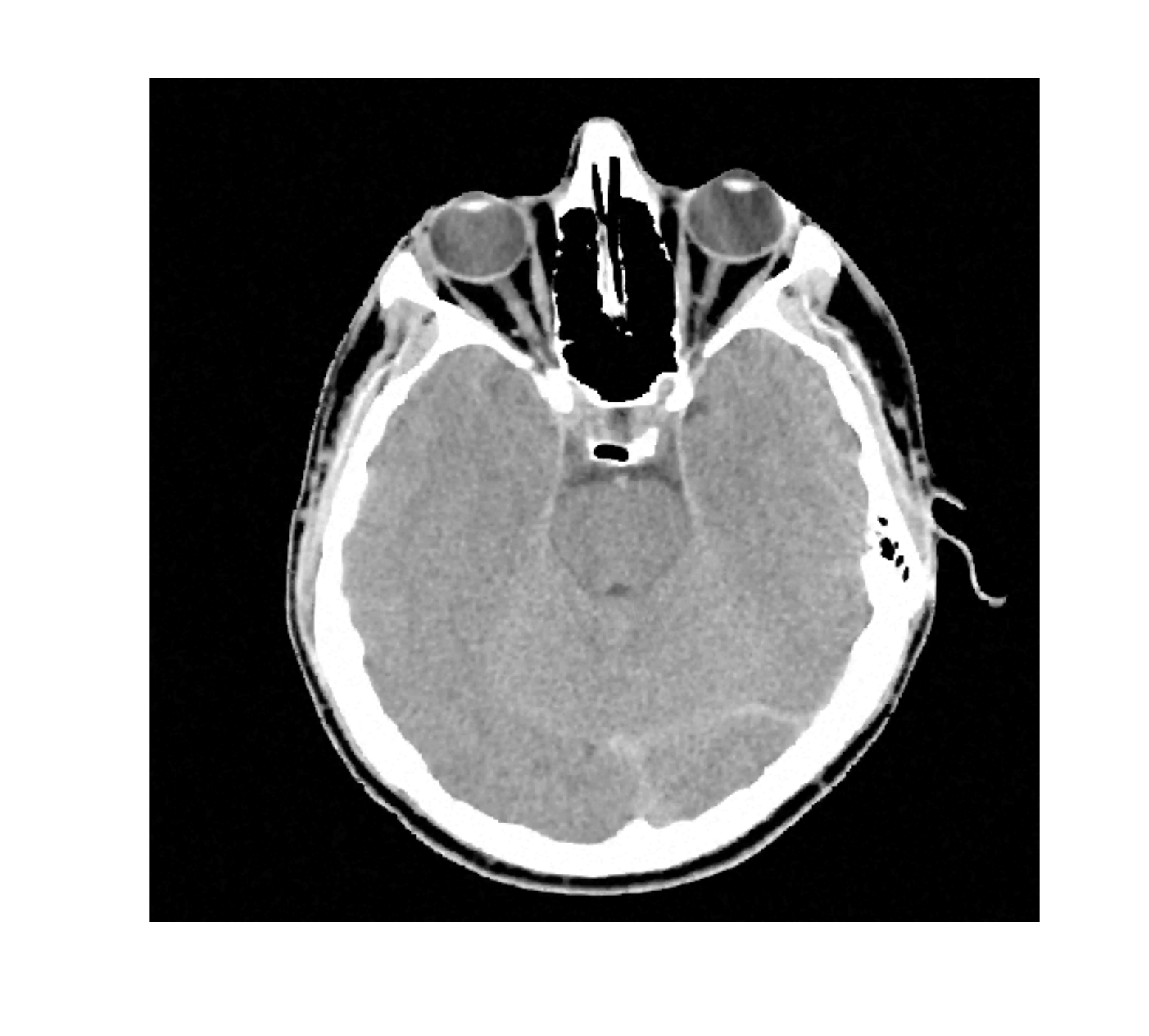}}
\qquad
\subfloat[][FISTA ($chit = 10$): $\mathrm{F} = 189145.34$, $\mathrm{PSNR} = 30.36$]{\includegraphics[width=4.9cm]{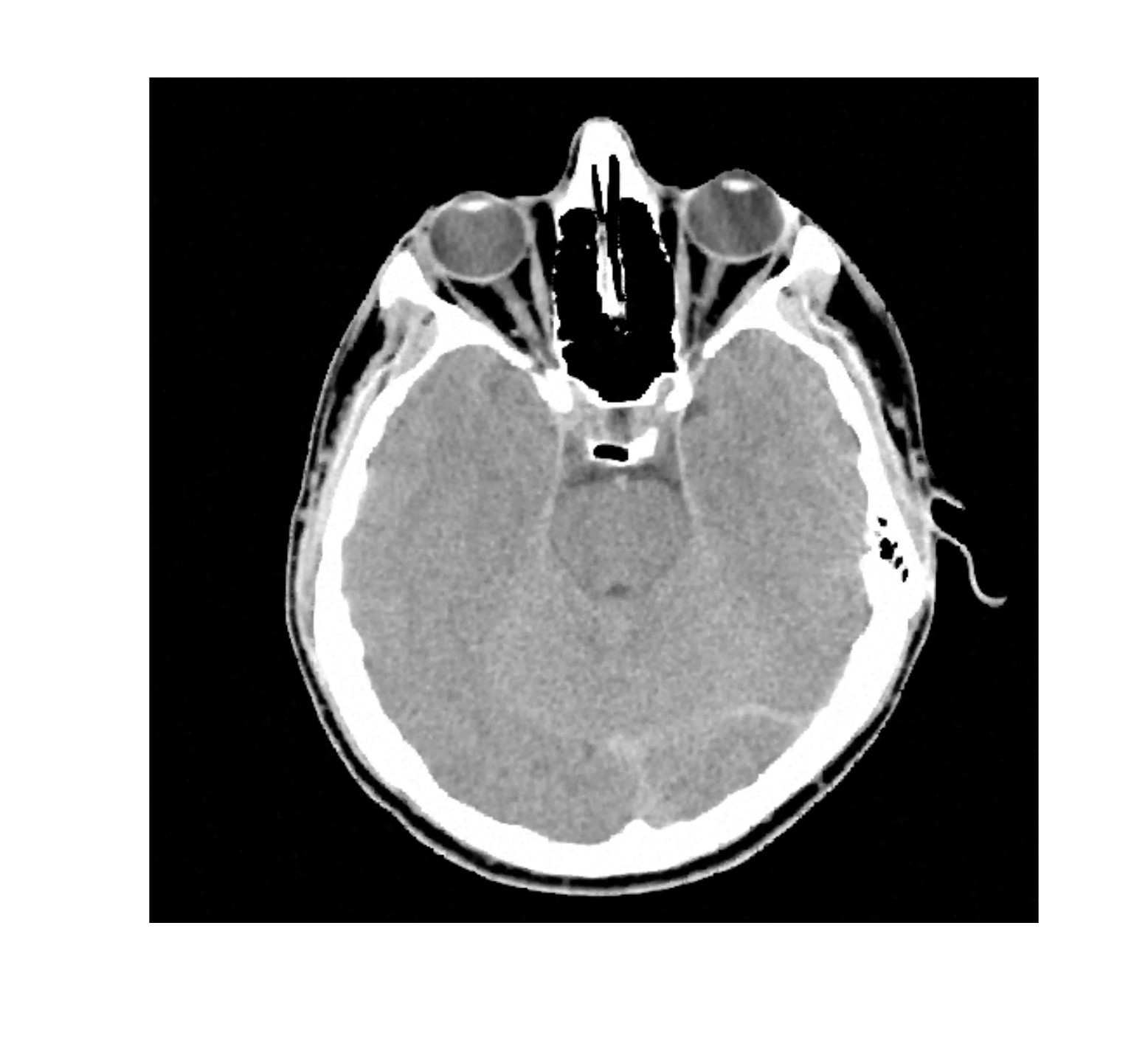}}%
\qquad
\subfloat[][FISTA ($chit = 20$): $\mathrm{F} = 198692.14$ \& $\mathrm{PSNR} = 30.24$]{\includegraphics[width=4.9cm]{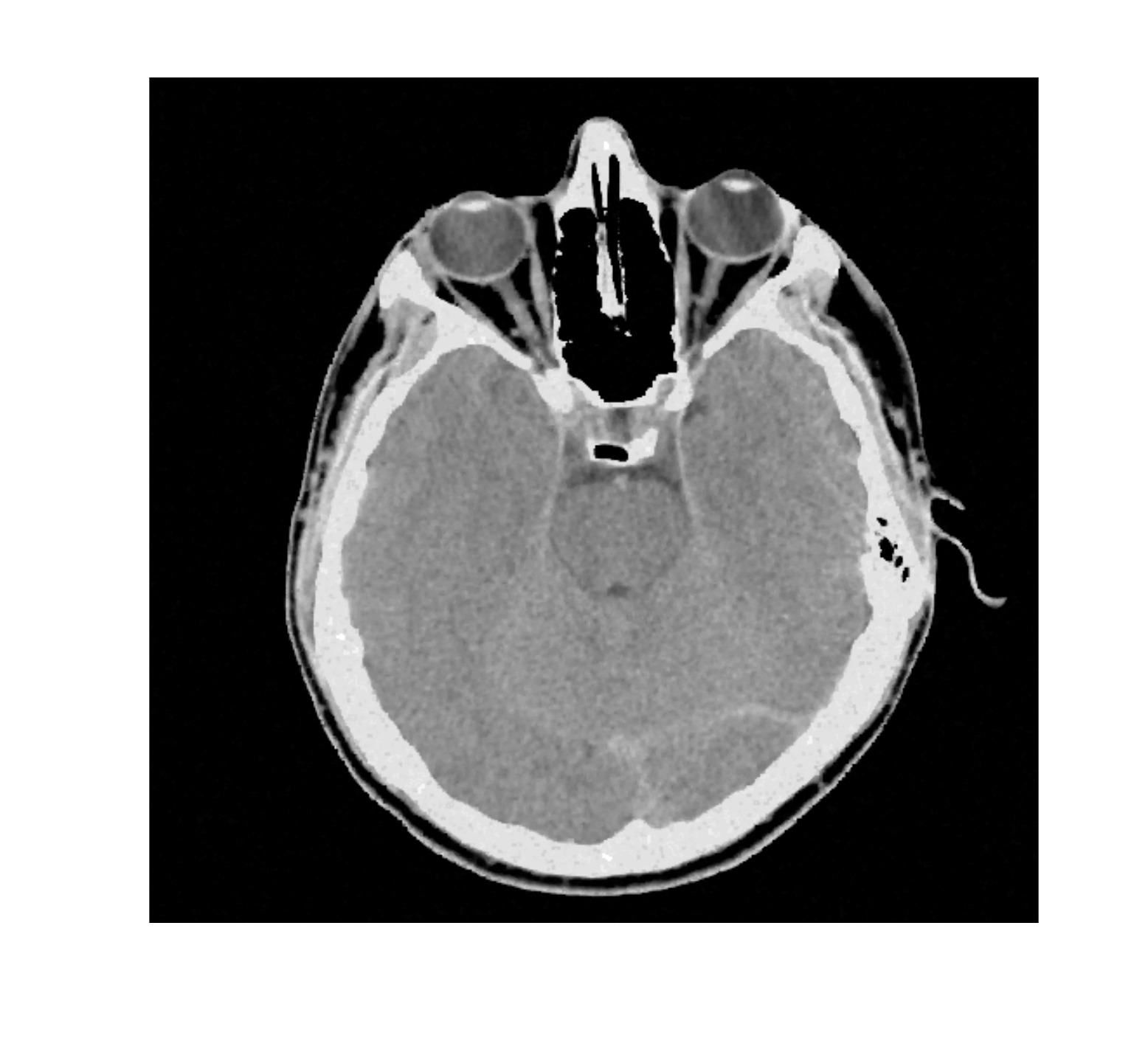}}

\caption{Inpainting of the $512 \times 512$ Head CT image by IST, TwIST, FISTA, OSGA when they stopped after 50 seconds of the running time.}%
\label{fig:cont}
\end{figure}

Similar to denoising, Figure 3 reports comparisons with some relative errors for step points and function values and also ISNR. These results display that IST is not comparable with others, and also OSGA is superior to IST, TwIST and FISTA regarding all considered measures. Since the data is not scaled in the implementation, PSNR is computed by 
\[
\mathrm{PSNR} = 20 \log_{10} \left( \frac{255\sqrt{mn}}{\|X - X_0\|_F} \right),
\]
where $X_0$ is the clean image. The definition clearly proposes that a bigger PSNR value means a smaller $\|X - X_0\|_F$, suggesting a better quality for the restored image. Conversely, a smaller PSNR value indicates a bigger $\|X - X_0\|_F$ implying the worse image quality. Figure 5 shows that IST cannot recover the missing data effectively compared with other algorithms. Moreover, the results suggest that OSGA achieves the best function value 187564.02 and the best PSNR 33.13 among the solvers considered.

\subsubsection{Deblurring with isotropic total variation}\label{s.deb}
Image blur is one of the most common problem which happens in the photography and can often ruin the photograph. In digital photography, the motion blur is caused by camera shakes, which is difficult to avoid in many situations. Deblurring has been a fundamental research problem in the digital imaging and can be considered as an inverse problem of the form (\ref{e.inv2}). The same as other inverse problems, this is an inherently ill-conditioned problem leading to the optimization problem of the form (\ref{e.genf}) or (\ref{e.genf1}). Deblurring is generally much harder than denoising because most of algorithms that have been developed for solving this problem need to solve a subproblem of the form (\ref{e.pro}) in each iteration. 

Here, the deblurring problem of the form (\ref{e.TV}) with the isotropic total variation regularizer is considered. As discussed about denoising and inpainting, IST cannot recover images efficiently and is not comparable with the other algorithms considered. Hence, we instead take advantage of the more sophisticated package SpaRSA \cite{WriNF} in our implementations, so the comparison for deblurring involves TwIST, SpaRSA, FISTA and OSGA. Here, the regularization parameter set to $\lambda = 0.05$. 

We now consider deblurring of the $512 \times 512$ blurred/noisy Elaine image. Let $Y$ be a blurred/noisy version of this image generated by $9 \times 9$ uniform blur and adding Gaussian noise with $\mathrm{SNR} = 40$ dB. We recover it using (\ref{e.TV}) by the algorithms in 25 seconds of the running time. The results are demonstrated in Figures 5 and 6. In Figure 5, the algorithms are compared regarding relative errors for step points and function values and also ISNR. In Figures 5, subfigures (a)-(b) indicate that OSGA obtains a better level of accuracy regarding both of these measures, while Figure 5 (d) suggests that OSGA is superior to the others regarding ISNR. To see the constructed results visually, Figure 6 is considered, where it reports last function value and PSNR of the algorithms. It can be seen that OSGA obtains the best function value 79252.60 and the best PSNR 32.23 among the considered algorithms. 

\begin{figure}[h] \label{f.deb1}
\centering
\subfloat[][$rel.~ 1~ vs.~ iterations,~ chit = 5$]{\includegraphics[width=7.7cm]{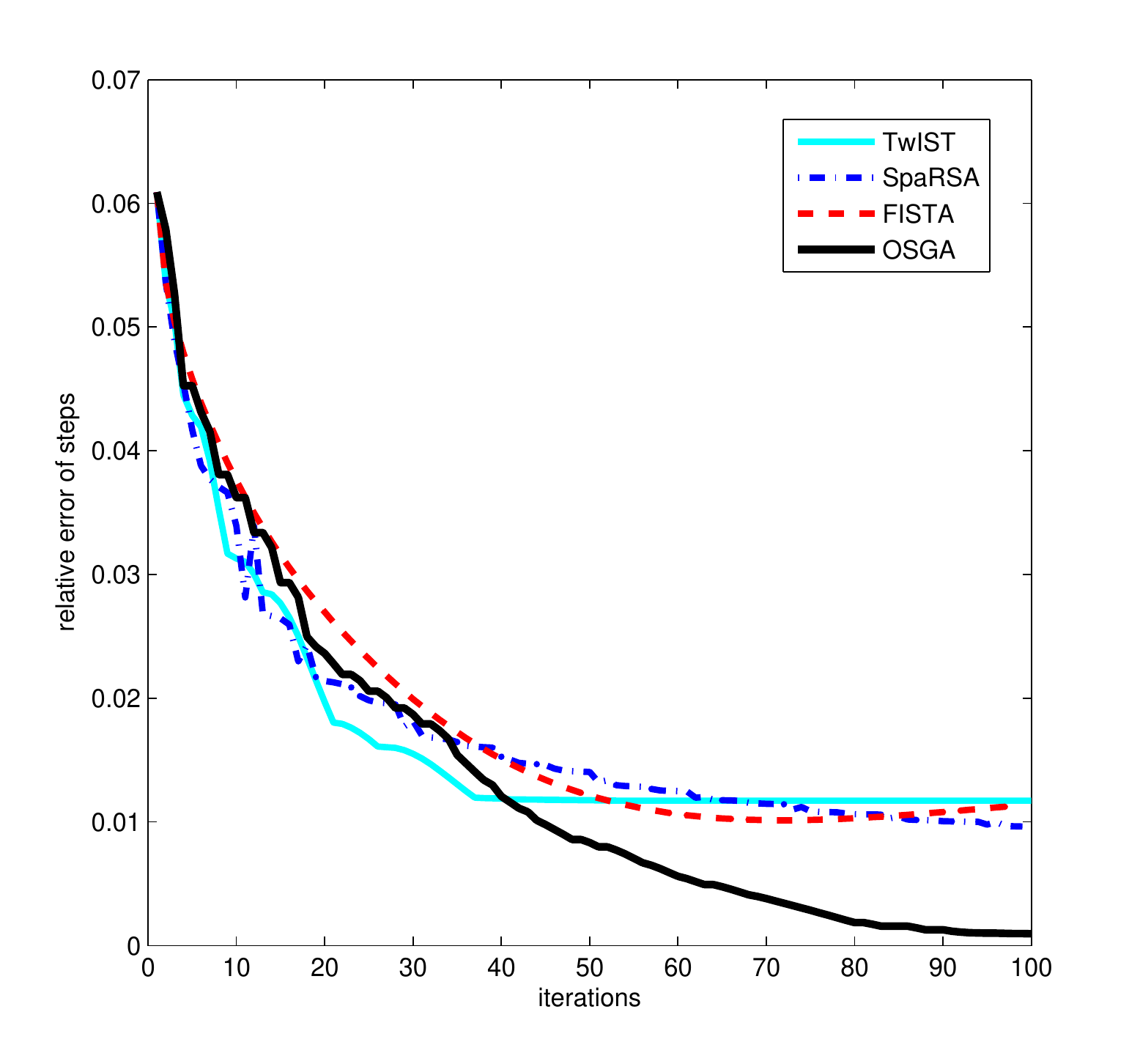}}%
\qquad
\subfloat[][$rel.~ 2~ vs.~ time,~ chit = 5$]{\includegraphics[width=7.7cm]{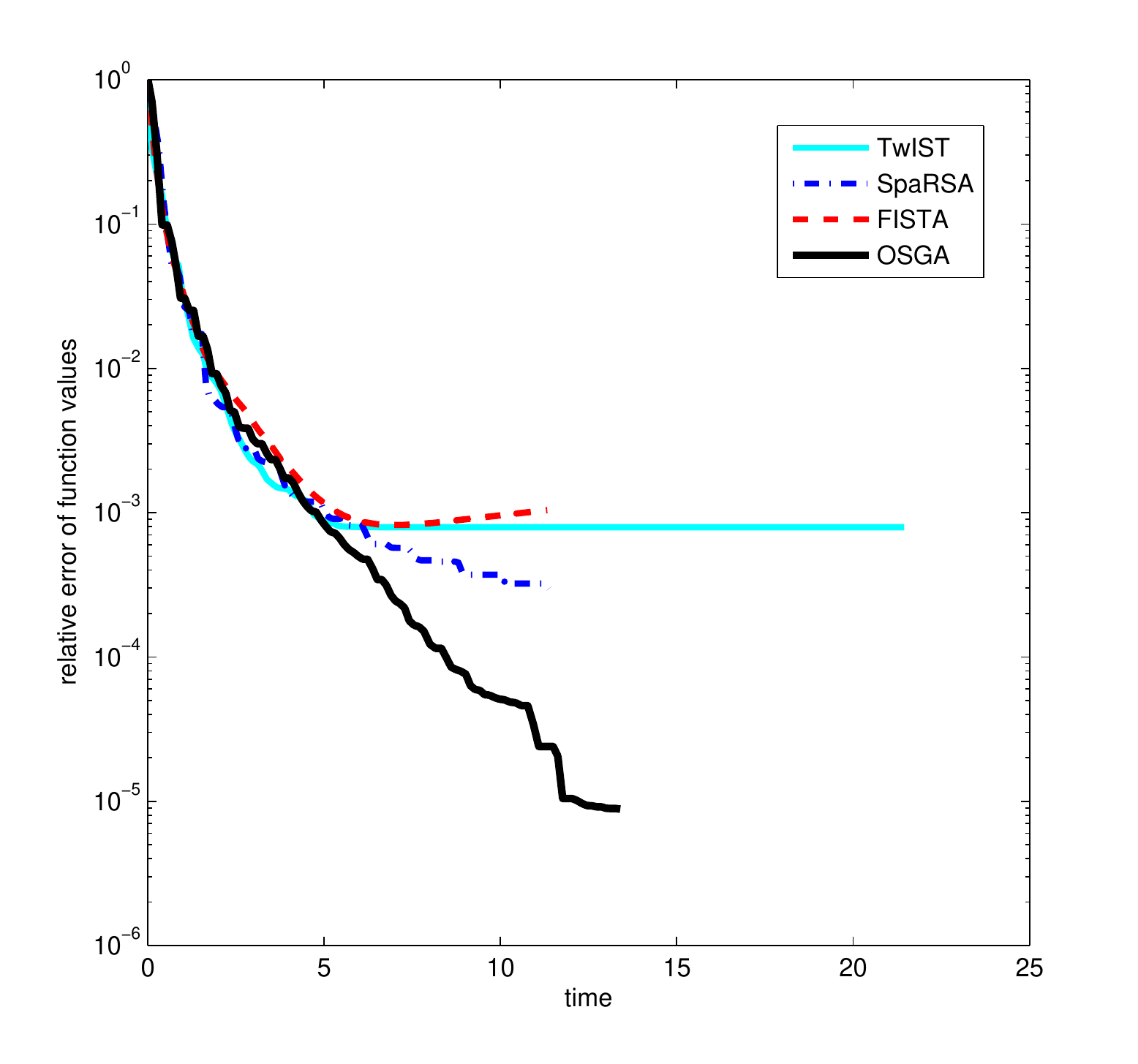}}
\qquad
\subfloat[][$rel.~ 2~ vs.~ iterations,~ chit = 5$]{\includegraphics[width=7.7cm]{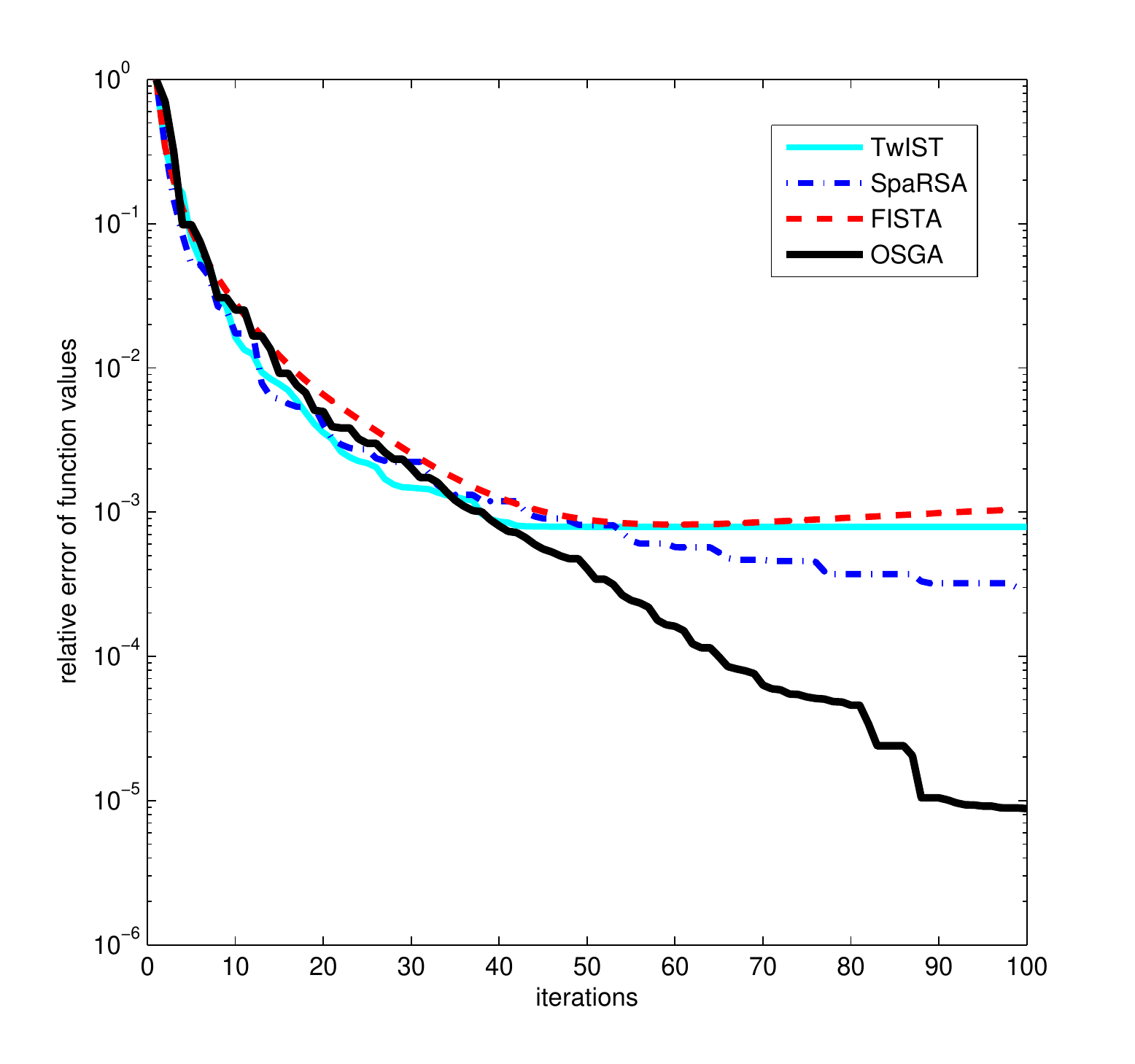}}%
\qquad
\subfloat[][$ISNR~ vs.~ iterations,~ chit = 5$]{\includegraphics[width=7.7cm]{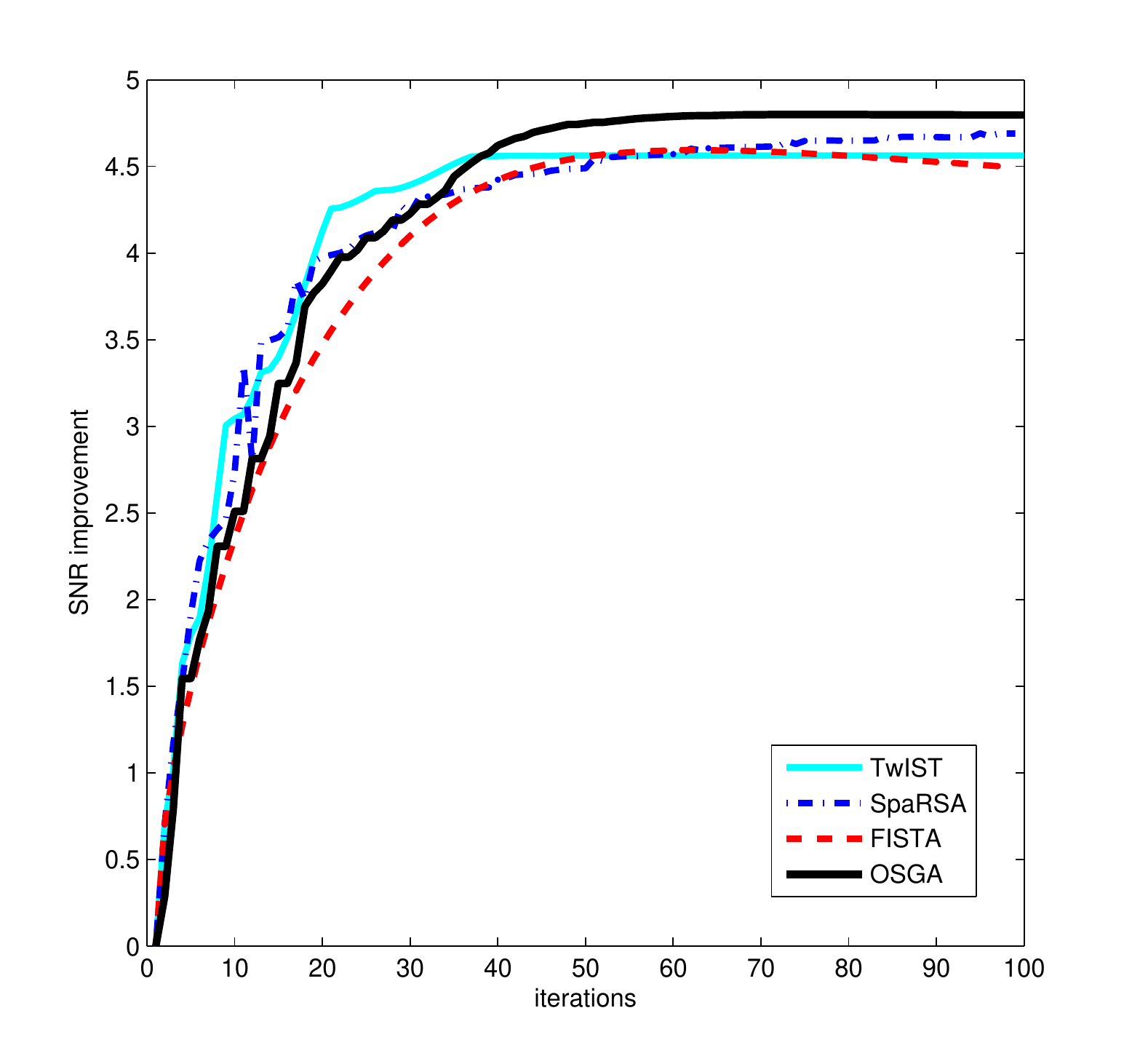}}

\caption{A comparison among TwIST, SpaRSA, FISTA and OSGA for deblurring the $512 \times 512$ Elaine image with $9 \times 9$ uniform blur and  Gaussian noise with $\mathrm{SNR} = 40$ dB. TwIST, SpaRSA and FISTA exploit $chit = 5$, and all algorithms are stopped after 100 iterations. In the figure: (a) stands for the relative error $rel.~ 1 = \|x_k - x^*\|_2/\|x^*\|_2$ of points versus iterations, (b) shows the relative error  $rel.~ 2 = (f_k - f^*)/(f_0 - f^*)$ of function values versus time, (c) demonstrates $rel. 2$ versus iterations, and (d) depicts ISNR (\ref{e.isnr}) versus iterations.}%
\label{fig:cont}%
\end{figure}

\begin{figure}[H] \label{f.deb2}
\centering
\subfloat[][Original image]{\includegraphics[width=7.5cm]{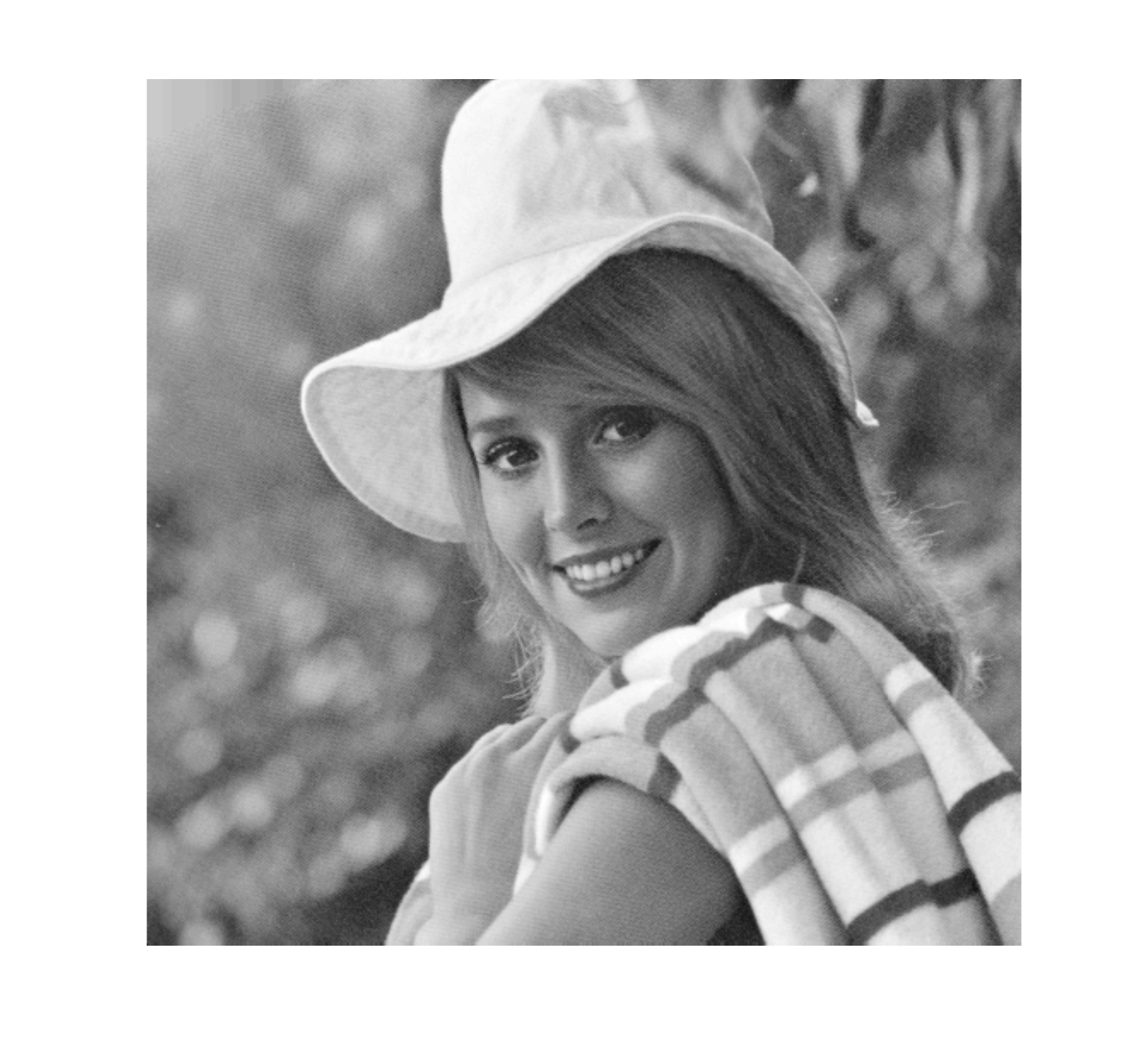}}%
\qquad
\subfloat[][Blurred/noisy image]{\includegraphics[width=7.5cm]{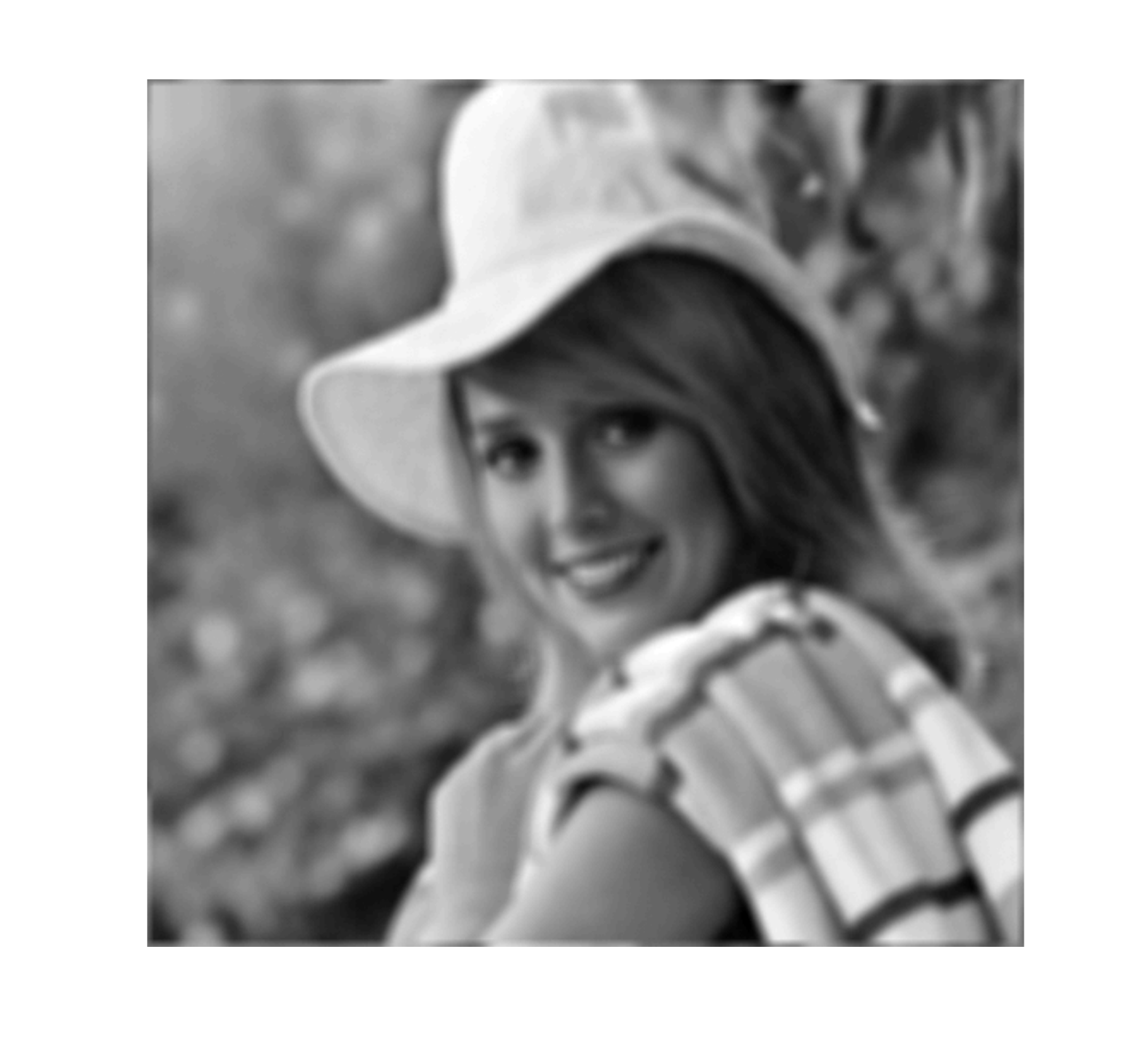}}
\qquad
\subfloat[][TwIST: $f = 80733.24,~ \mathrm{PSNR} = 31.99,~ \mathrm{T} = 20.07$]{\includegraphics[width=7.5cm]{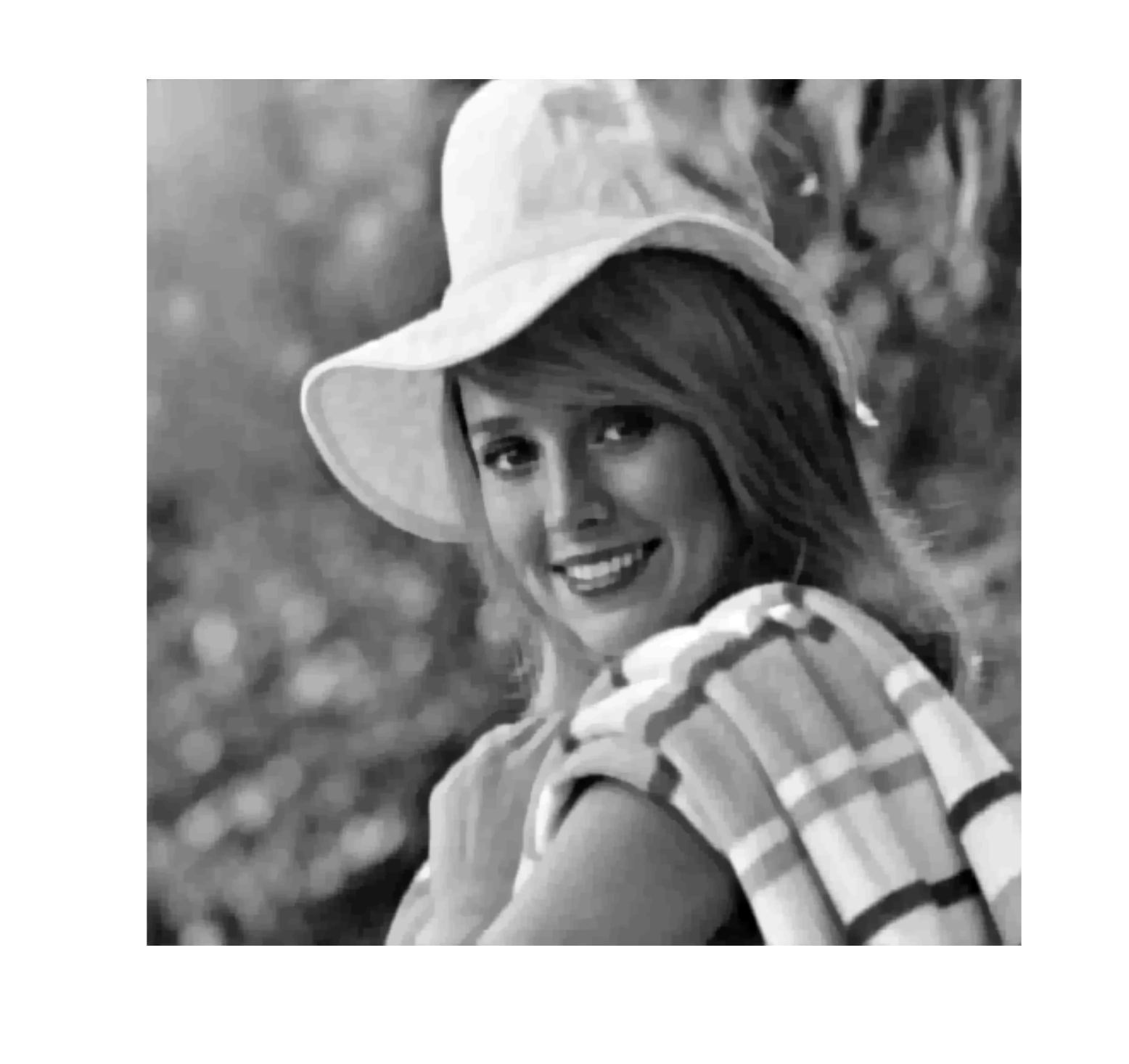}}%
\qquad
\subfloat[][SpaRSA: $f = 79698.36,~ \mathrm{PSNR} = 32.12,~ \mathrm{T} = 12.16$]{\includegraphics[width=7.5cm]{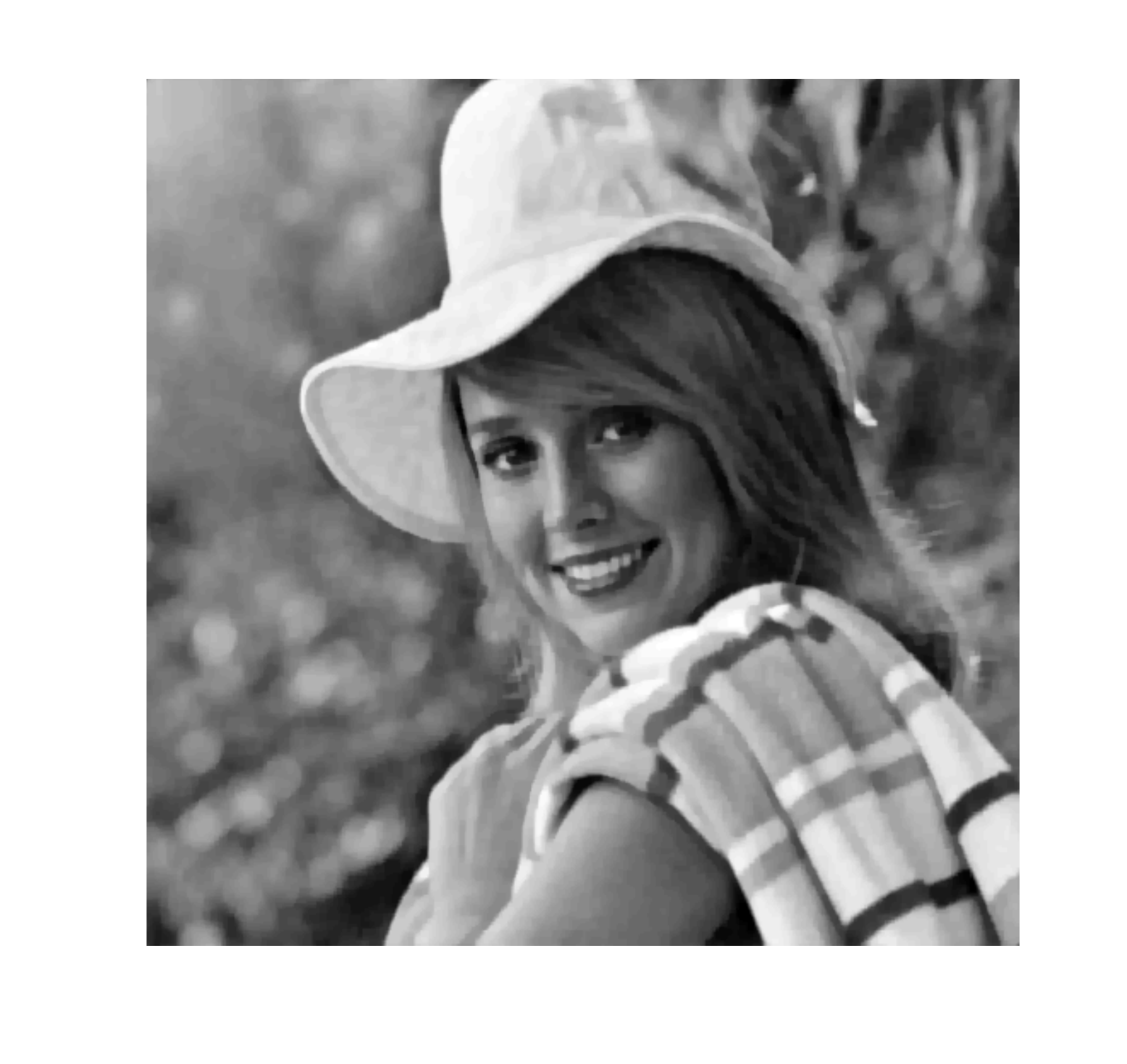}}
\qquad
\subfloat[][FISTA: $f = 80795.59,~ \mathrm{PSNR} = 31.92,~ \mathrm{T} = 11.84$]{\includegraphics[width=7.5cm]{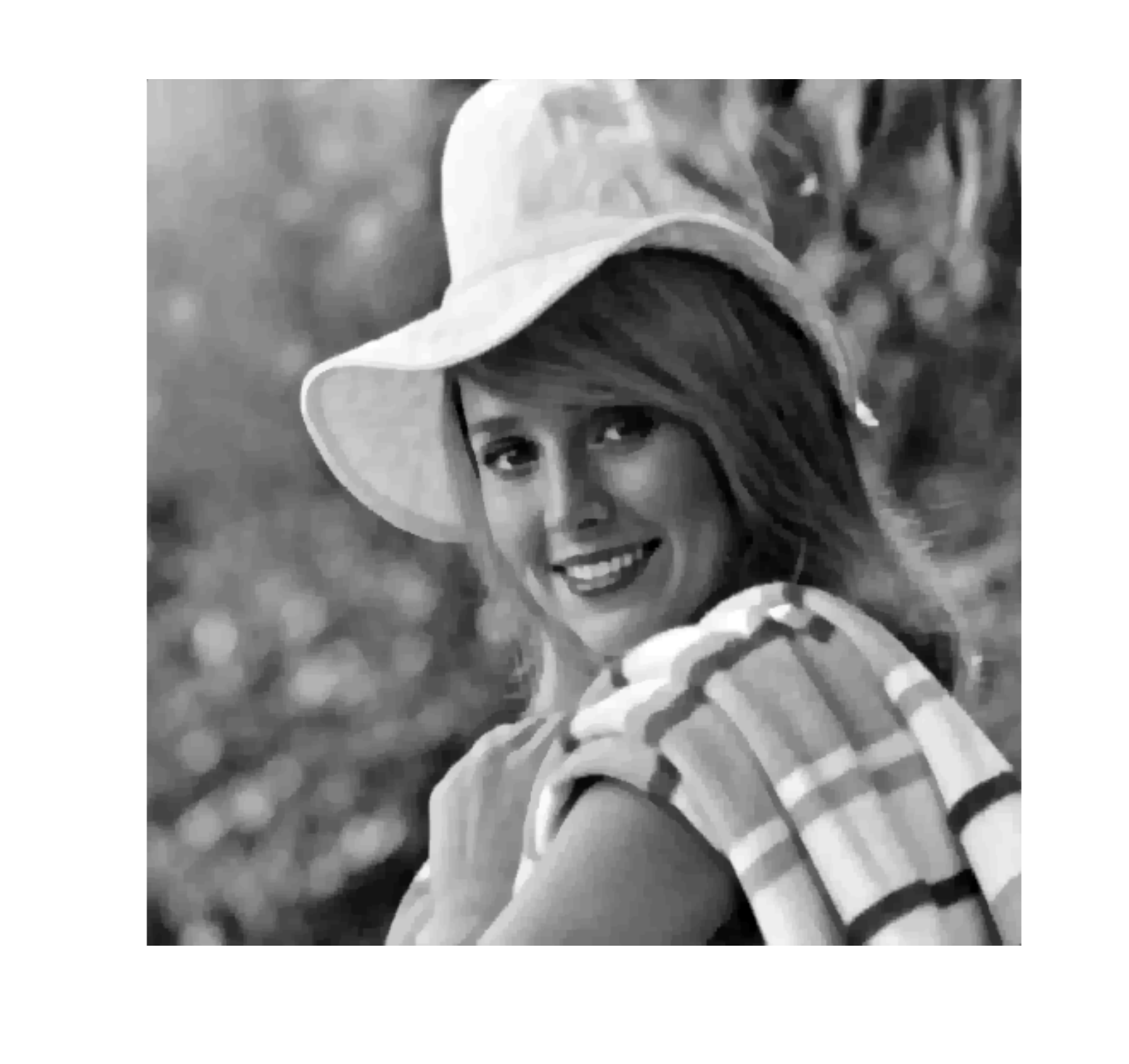}}%
\qquad
\subfloat[][OSGA: $f = 79252.60,~ \mathrm{PSNR} = 32.23,~ \mathrm{T} = 14.82$]{\includegraphics[width=7.5cm]{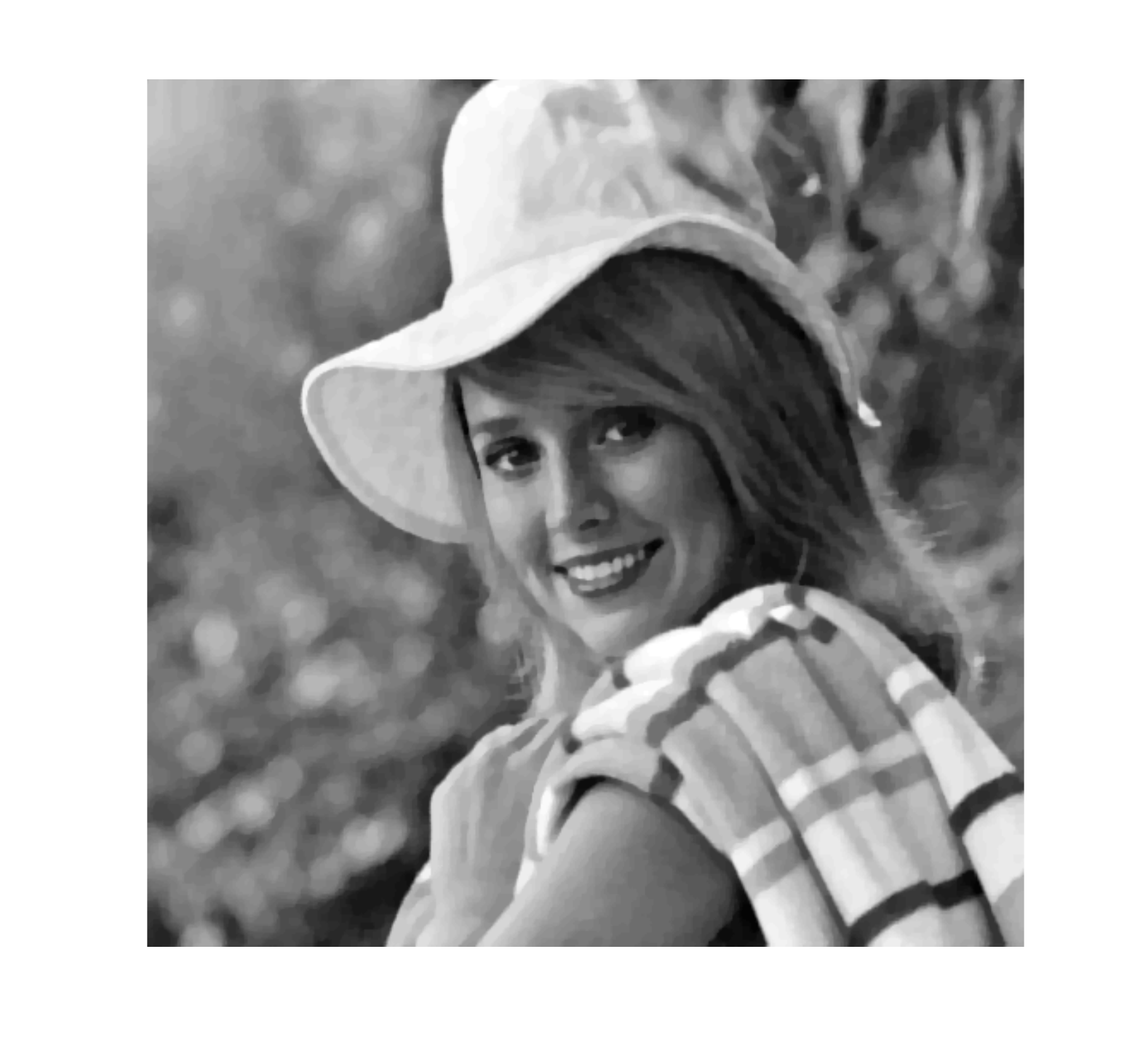}}
\caption{Deblurring of the $512 \times 512$ Elaine image with $9 \times 9$ uniform blur and  Gaussian noise with $\mathrm{SNR} = 40$ dB by TwIST, SpaRSA, FISTA and OSGA. The algorithms stopped after 100 iterations, and the number of Chambolle iterations for TwIST, SpaRSA and FISTA is set to $chit = 5$.}%
\label{fig:cont}
\end{figure}

We finally consider 72 widely used test images to do more comparisons, where blurred/noisy images are generated by $9 \times 9$ uniform blur and adding Gaussian noise with $\mathrm{SNR} = 40$ dB. The images recovered by TwIST, SpaRSA, FISTA and OSGA, where the algorithm are stopped after 100 iterations. The results of reconstructions are summarized in Table 1, where  we report the best function value (F), PSNR and CPU time (T) suggesting OSGA outperforms the others. The results are illustrated in Figure 7, where the comparisons are based on the performance profile of Dolan and Mor\'{e} in \cite{DolM}. Figure 7 shows that OSGA wins with 84\% and 93\% score for function values and PSNR among all others. It also indicates that all algorithms recover images successfully.

\begin{figure}[H] \label{f.deb3}
\centering
\subfloat[][performance profile for function values]{\includegraphics[width=12cm]{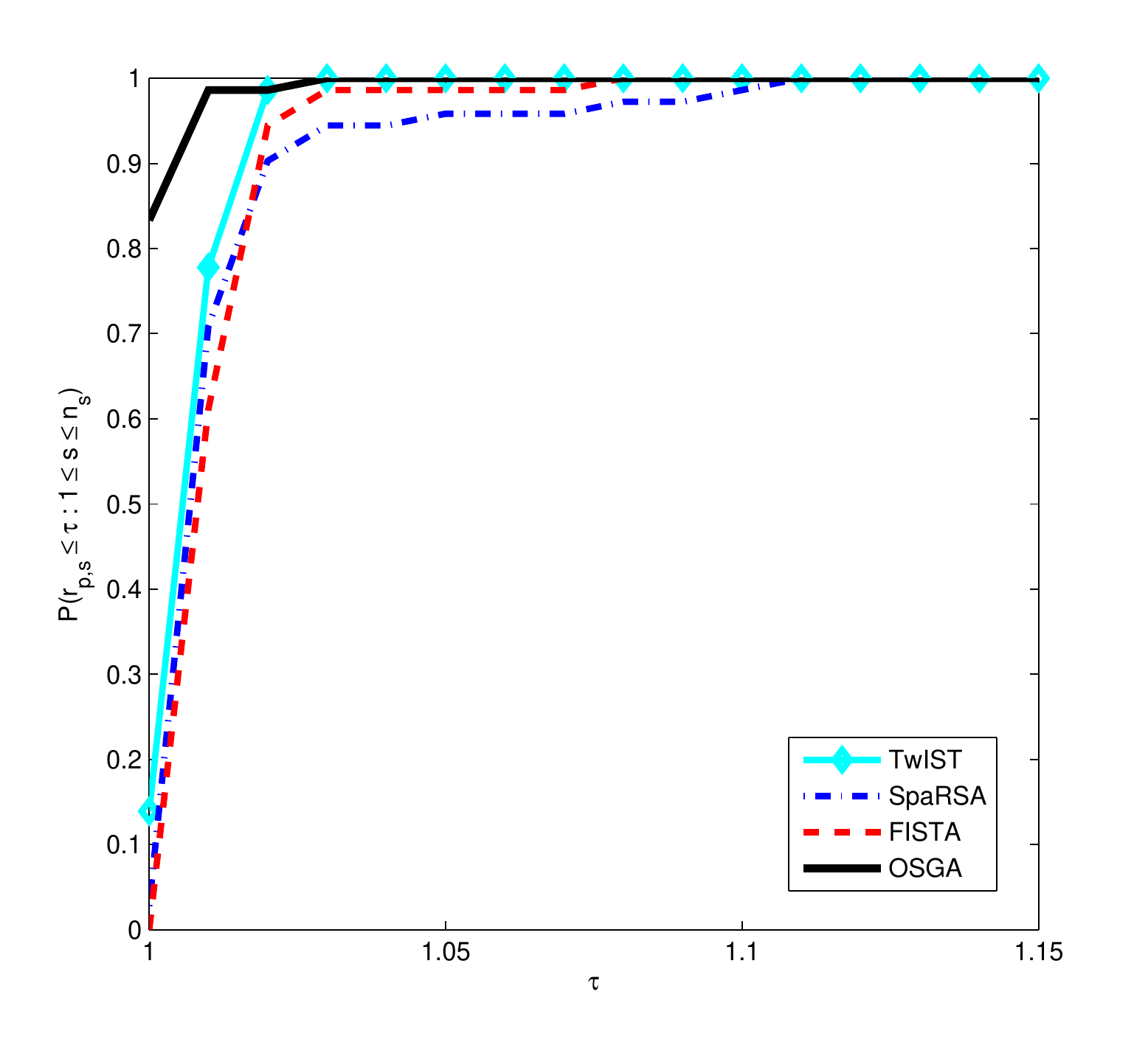}}%
\qquad
\subfloat[][performance profile for PSNR]{\includegraphics[width=12cm]{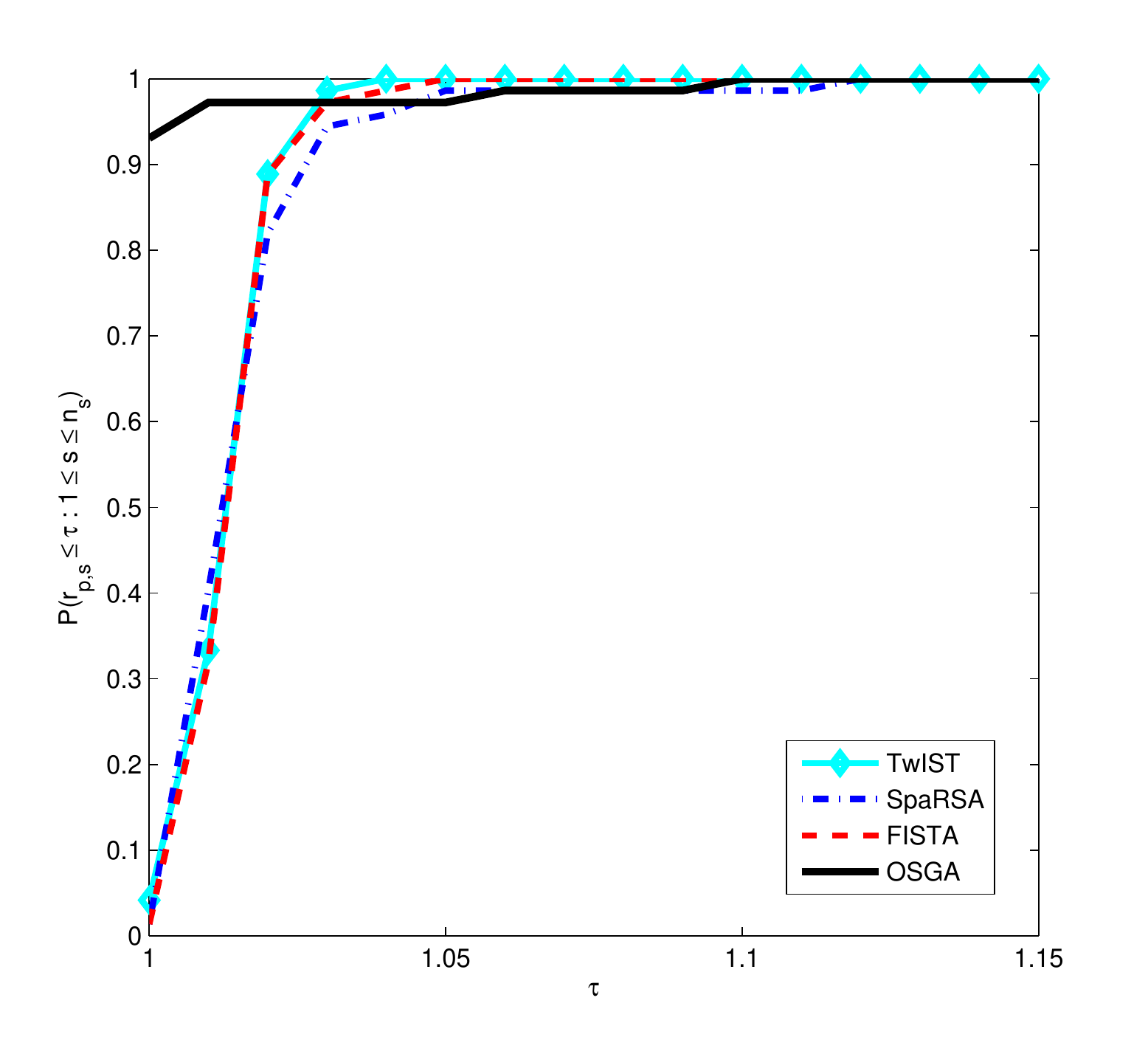}}

\caption{Deblurring of 72 test images with isotropic total variation using TwIST, SpaRSA FISTA, OSGA in 25 seconds of the running time.}%
\label{fig:cont}%
\end{figure}

\renewcommand{\arraystretch}{1.2}
\begin{landscape}
\small
\begin{longtable}{llllllllllllllll}
\multicolumn{5}{l}
{Table 1. Deblurring with isotropic TV ($\mathrm{F}$, $\mathrm{T}$ \& $\mathrm{PSNR}$)}\\
\cmidrule{1-14} 

\multicolumn{1}{l}{}        &\multicolumn{1}{l}{} 
&\multicolumn{1}{l}{TwIST}  &\multicolumn{1}{l}{} &\multicolumn{1}{l}{}\hspace{10mm}
&\multicolumn{1}{l}{SpaRSA} &\multicolumn{1}{l}{} &\multicolumn{1}{l}{}\hspace{10mm}
&\multicolumn{1}{l}{FISTA}  &\multicolumn{1}{l}{} &\multicolumn{1}{l}{}\hspace{10mm}
&\multicolumn{1}{l}{OSGA}   &\multicolumn{1}{l}{} &\multicolumn{1}{l}{}\\
\cmidrule(lr){3-5} \cmidrule(lr){6-8} \cmidrule(lr){9-11} \cmidrule(lr){12-14}

\multicolumn{1}{l}{Image name}      &\multicolumn{1}{l}{Dimension}

&\multicolumn{1}{l}{$\mathrm{F}$}   &\multicolumn{1}{l}{$\mathrm{PSNR}$} &\multicolumn{1}{l}{$\mathrm{T}$}
&\multicolumn{1}{l}{$\mathrm{F}$}   &\multicolumn{1}{l}{$\mathrm{PSNR}$} &\multicolumn{1}{l}{$\mathrm{T}$}
&\multicolumn{1}{l}{$\mathrm{F}$}   &\multicolumn{1}{l}{$\mathrm{PSNR}$} &\multicolumn{1}{l}{$\mathrm{T}$}
&\multicolumn{1}{l}{$\mathrm{F}$}   &\multicolumn{1}{l}{$\mathrm{PSNR}$} &\multicolumn{1}{l}{$\mathrm{T}$} \\
\cmidrule{1-14}
\endfirsthead
\multicolumn{5}{l}%
{Table 1. Deblurring results ($\mathrm{F}$, $\mathrm{T}$ \& $\mathrm{PSNR}$) (\textit{continued})}\\[5pt]
\hline
\endhead
\hline
\endfoot
\endlastfoot
Aerial              & $256 \times 256$ &53380.99&24.99&5.80&52966.30&25.14&3.56&53513.56&25.10&3.33&52832.73&25.66&3.72\\
Airplane            & $256 \times 256$ &16527.67&33.75&6.50&16563.06&33.72&3.39&16549.09&33.69&3.28&16630.05&34.36&3.62\\
Cameraman           & $256 \times 256$ &37077.60&27.39&4.94&37230.96&27.11&3.28&37096.87&27.44&3.28&37089.05&27.82&3.80\\
Chemical plant      & $256 \times 256$ &37890.81&26.80&6.64&37902.64&26.87&3.48&37934.72&26.80&3.32&37562.79&27.25&3.72\\
Clock               & $256 \times 256$ &31581.71&31.07&6.04&31740.38&30.95&4.12&31604.08&31.18&3.33&31502.30&31.64&4.28\\
Fingerprint 1       & $256 \times 256$ &54289.58&20.98&7.06&55070.52&21.12&3.68&54374.17&20.98&3.46&54256.89&21.22&4.23\\
Lena                & $256 \times 256$ &30700.61&28.33&7.98&30610.62&28.31&4.12&30730.93&28.39&3.37&30468.62&28.73&4.19\\
Moon surface        & $256 \times 256$ &16648.60&30.22&8.76&16552.11&30.29&4.39&16718.75&30.27&3.68&16505.01&30.54&4.67\\
Pattern 1           & $256 \times 256$ &231150.20&18.07&4.48&232285.41&17.85&4.32&231937.14&18.09&3.61&232025.68&18.17&4.13\\
Pattern 2           & $256 \times 256$ &176.53&48.90&7.27&190.83&48.92&3.86&186.27&48.90&3.41&172.69&48.89&4.26\\
Star                & $256 \times 256$ &23482.48&24.31&6.06&23596.24&24.26&4.04&23514.17&24.37&3.81&23370.80&24.49&4.37\\
Aerial              & $512 \times 512$ &155827.40&25.94&23.55&166304.12&25.93&17.01&156015.30&26.01&12.86&154008.47&26.51&16.53\\
Airfield            & $512 \times 512$ &169428.35&26.06&23.87&169301.74&25.97&16.91&169528.18&26.14&13.53&168303.91&26.63&17.68\\
Airplane 1          & $512 \times 512$ &105947.22&32.00&23.43&105942.26&32.06&14.29&106078.89&32.06&13.57&105076.82&32.70&18.18\\
Airplane 2          & $512 \times 512$ &29318.44&35.86&19.22&29609.95&35.14&14.83&29329.10&35.90&13.44&29285.75&35.79&18.10\\
APC                 & $512 \times 512$ &45314.07&33.18&24.09&45128.61&33.20&14.60&45425.04&33.22&13.48&44985.53&33.48&17.97\\
Barbara             & $512 \times 512$ &114725.54&25.45&27.10&114235.16&25.47&14.39&115004.02&25.43&13.84&113697.78&25.60&18.17\\
Blobs               & $512 \times 512$ &191940.70&20.63&24.31&191945.61&20.59&15.92&192059.89&20.63&14.03&192043.10&20.85&17.21\\
Blonde woman        & $512 \times 512$ &96330.41&29.63&30.11&95947.22&29.71&15.40&96567.05&29.65&13.66&95219.10&30.02&17.74\\
Boat                & $512 \times 512$ &118404.12&29.87&27.41&118194.91&29.82&16.12&118590.52&29.90&14.02&117624.44&30.33&16.84\\
Cameraman           & $512 \times 512$ &111579.37&32.98&29.51&111655.52&32.87&15.86&111656.63&33.00&13.30&110900.70&33.53&17.56\\
Car \& APCs 1       & $512 \times 512$ &75501.10&32.67&22.48&74826.32&32.90&13.88&75868.96&32.59&13.65&74614.10&32.98&17.24\\
Car \& APCs 2       & $512 \times 512$ &69193.23&32.52&30.35&68645.53&32.65&15.55&69636.88&32.43&13.84&68233.64&32.85&17.23\\
Clown               & $512 \times 512$ &137437.13&31.71&23.04&137241.10&31.77&13.31&137550.26&31.75&12.79&136517.93&32.12&16.65\\
Crowd 1             & $512 \times 512$ &127463.77&30.55&25.33&127563.32&30.72&13.58&127708.39&30.61&12.21&125715.43&31.33&15.61\\
Crowd 2             & $512 \times 512$ &213890.70&28.16&18.55&214814.30&28.35&12.29&214494.66&28.16&12.56&210665.21&29.00&15.37\\
Darkhair woman      & $512 \times 512$ &85530.72&37.31&24.19&85441.92&37.49&12.47&85854.12&36.96&13.09&84848.24&37.77&15.31\\
Dollar              & $512 \times 512$ &249508.76&21.49&18.39&273059.31&21.42&12.34&249686.20&21.50&12.52&248817.22&21.68&15.34\\
Elaine              & $512 \times 512$ &80406.78&32.01&20.28&79667.61&32.13&12.45&80765.32&31.94&14.08&79206.17&32.24&14.84\\
Fingerprint         & $512 \times 512$ &256043.16&26.09&22.58&256803.08&26.19&12.34&256811.98&25.85&12.66&253486.89&26.49&15.71\\
Flinstones          & $512 \times 512$ &245122.90&25.64&17.50&246272.55&25.45&12.31&245410.87&25.66&11.99&243375.64&26.47&15.50\\
Girlface            & $512 \times 512$ &82611.48&33.00&24.73&82842.10&33.06&12.18&82896.35&32.89&12.50&81871.62&33.44&15.63\\
Goldhill            & $512 \times 512$ &110067.92&30.69&23.78&109700.41&30.78&12.49&110273.58&30.71&12.27&109353.67&31.06&15.66\\
Head CT             & $512 \times 512$ &178265.55&30.91&16.10&179561.46&30.05&12.63&178400.71&30.93&11.94&177648.72&31.52&15.58\\
Houses              & $512 \times 512$ &250441.66&25.79&17.79&262200.38&25.69&12.47&250589.16&25.83&11.62&250375.51&25.97&15.74\\
Kiel                & $512 \times 512$ &155925.23&25.82&16.35&156039.11&25.76&12.83&156013.11&25.89&11.88&155847.17&26.01&15.81\\
Lake                & $512 \times 512$ &154117.24&28.88&24.72&154535.79&28.86&13.30&154292.14&28.95&13.10&152700.95&29.42&15.49\\
Lena                & $512 \times 512$ &91492.93&31.80&24.05&92149.44&31.78&12.50&91620.97&31.80&12.02&90544.16&32.27&14.87\\
Lena numbers        & $512 \times 512$ &258205.38&20.82&21.23&260738.06&20.57&15.70&258430.66&20.88&12.07&258452.85&20.92&15.51\\
Liftingbody         & $512 \times 512$ &43626.15&37.40&24.98&43713.46&37.24&14.06&43668.81&37.47&11.44&43420.69&37.69&15.68\\
Lighthouse          & $512 \times 512$ &169572.40&27.50&19.19&169527.68&27.44&14.50&169664.95&27.55&11.82&169081.03&27.72&15.55\\
Livingroom          & $512 \times 512$ &126804.63&29.83&24.84&126502.75&29.84&13.65&127007.96&29.89&11.82&126280.35&30.18&15.78\\
Mandril             & $512 \times 512$ &149477.52&25.90&23.63&150370.73&26.01&12.48&149889.46&25.88&12.82&148035.40&26.30&15.32\\
MRI spine           & $512 \times 512$ &85213.65&35.59&21.32&85179.01&35.61&13.12&85397.57&35.51&12.20&84823.91&36.10&15.75\\
Peppers             & $512 \times 512$ &101362.85&32.89&19.18&102312.47&32.86&13.45&101591.74&32.89&13.44&100470.74&33.44&15.50\\
Pirate              & $512 \times 512$ &108105.61&29.83&20.98&107788.51&29.91&14.87&108301.20&29.90&12.26&107122.20&30.31&15.69\\
Smiling woman       & $512 \times 512$ &110681.19&33.14&29.36&110432.93&33.17&13.95&110858.04&33.16&12.12&109880.54&33.65&15.66\\
Squares             & $512 \times 512$ &154177.76&50.49&17.53&154454.47&48.95&13.19&154228.55&50.21&12.22&154711.06&47.79&15.72\\
Tank 1              & $512 \times 512$ &63474.30&31.45&30.85&63215.81&31.48&14.99&63643.84&31.44&11.73&62876.00&31.70&15.71\\
Tank 2              & $512 \times 512$ &85867.65&29.39&23.93&85277.68&29.49&14.39&86445.51&29.38&11.89&84963.72&29.70&15.55\\
Tank 3              & $512 \times 512$ &93860.07&29.68&30.64&93348.41&29.77&15.40&94147.81&29.66&12.91&93238.90&29.96&15.79\\
Truck               & $512 \times 512$ &73437.12&31.78&23.54&73088.97&31.93&12.35&73768.38&31.77&12.25&72621.40&32.17&15.44\\
Truck \& APCs 1     & $512 \times 512$ &106703.13&28.69&27.71&106087.95&28.79&14.49&107119.20&28.69&12.18&105751.32&29.00&15.74\\
Truck \& APCs 2     & $512 \times 512$ &105112.40&28.74&28.27&104405.27&28.84&14.09&105581.13&28.71&12.44&104103.47&29.06&15.46\\
Washington DC 1     & $512 \times 512$ &255921.91&20.11&23.93&255818.43&20.03&12.68&256223.57&20.10&12.06&255585.39&20.39&15.78\\
Washington DC 2     & $512 \times 512$ &257153.15&20.06&27.90&256665.77&20.02&15.31&257413.32&20.08&12.52&256847.89&20.36&15.50\\
Washington DC 3     & $512 \times 512$ &261572.25&20.01&24.69&266971.93&20.03&12.64&261772.59&20.04&11.78&261064.29&20.33&15.74\\
Washington DC 4     & $512 \times 512$ &222243.70&20.81&22.17&220980.31&20.85&12.40&222443.50&20.82&11.82&222099.26&21.06&15.33\\
Zelda               & $512 \times 512$ &71117.95&35.27&26.02&71246.57&35.60&12.69&71547.26&35.08&12.10&70038.80&35.77&15.65\\
Dark blobs 1        & $600 \times 600$ &201150.09&23.93&29.71&201437.41&23.91&21.04&201282.62&23.95&18.03&201705.89&24.09&21.04\\
Dark blobs 2        & $600 \times 600$ &197734.24&23.85&27.26&198225.38&23.83&19.98&197847.53&23.86&17.43&198123.72&24.00&21.49\\
House               & $600 \times 600$ &129529.71&33.80&28.46&129835.46&33.82&17.79&129693.50&33.83&17.41&129342.37&33.92&21.52\\
Ordered matches     & $600 \times 600$ &208297.87&29.22&35.19&207720.39&29.30&19.67&208547.46&29.18&17.09&207095.69&29.40&21.63\\
Random matches      & $600 \times 600$ &219117.86&28.00&32.76&220633.95&28.08&17.94&219692.19&27.90&17.45&218101.17&28.30&21.35\\
Rice &92142.42      & $600 \times 600$ &40.12&31.82&92885.81&40.82&17.67&92779.78&39.56&17.34&90891.80&41.15&21.54\\
Shepp-logan phantom & $600 \times 600$ & 83431.60&35.32&21.89&85917.89&31.69&18.52&83775.26&34.38&17.22&85334.45&32.38&21.27\\
Airport             & $1024 \times 1024$ &423652.87&28.25&75.10&422352.54&28.25&53.99&424082.24&28.32&59.41&420651.39&28.63&59.71\\
Pentagon            & $1024 \times 1024$ &324683.85&28.48&97.22&323003.05&28.54&51.01&325787.77&28.51&47.50&320916.50&28.81&59.67\\
Pirate              & $1024 \times 1024$ &462035.52&30.22&88.40&460696.14&30.24&45.67&462855.60&30.23&47.90&457747.96&30.68&59.85\\
Rose                & $1024 \times 1024$ & 289127.64&37.12&86.41&289405.02&37.14&45.00&289515.55&36.99&48.80&288520.76&37.32&59.31\\
Testpat             & $1024 \times 1024$ &921586.14&21.89&56.31&927968.42&21.68&44.94&923132.76&21.88&48.89&927122.49&21.84&61.62\\
Washington DC       & $1024 \times 1024$ &816333.83&23.60&94.63&812073.87&23.67&49.28&817252.58&23.61&51.87&811272.73&23.95&63.54\\
\hline
Average             &  & 159823.01 & 29.36 & 26.54 & 160619.72 & 29.30 & 15.68 & 160106.63 & 29.34 & 14.86 & 159106.67 & 29.65 & 18.47 \\
    
\hline
\end{longtable}
\end{landscape}

\subsection{Comparisons with first-order methods} \label{s.opt}
As in the first section discussed, there is an excessive gap between the optimal complexity of first-order methods for solving smooth and nonsmooth convex problems to get an $\varepsilon$-solution, where it is $O(1/\sqrt{\varepsilon})$ for smooth problems and $O(1/\varepsilon^2)$ for nonsmooth ones. Inspired by this fact, we consider problems of both cases in the form (\ref{e.genf1}) and compare the best-known first-order algorithms with OSGA.

In spite of the fact that some first-order methods use smoothing techniques to solve nonsmooth problems \cite{BotH1, BotH2, Nes.Sm}, we only consider those that directly solve the original problem. In the other words, we only consider algorithms exploiting the nonsmooth first-order black-box oracle or employing the proximity operators along with the smooth first-order black-box oracle. Hence, during our implementations, the following algorithms are considered
\begin{itemize}
\item PGA  : Proximal gradient algorithm \cite{ParB};
\item NSDSG: Nonsummable diminishing subgradient algorithm \cite{BoyXM};
\item FISTA: Beck and Tebolle's fast proximal gradient algorithm \cite{BecT2};
\item NESCO: Nesterov's composite optimal algorithm \cite{Nes.Co};
\item NESUN: Nesterov's universal gradient algorithm \cite{Nes.Un};
\item NES83: Nesterov's 1983 optimal algorithm \cite{Nes.83};
\item NESCS: Nesterov's constant step optimal algorithm \cite{Nes.Bo};
\item NES05: Nesterov's 2005 optimal algorithm \cite{Nes.Sm}.
\end{itemize}  

The algorithms NES83, NESCS and NES05 originally proposed to solve smooth convex problems, where they use the smooth first-order black-box oracle and attain the optimal complexity of order $O(1/\sqrt{\varepsilon})$. Although, obtaining the optimal complexity bound of smooth problems is computationally very interesting, the considered problem (\ref{e.genf1}) is commonly nonsmooth. Recently, {\sc Lewis} and {\sc Overton} in \cite{LewO} investigated the behaviour of the BFGS method for nonsmooth nonconvex problems, where their results are interesting. Furthermore, {\sc Nesterov} in \cite{Nes.Co} shows that the subgradient of composite functions preserve all important properties of the gradient of smooth convex functions. These facts, motivate us to do some computational investigations on the behaviour of optimal smooth first-order methods in the presence of the nonsmooth first-order black-box oracle, where the oracle passes a subgradient of composite objective functions instead of the gradient of smooth parts. In particular, we here consider Nesterov's 1983 algorithm, \cite{Nes.83}, and adapt it to solve (\ref{e.genf1}) by simply preparing the subgradient of the functions as follows:\\

\begin{algorithm}[H]
\DontPrintSemicolon 
\KwIn{select $z$ such that $z \neq y_0$ and $g_{y_0} \neq g_z$;~ local parameters: $y_0$;~ $\eps>0$;}
\KwOut{$x_k$; $\Psi_{x_k}$;}
\Begin{
    $a_0 \leftarrow 0$;~ $x_{-1} \leftarrow y_0$;\;
    compute $g_{y_0} = g_{\Psi}(y_0)$ using NFO-G;\;
    compute $g_{z} = g_{\Psi}(z)$ using NFO-G;\;
    $\alpha_{-1} \leftarrow \|y_0 - z\| / \|g_{y_0} - g_z \|$;\;
    \While {stopping criteria do not hold}{
        $\hat{\alpha}_k \leftarrow \alpha_{k-1}$;\;
        $\hat{x}_k \leftarrow y_k - \hat{\alpha}_k g_{y_k}$;\;
        compute $\Psi_{\hat{x}_k}$ using NFO-F;\;
        \While {$\Psi_{\hat{x}_k} < \Psi_{y_k} - \frac{1}{2} \hat{\alpha}_k \|g_{y_k}\|^2$}{
            $\hat{\alpha}_k \leftarrow \rho \hat{\alpha}_k$;\;
            $\hat{x}_k \leftarrow y_k - \hat{\alpha}_k g_{y_k}$;\;
            compute $\Psi_{\hat{x}_k}$ using NFO-F;\;
        }        
        $x_k \leftarrow \hat{x}_k$;~ $\Psi_{x_k} \leftarrow \Psi_{\hat{x}_k}$;~ $\alpha_k \leftarrow \hat{\alpha}_k$;\;
        $a_{k+1} \leftarrow \left(1 + \sqrt{4a_k^2 + 1} \right) / 2$;\;
        $y_{k + 1} \leftarrow x_k + (a_k - 1)(x_k - x_{k-1})/a_{k + 1}$;\;
        compute $g_{y_{k+1}} = g_{\Psi}(y_{k+1})$ using NFO-G;\;
    }
}
\caption{ {\sc NES83} Nesterov's 1983 algorithm for multi-term affine composite functions}
\label{algo:max}
\end{algorithm}

In a similar way considered for NES83, the algorithms NESCS and NES05 are respectively adapted from {\sc Nesterov}'s constant step \cite{Nes.Bo} and {\sc Nesterov}'s 2005 algorithms \cite{Nes.Sm}. Using this technique, NES83, NESCS and NES05 are able to solve nonsmooth problems as well, however there is no theory to support their convergence at the moment.  

Among the algorithms considered, PGA, NSDSG, FISTA, NESCO and NESUN are originally introduced to solve nonsmooth problems, where NSDSG only employs the nonsmooth first-order black-box oracle and the others use the smooth first-order black-box oracle together with the proximal mapping. In the sense of complexity theory, NSDSG is just optimal for nonsmooth problems with the order $O(1/\varepsilon^2)$, PGA is not optimal with the complexity of the order $O(1/\varepsilon)$, and FISTA, NESCO and NESUN are optimal with the order $O(1/\sqrt{\varepsilon})$. These complexity bounds clearly imply that NSDSG and PGA are not theoretically comparable with the others, however we still consider them in our comparisons to see how much the optimal first-order methods are computationally interesting than the classical first-order methods, especially for solving large-scale problems in applications. 

To get a clear assessment of OSGA, we compare it with the algorithms only using the nonsmooth first-order oracle (NSDSG, NES83, NESCS, NES05) and with those employing the smooth first-order oracle together with the proximal mapping (PGA, NSDSG, FISTA, NESCO, NESUN) in separate comparisons. Most of these algorithms require the global Lipschitz constant $L$ to determine a step size. While NESCS, NES05, PGA and FISTA use the constant $L$ to determine a step size, NESCO and NESUN get a lower approximation of $L$ and adapt it by  backtracking line searches, for more details see \cite{Nes.Co} and \cite{Nes.Un}. In our implementations of NESCO and NESUN, similar to \cite{BecCG}, we determine the initial estimate $L_0$ by 
\[
L_0 = \frac{\|g(x_0) - g(z_0) \|_*}{\|x_0 - z_0\|},
\] 
where $x_0$ and $z_0$ are two arbitrary points such that $x_0 \neq z_0$ and $g(x_0) \neq g(z_0)$. In \cite{Nes.83}, {\sc Nesterov} used an equivalent scheme to determine an initial step size that can be seen in NES83. For NSDSG, the step size is calculated by $\alpha_0/\sqrt{k}$, where $k$ is the current iteration number and $\alpha_0$ is a positive constant that should be specified as big as possible such that the algorithm is not divergent, see \cite{BoyXM}. All the other parameters of the algorithms are set to those reported in the corresponding papers. 

We here consider solving an underdetermined system $A x = y$, where $A$ is a $m \times n$ random matrix and $y$ is a random $m$-vector. This problem is frequently appeared in applications, and the goal is to determine $x$ by optimization techniques. Considering the ill-conditioned feature of this problem, the most popular optimization models are (\ref{e.tikh}) and (\ref{e.BPD}), where (\ref{e.tikh}) is clearly smooth and (\ref{e.BPD}) is nonsmooth. In each case, we consider both dense and sparse data. More precisely, the dense data is constructed by setting $A = \mathsf{rand(m,n)}$, $y = \mathsf{rand(1,m)}$ and the initial point $x_0 = \mathsf{rand(1,n)}$, and the sparse data is created by $A = \mathsf{sprand(m,n,0.05)}$, $y = \mathsf{sprand(1,m,0.05)}$ and the initial point $x_0 = \mathsf{rand(1,n,0.05)}$. The problems use $m = 5000$ and $n = 10000$ and employ $\lambda = 1$ similar to \cite{Nes.Co}. For the dense problems, the algorithms are stopped after 60 seconds of the running time, and for the sparse problems, they are stopped after 45 seconds. Let to define 
\[
\hat{L} = \max_{1 \leq i \leq n} \|a_i\|^2,
\]
where $a_i$ for $i = 1, 2, \cdots, n$ are columns of $A$. For the dense and sparse problems, NESCS, NES05, PGA and FISTA respectively use $L = 10^4 \hat{L}$ and $L = 10^2 \hat{L}$. Moreover, NSDSG employs $\alpha_0 = 10^{-7}$ and $\alpha_0 = 10^{-4}$ for the dense and sparse problems, respectively. To assess the considered algorithms, we now solve (\ref{e.tikh}) and (\ref{e.BPD}) for both dense and sparse data, where the results are respectively illustrated in Figures 8 and 9.

\begin{figure}[h]\label{f.opt1}
\centering
\subfloat[][Function values vs. iterations ]{\includegraphics[width=7.7cm]{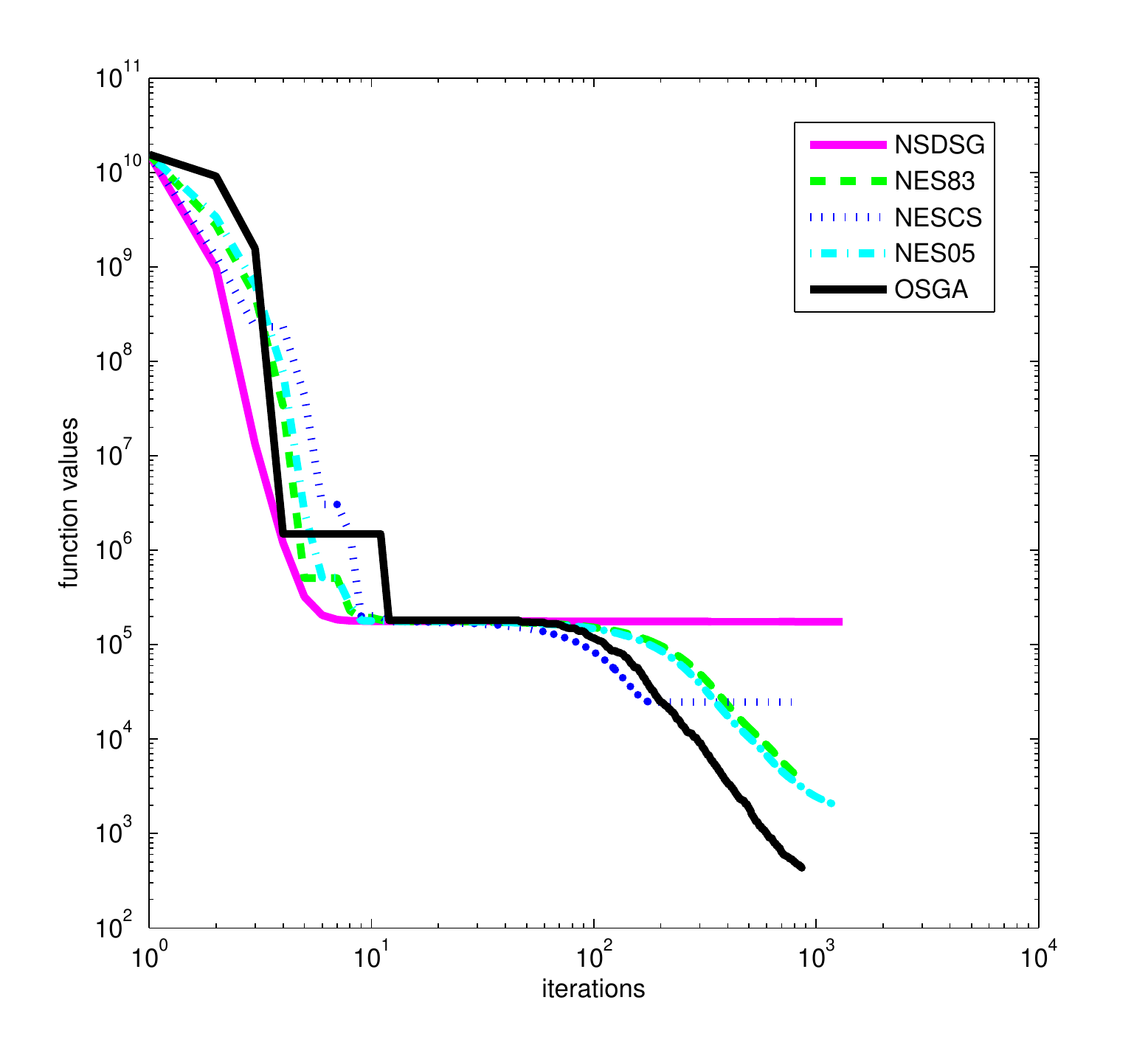}}%
\qquad
\subfloat[][Function values vs. iterations]{\includegraphics[width=7.7cm]{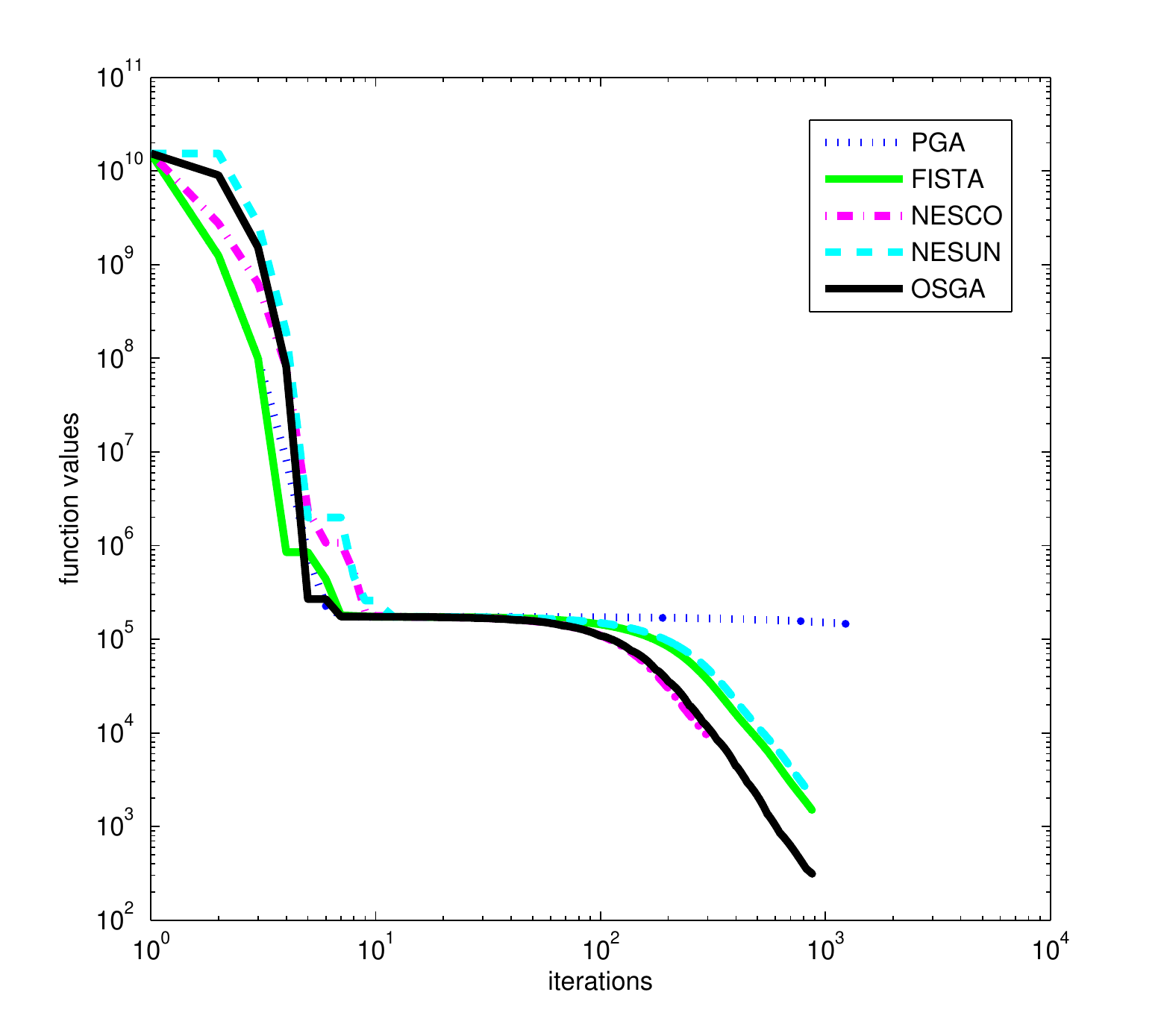}}
\qquad
\subfloat[][Function values vs. iterations ]{\includegraphics[width=7.7cm]{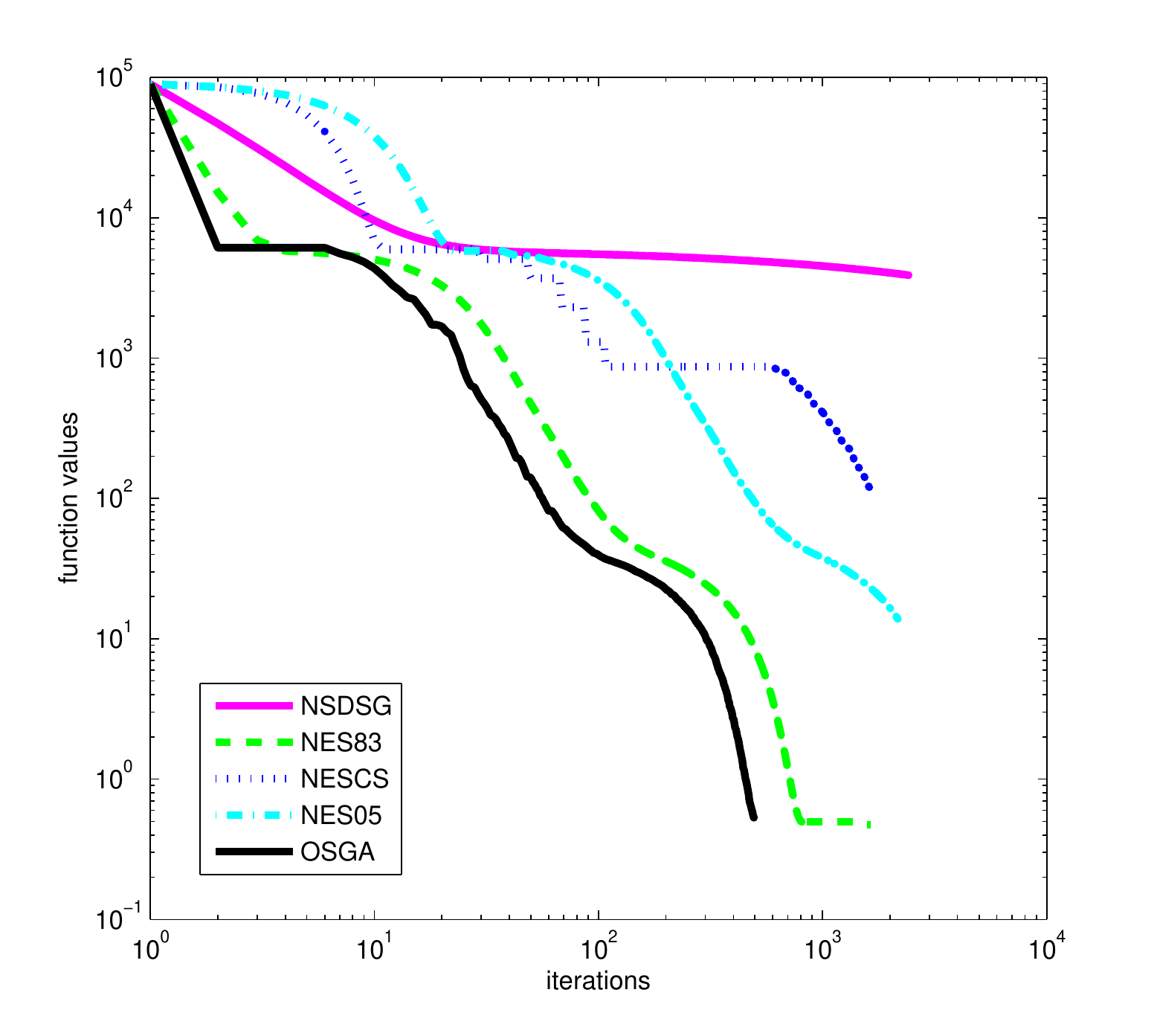}}%
\qquad
\subfloat[][Function values vs. iterations]{\includegraphics[width=7.7cm]{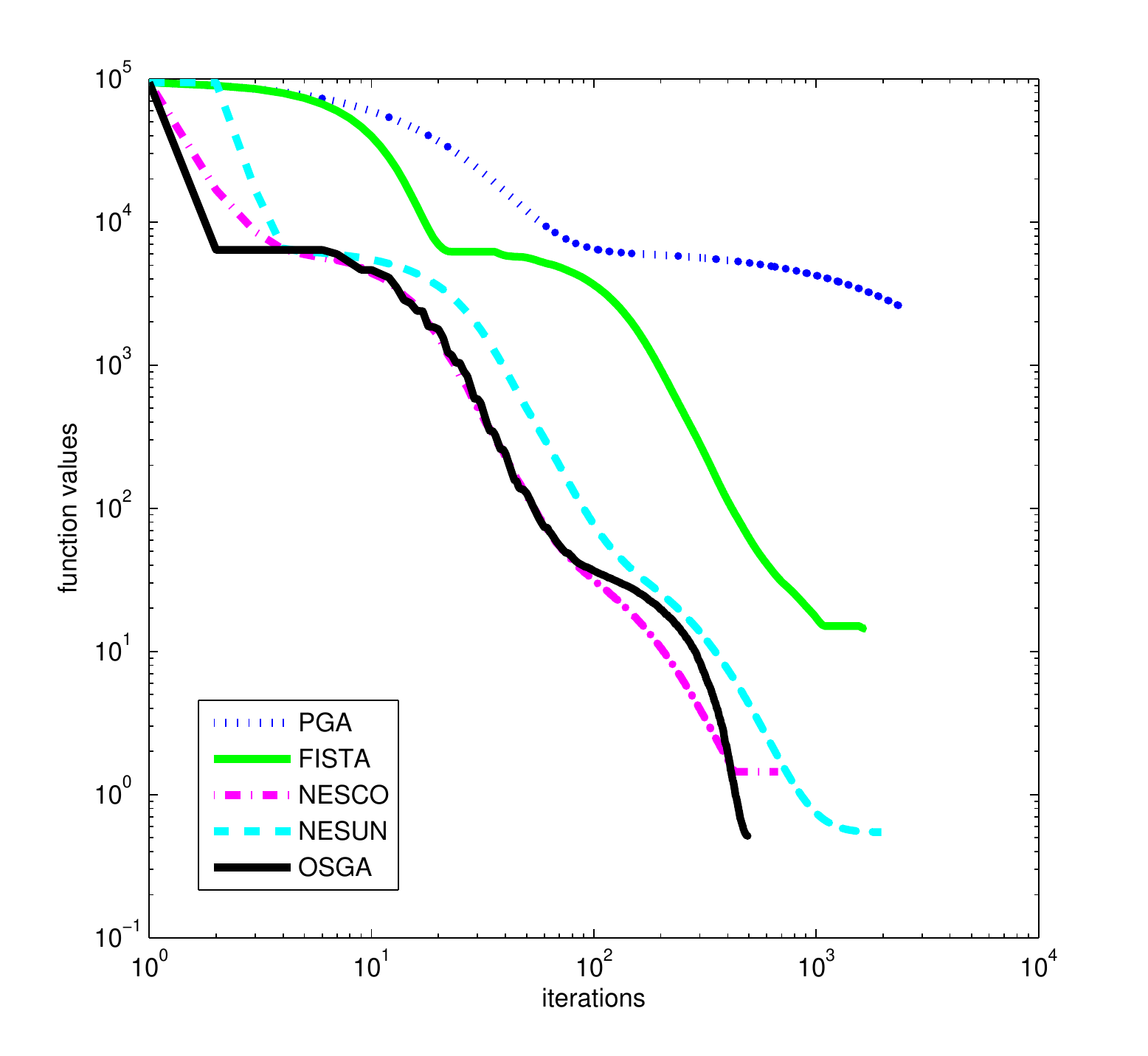}}
\caption{A comparison among the considered algorithms for solving the $l_2^2-l_2^2$ problem (\ref{e.tikh}): subfigures (a) and (b) stand for the dense data, where the algorithms stopped after 60 seconds; subfigures (c) and (d) show the results of the sparse data, where the algorithms stopped after 45 seconds. }
\label{fig:cont}%
\end{figure}

\begin{figure}[h]\label{f.opt2}
\centering
\subfloat[][Function values vs. iterations ]{\includegraphics[width=7.7cm]{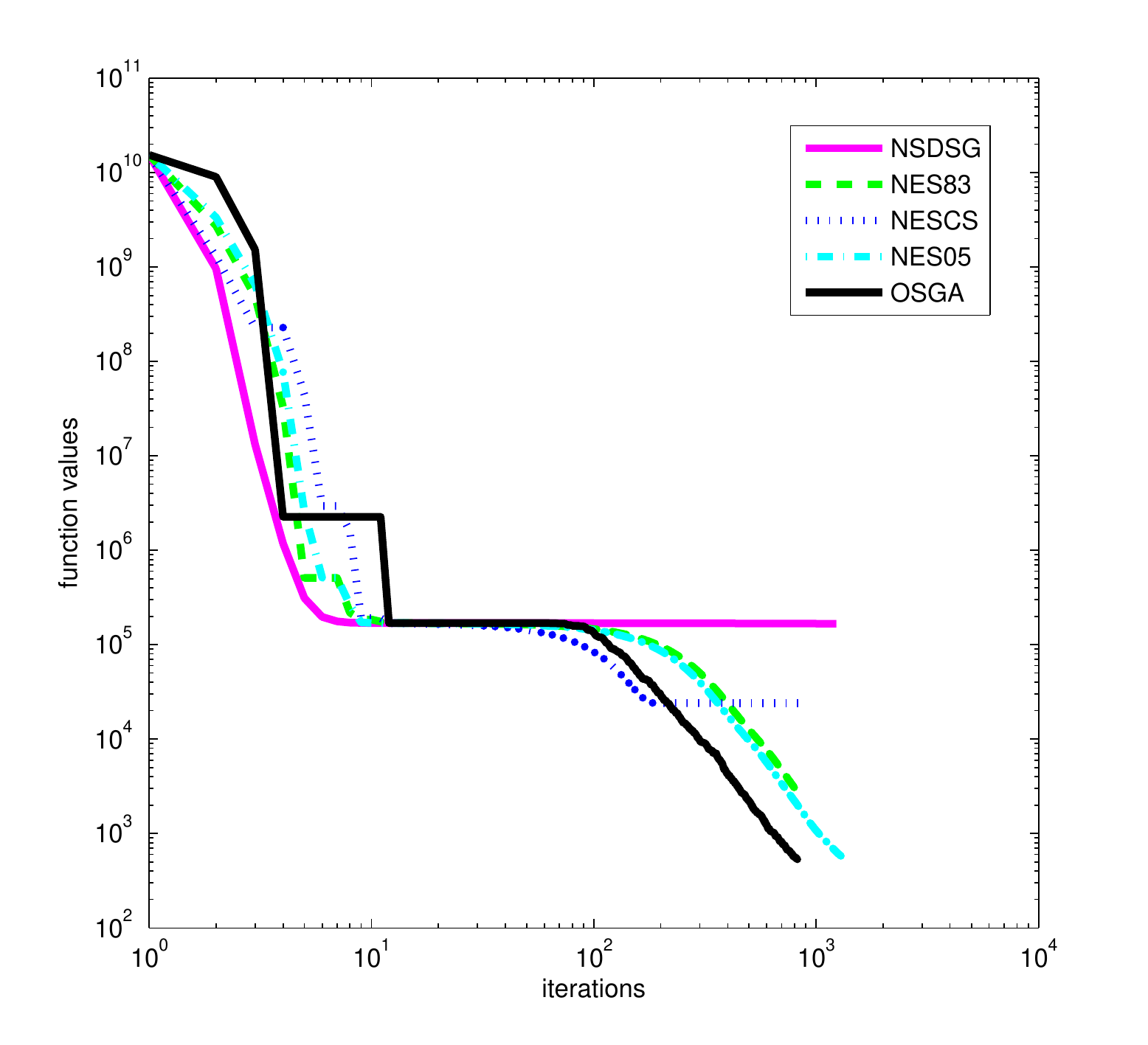}}%
\qquad
\subfloat[][Function values vs. iterations]{\includegraphics[width=7.7cm]{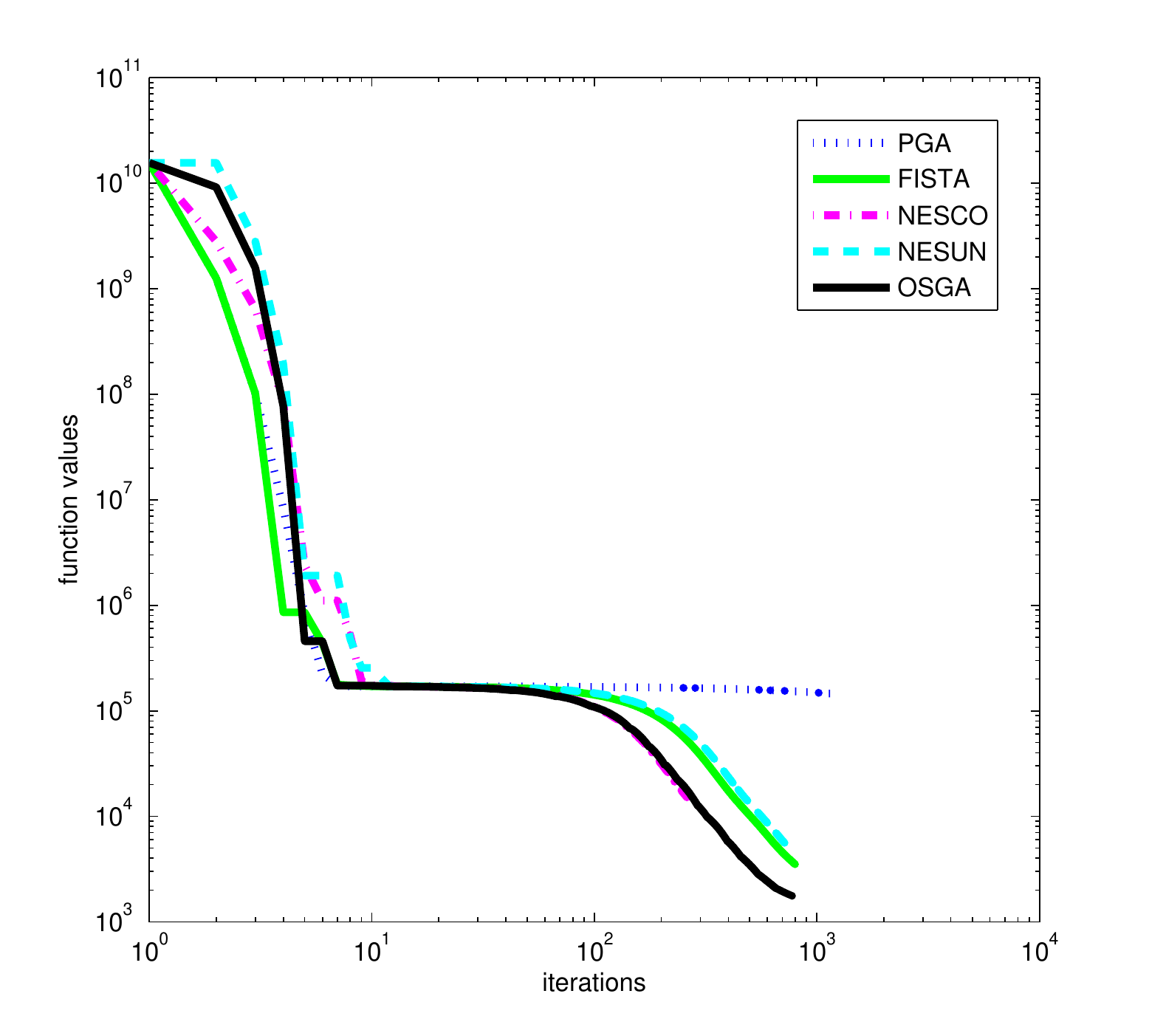}}
\qquad
\subfloat[][Function values vs. iterations ]{\includegraphics[width=7.7cm]{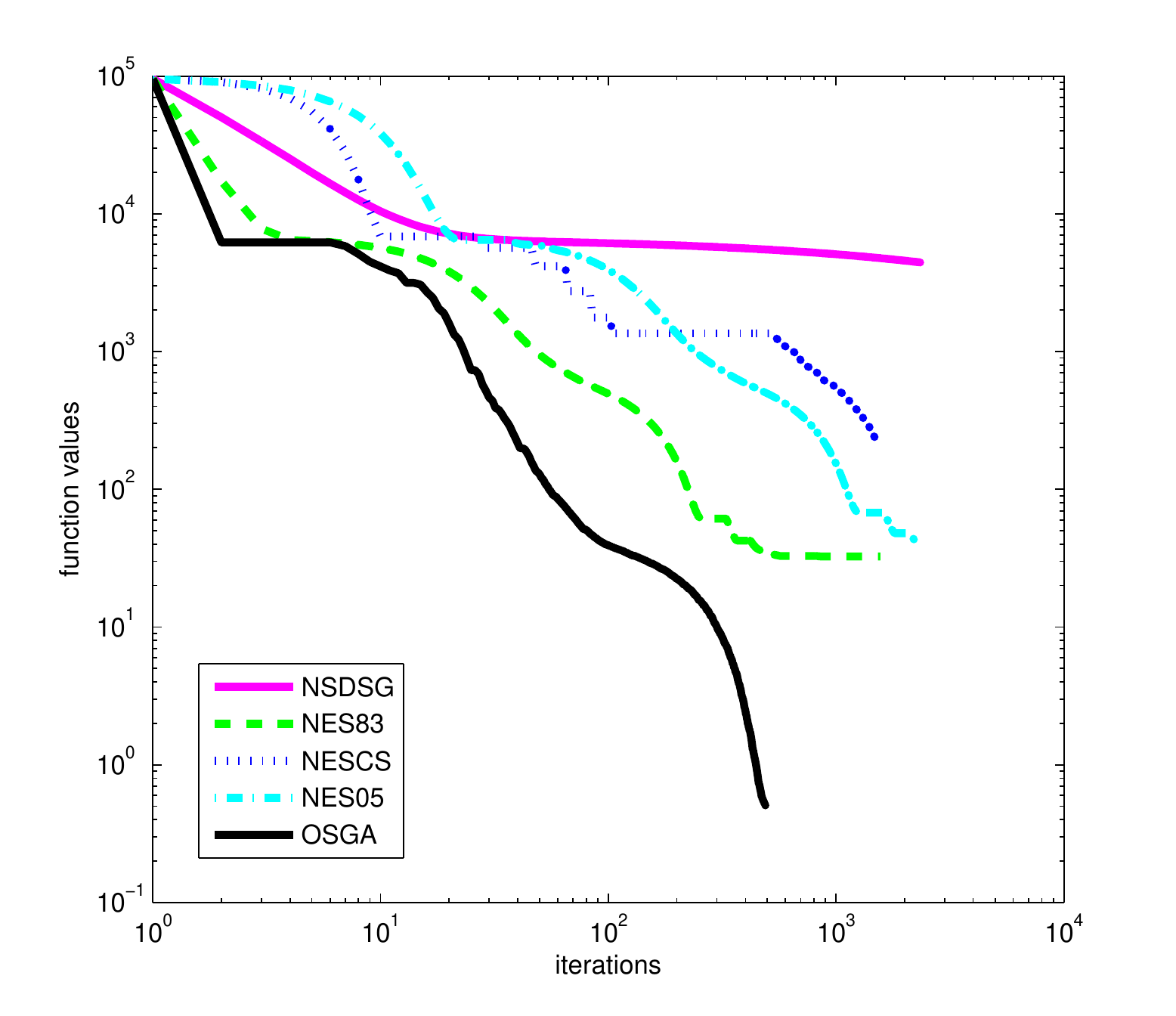}}%
\qquad
\subfloat[][Function values vs. iterations]{\includegraphics[width=7.7cm]{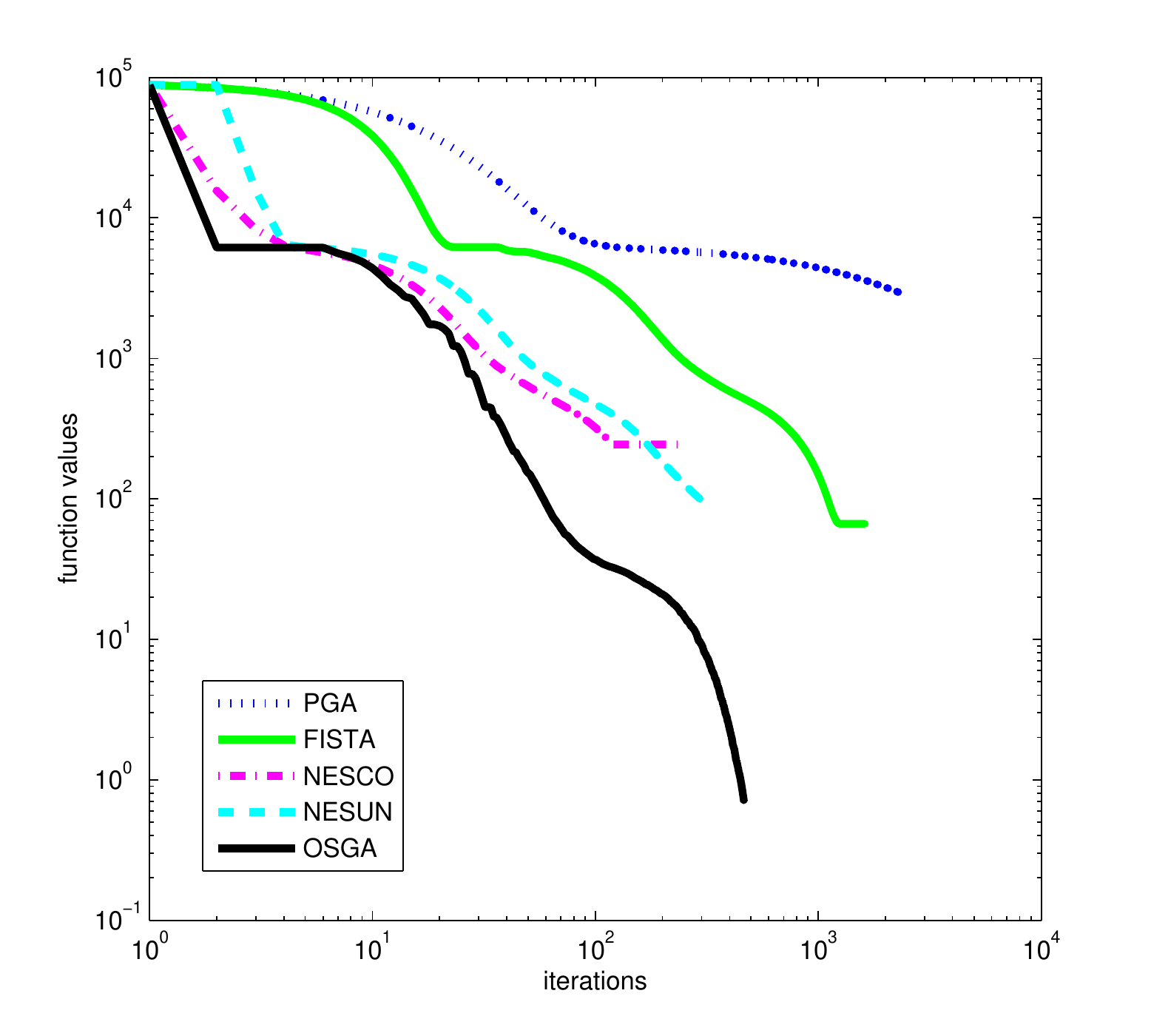}}
\caption{A comparison among the considered algorithms for solving the $l_2^2-l_1$ problem (\ref{e.BPD}): subfigures (a) and (b) stand for the dense data, where the algorithms stopped after 60 seconds; subfigures (c) and (d) show the results of the sparse data, where the algorithms stopped after 45 seconds.}
\end{figure}

In Figure 8, subfigures (a) and (b) illustrate the results of dense data, while subfigures (c) and (d) demonstrate the results of sparse data. From this figure, it can be seen that NSDSG and PGA are not competitive with the others confirming the theoretical results about their complexity. More specifically, Figure 8 (a) and (c) imply that NES83 and NES05 are competitive and produce better results than NESCS. In this case, OSGA achieves the best results, especially for sparse data. Figure 8 (b) and (d) suggest that NESCO, NESUN and FISTA are comparable, but still OSGA generates better results. Figure 9 depicts the results of reconstructing an approximation solution for the nonsmooth problem (\ref{e.BPD}) using the considered algorithms. The same interpretation for Figure 9 indicates that most of the optimal methods produce better results than classical ones and are competitive with each other. However, OSGA commonly obtains the best results among them. Summarizing the results of both figures, we see that the adapted algorithms NES83, NESCS and NES05 are comparable or even better than the the other optimal algorithms, especially for NES83. 

\subsection{Sparse optimization}\label{s.comp}
In recent decades, taking advantage of sparse solutions by using the structure of problems has become a common task in various applications of applied mathematics and particularly optimization, where applications are mostly arising in the fields of signal and image processing, statistics, machine learning and data mining. In the most cases, the problem involves high-dimensional data, and a small number of measurements is accessible, where the core of these problems involves an optimization problem. Thanks to the sparsity of solutions and the structure of  problems, these optimization problems can be solved in reasonable time even for the extremely high-dimensional data sets. Basis pursuit, lasso, wavelet-based deconvolution and compressed sensing are some examples, where the last one receives lots of attentions during the recent years. 

Among fields involving sparse optimization, {\bf \emph{compressed sensing}}, also called {\bf \emph{compressive sensing}} and {\bf \emph{compressive sampling}}, is a novel sensing/sampling framework for acquiring and recovering objects like a sparse image or signal in the most efficient way possible with employing an incoherent projecting basis. On the one hand, conventional processes in the image/signal acquisition from frequency data  follow the Nyquist-Shannon density sampling theorem declaring that the number of samples required for reconstructing an image matches the number of pixels in the image and for recovering a signal without error is devoted by its bandwidth. On the other hand, it is known that most of the data we acquire can be thrown away with almost no perceptual loss. Then, the question is: why all the data should be acquired while most of them will be thrown away? Also, can we only recover parts of the data that will be useful in the final reconstruction? In response to these questions, a novel sensing/sampling approach was introduced which goes against common wisdom in data acquisition, namely compressed sensing, for example see \cite{CanT,CanRT,Don} and references therein. 

It is known that compressed sensing supposes that the considered object, image or signal, has a sparse representation in some bases or dictionaries, and the considered representation dictionary is incoherent with the sensing basis, see \cite{Can,Don}. The idea behind this framework is centred on how many measurements are necessary to reconstruct an image/signal and the related nonlinear reconstruction techniques needed to recover this image/signal. It means that the object is recovered reasonably well from highly undersampled data, in spite of violating the traditional Nyquist-Shannon sampling constraints. In the other words, once an object is known to be sparse in a specific dictionary, one should find a set of measurements or sensing bases supposed to be incoherent with the dictionary. Afterwards, an underdetermined system of linear equations of the form (\ref{e.inv2}) is emerged, which is usually ill-conditioned. Some theoretical results in \cite{CanT,CanRT,Don} yield that the minimum number of measurements required to reconstruct the original object provides a specific pair of measurement matrices and optimization problems. Thus, the main challenge is how to reformulate this inverse problem as an appropriate minimization problem involving regularization terms and solve it by a suitable optimization technique.  

We now consider the inverse problem (\ref{e.inv2}) in the matrix form with $A \in \Rz^{m \times n}$, $x \in \Rz^n$ and $y \in \Rz^m$, where the observation is deficient, $m < n$, as a common interest in compressed sensing. Therefore, the object  $x$ can not be recovered from the observation $y$, even in noiseless system $A x = y$, unless some additional assumptions like sparsity of $x$ is presumed. We here consider a sparse signal $x \in \Rz^n$ that should be recovered by the observation $y \in \Rz^m$, where $A$ is obtained by first filling it with independent samples of the standard Gaussian distribution and then orthonormalizing the rows, see \cite{FigNW}. In our experiments, we consider $m = 5000$ and $n = 10000$, where the original signal $x$ consist of 300 randomly placed $\pm 1$ spikes, and $y$ is generated using (\ref{e.inv2}) by $\sigma^2 = 10^{-6}$. We reconstruct $x$ using (\ref{e.BPD}), where the regularization parameter is set to $\lambda = 0.1 \|\mathcal{A}y\|_\infty$ or $\lambda = 0.001 \|\mathcal{A}y\|_\infty$. The algorithms stopped after 20 seconds of the running time, and the results are illustrated in Figure 10.

\begin{figure}[h]\label{f.spa1}
\centering
\subfloat[][Function values vs. iterations ]{\includegraphics[width=7.6cm]{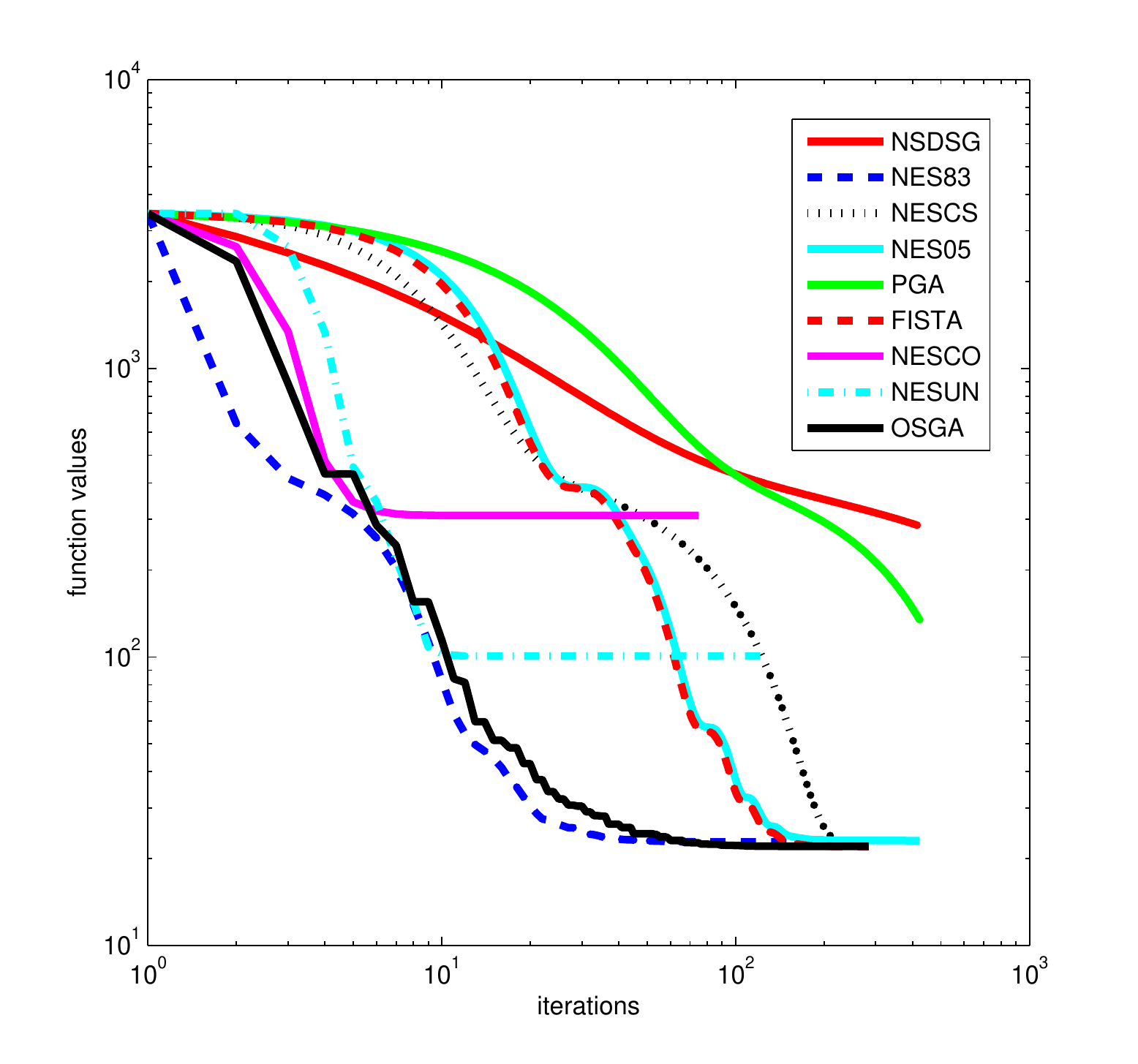}}
\qquad
\subfloat[][MSE vs. iterations ]{\includegraphics[width=7.6cm]{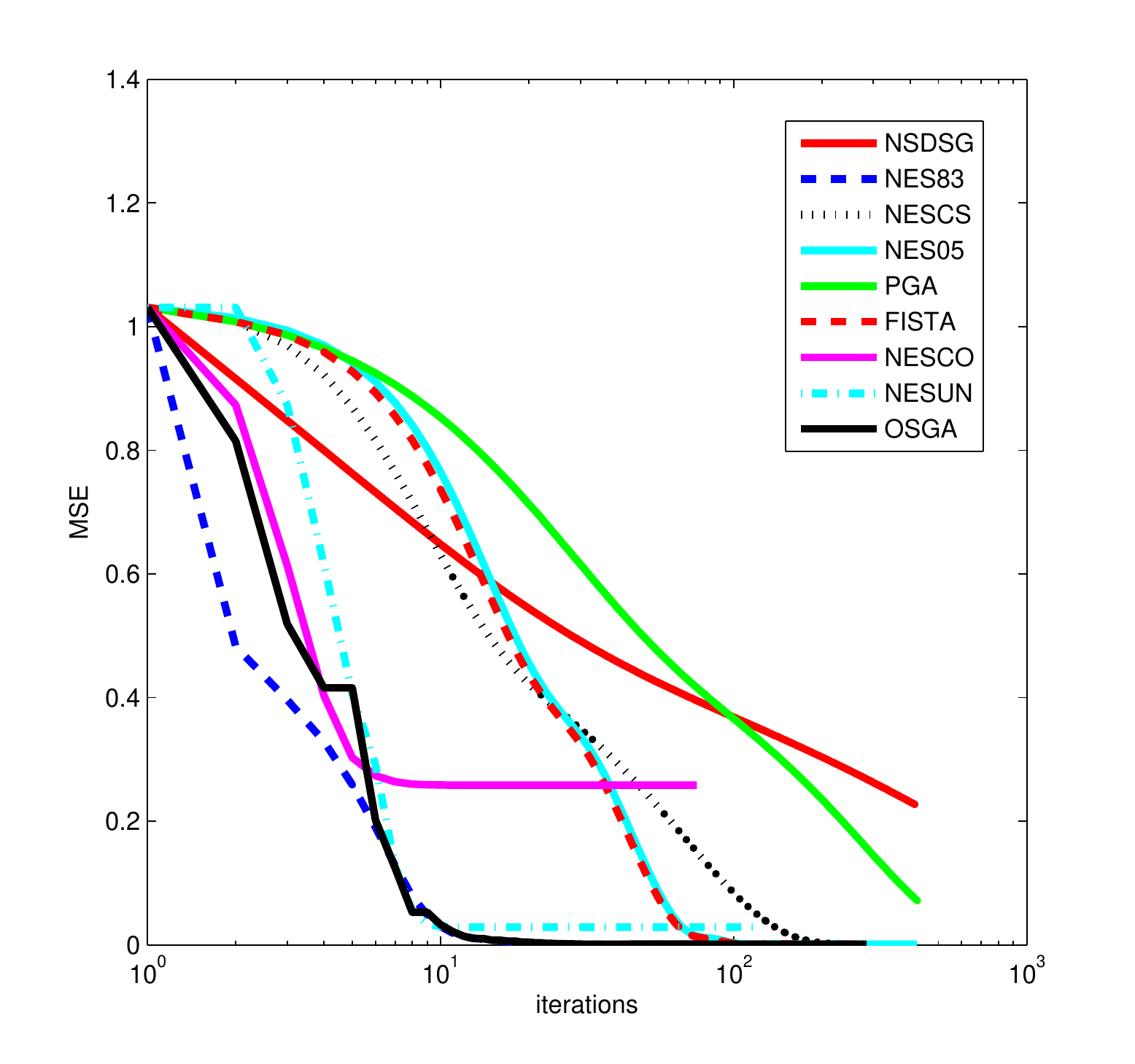}}
\qquad
\subfloat[][Function values vs. iterations ]{\includegraphics[width=7.6cm]{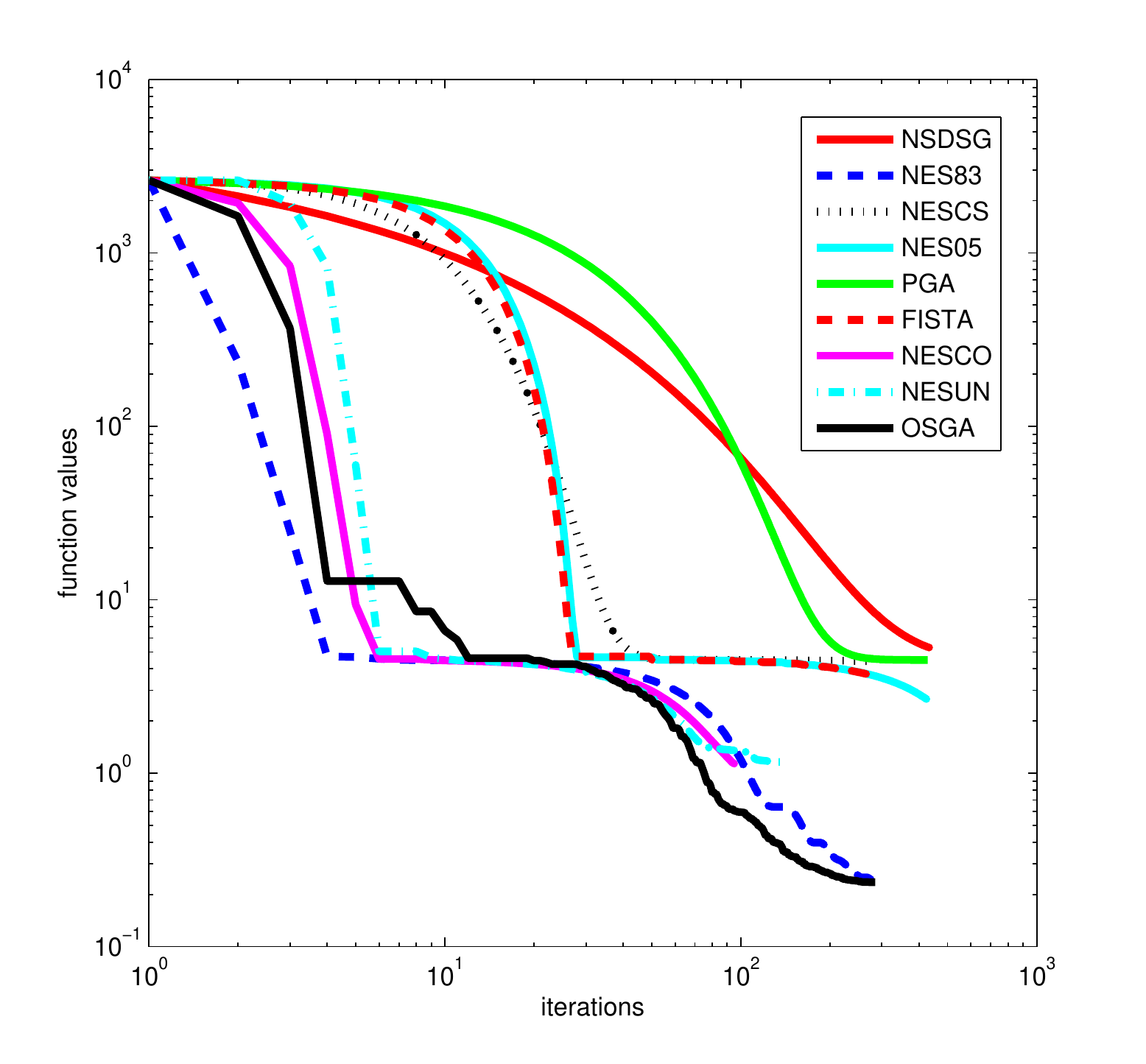}}
\qquad
\subfloat[][MSE vs. iterations ]{\includegraphics[width=7.6cm]{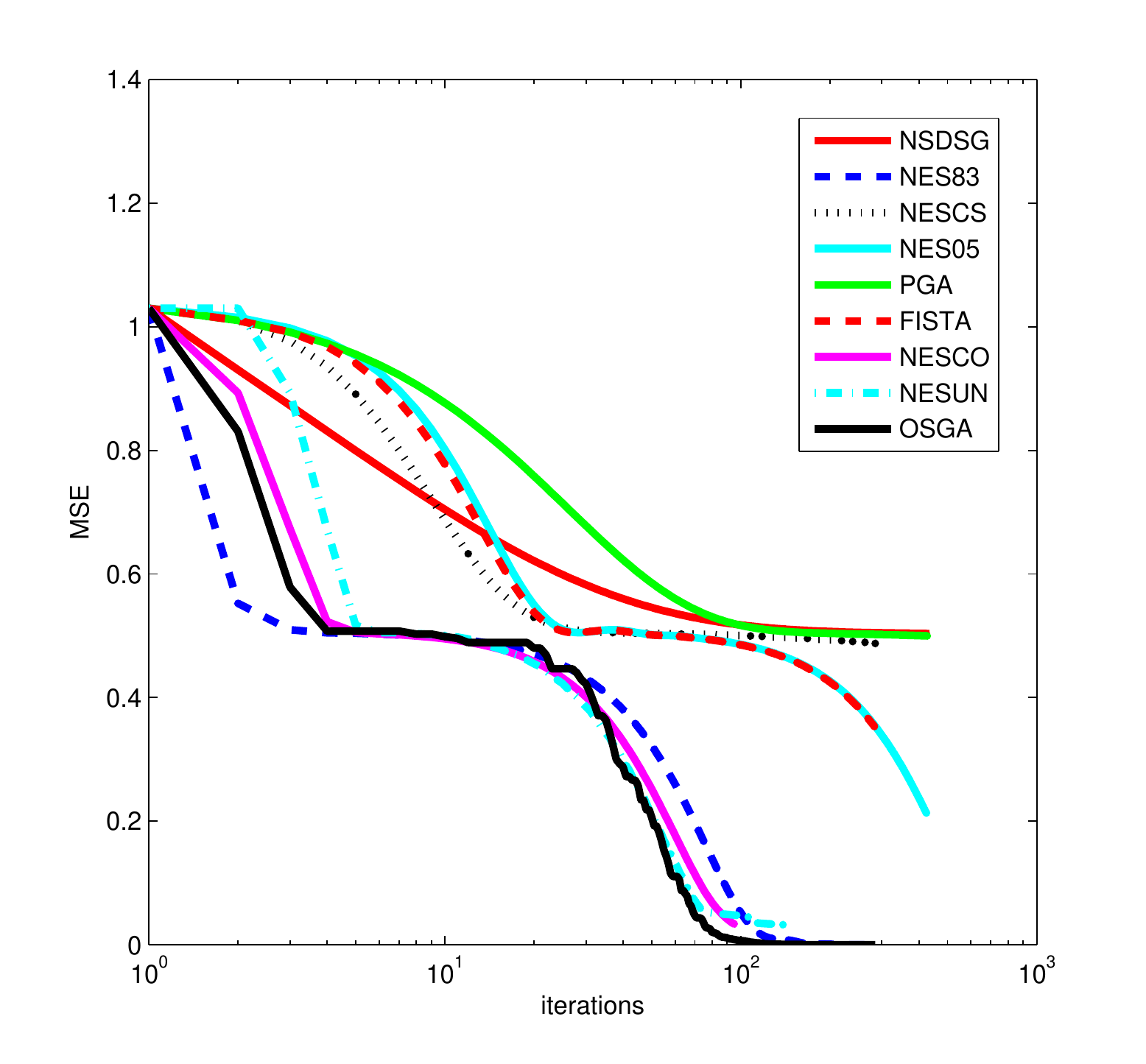}}
\caption{Recovering a noisy sparse signal using $l_2^2-l_1$ problem (\ref{e.BPD}), where the algorithms stopped after 20 seconds. Subfigures (a) and (b) show the results for the regularization parameter $\lambda = 0.1 \|\mathcal{A}y\|_\infty$, while subfigures (c) and (d) indicate the results for the regularization parameter $\lambda = 0.001 \|\mathcal{A}y\|_\infty$.}
\end{figure}

\begin{figure}[h] \label{f.spa2}
    \begin{center}
    \includegraphics[width=16cm]{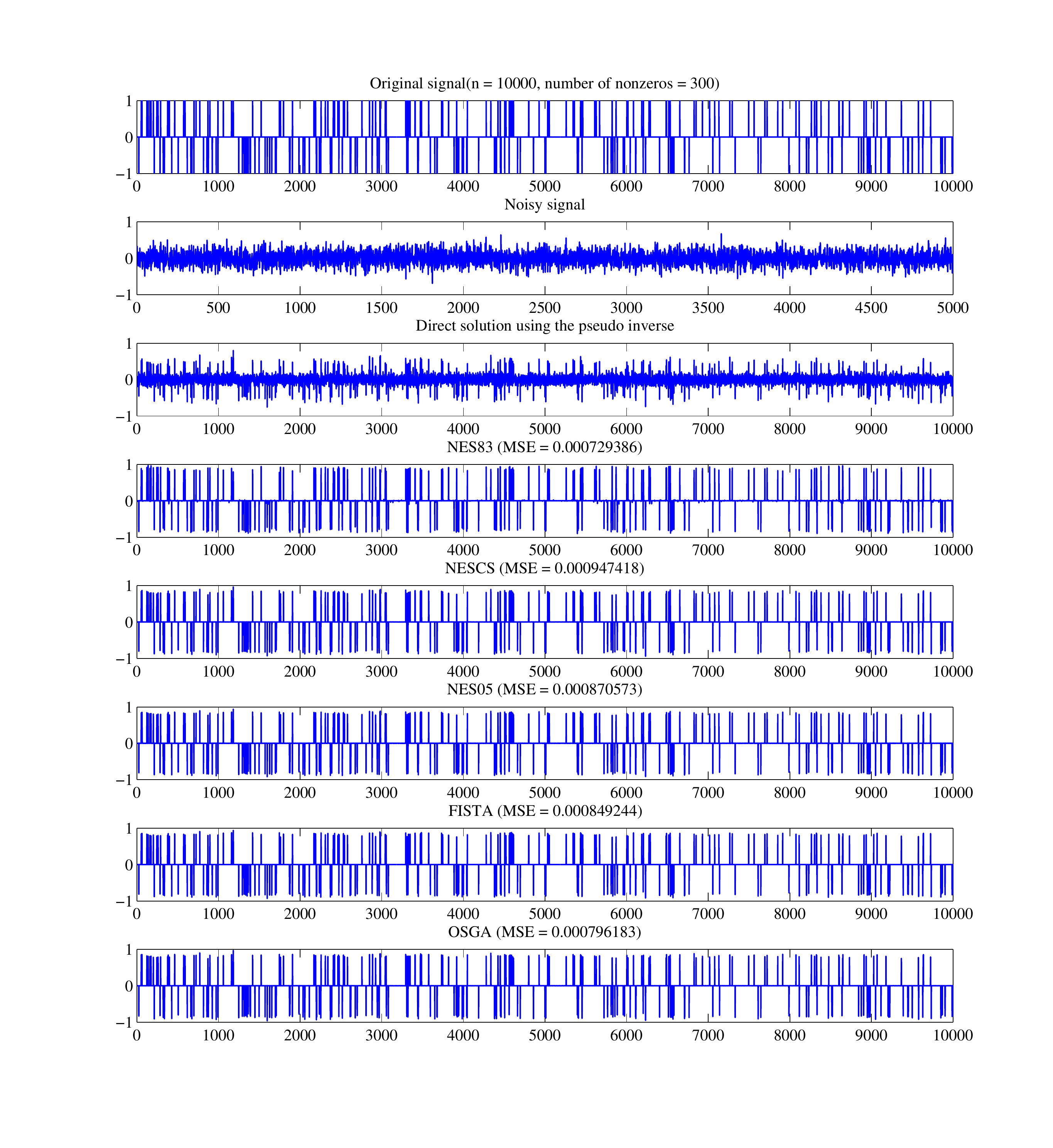}
    \vspace{-1.5cm}
    \caption{Recovering a noisy sparse signal using $l_2^2-l_1$ problem (\ref{e.BPD}) by NES83, NESCS, NES05, FISTA and OSGA. The algorithms stopped  
             after 20 seconds, and the quality measure MSE is defined by (\ref{e.mse}). }
    \end{center}
\end{figure}

Figure 10 illustrates the results of the sparse signal recovery by NSDSG, NSE83, NESCS, NES05, PGA, FISTA, NESCO, NESUN and OSGA, where the the results are compared in the sense of function values and MSE. Indeed, MSE is a usual measure of comparisons in signal reconstruction defined by
\begin{equation}\label{e.mse}
\mathrm{MSE}(x) = \frac{1}{n} \|x - x_0\|^2,
\end{equation}
where $x_0$ is the original signal. The results of Figure 10 indicate that NSDSG, PGA, NESCO and NESUN are unsuccessful to recover the signal accurately, where NESCO and NESUN could not leave the backtracking line searches in a reasonable number of inner iterations which dramatically decrease the step sizes. In Figure 10 (a) and (b), we see that NSE83, NESCS, NES05, FISTA and OSGA can reconstruct the signal accurately, however NES83 and OSGA are much faster than the others regarding both function values and MSE. To see the sensitivity of the algorithms to the regularization parameter, Figure 10 (c) and (d) demonstrate the results of recovery for the smaller regularization parameter $\lambda = 0.001 \|\mathcal{A}y\|_\infty$. It is clear that NESCS, NES05 and FISTA are more sensitive than NES83 and OSGA to the regularization parameter, where they could not get an accurate approximation of the original signal when the regularization parameter decreased. 

To show the results of our test visually, we illustrate the recovered signal of the experiment reported in Figure 10 (a) and (b) in Figure 11. Since NSDSG, PGA, NESCO and NESUN could not recover the signal accurately, we do not consider them in Figure 11. We also add the direct solution $x = A^T(AA^T)^{-1}y$ to our comparison indicating that this solution is very inaccurate. The results suggest that NSE83, NESCS, NES05, FISTA and OSGA recover the signal visually acceptable. Summarizing the results of Figures 10 and 11, it can be seen that NES83 and OSGA just need about 15 iterations to achieve acceptable results, while NESCS, NES05 and FISTA require about 100 iterations to reach the same accuracy.

\section{OSGA software package}
The first version of the OSGA software package for solving unconstrained convex optimization problems is publicly available at 
\begin{center}
\url{http://homepage.univie.ac.at/masoud.ahookhosh/uploads/OSGA_v1.1.tar.gz}
\end{center}
where it is implemented in MATLAB programming language. Some examples are available to show how the user can implement OSGA. The interface to each subprogram in the package is fully documented in the corresponding file. Moreover, the OSGA user's manual \cite{AhoUM} describes the design of the package and how the user can solve his/her own problems.

\section{Concluding remarks}
Simple gradient and subgradient procedures are historically first numerical schemes proposed for solving smooth and nonsmooth convex optimization, respectively. Theoretical and computational results indicate that they are very slow to be able to solve large-scale practical problems in applications. However, these algorithms have a simple structure and need a low amount of memory in implementation. These features cause that first-order algorithms have received much attention during last few decades. In particular, both theoretical and computational aspects of optimal first-order methods are very interesting. 

In this paper, we study and develop an optimal subgradient algorithm called OSGA for the class of multi-term affine composite functions. The algorithm is flexible enough to handle a broad range of structured convex optimization problems and has a simple structure. It also does not need global parameters like Lipschitz constants except for the strong convexity parameter, which can be set to zero if it is unknown. We consider examples of linear inverse problems and report extensive numerical results by comparing OSGA with some state-of-the-art algorithms and software packages. The numerical results suggest that OSGA is computationally competitive or even better than the considered state-of-the-art solvers designed specifically for solving these special problems. It can often reach the fast rate of convergence even beyond the theoretical rate suggesting that OSGA is a promising algorithm for solving a wide range of large-scale convex optimization problems.

We also develop some optimal first-order methods originally proposed for solving smooth convex optimization to deal with nonsmooth problems as well. To this end, we simply pass a subgradient of the nonsmooth composite objective function to these algorithms in place of the gradient. This simply means that we change their original oracle by the nonsmooth black-box oracle. The numerical results show that the adapted algorithms are promising and can produce competitive or even better results than the other considered algorithms specifically proposed for nonsmooth problems. However, they are not supported by any theoretical results at the moment, so it might be an interesting topic for a future research. 

The current paper only considers unconstrained convex optimization problems, however, the constrained version for some simple constraints developed in \cite{AhoN1,AhoN2,AhoN3}. Furthermore, for problems involving costly direct and adjoint operators, a version of OSGA using the multi-dimensional subspace search is established in \cite{AhoN3,AhoN4}. Finally, many more applications like sparse optimization with the $l_1/l_2$ or $l_1/l_\infty$ norm could be considered, but this is beyond the scope of the current work and remains for a future research.\\

{\bf Acknowledgement.} We would like to thank {\sc Arnold Neumaier} for a careful reading of the manuscript and his insightful comments and feedback. We are also grateful to the authors of TwIST, SpaRSA and FISTA software packages for generously making their codes available on the web.




\begin{thebibliography}{99}

\bibitem{AhoUM} Ahookhosh, M.: 
User's manual for OSGA (Optimal SubGradient Algorithm), (2014)
\url{http://homepage.univie.ac.at/masoud.ahookhosh/uploads/User's_manual_for_OSGA.pdf}

\bibitem{AhoN1} Ahookhosh, M., Neumaier, A.: 
An optimal subgradient method for large-scale bound-constrained convex optimization,
Manuscript, University of Vienna, (2014)

\bibitem{AhoN2} Ahookhosh, M., Neumaier, A.: 
Optimal subgradient methods for structured convex constrained optimization I: theoretical results,
Manuscript, University of Vienna, (2014)

\bibitem{AhoN3} Ahookhosh, M., Neumaier, A.:
Optimal subgradient methods for structured convex constrained optimization II: numerical results,
Manuscript, University of Vienna, (2014)

\bibitem{AhoN3} Ahookhosh, M., Neumaier, A.: 
High-dimensional convex optimization via optimal affine subgradient algorithms,
in ROKS workshop, 83-84 (2013)

\bibitem{AhoN4} Ahookhosh, M., Neumaier, A.:
Fast subgradient algorithms for costly convex optimization problems, Manuscript, University of Vienna, (2014)

\bibitem{AndH} Andrews, H., Hunt, B.:
Digital Image Restoration. Englewood
Cliffs, NJ: Prentice-Hall, (1977)


\bibitem{BecT1} Beck, A., Teboulle, M.: 
Mirror descent and nonlinear projected subgradient methods for convex optimization,
Operations Research Letters 31, 167--175 (2003)

\bibitem{BecT2} Beck, A., Teboulle, M.: 
A fast iterative shrinkage-thresholding algorithm for linear inverse problems,
SIAM Journal on Imaging Sciences 2, 183--202 (2009)

\bibitem{BecT3} Beck, A., Teboulle, M.: 
Fast gradient-based algorithms for constrained total variation image denoising and deblurring,
IEEE Transactions on Image Processing 18(11), 2419--2434 (2009)

\bibitem{BecCG} Becker, S.R., Cand{\`e}s, E.J., Grant, M.C.:
Templates for convex cone problems with applications to sparse signal recovery,
Mathematical Programming Computation 3, 165--218 (2011)

\bibitem{BenN} Ben-Tal, A., Nemirovski, A.:
Non-Euclidean restricted memory level method for large-scale convex optimization,
Mathematical Programming 102, 407--456 (2005)

\bibitem{BerSCB} Bertalmio, M., Sapiro, G., Caselles, V., Ballester, C.:
Image inpainting, in Proceedings of SIGGRAPH, 417--424 (2000)

\bibitem{BerB} Bertero, M., Boccacci, P.: 
Introduction to Inverse Problems in Imaging, Bristol, U.K.: IOP, (1998)

\bibitem{BioF} Bioucas-Dias, J., Figueiredo, M.:
A new TwIST: two-step iterative shrinkage/thresholding  algorithms for image restoration, 
IEEE Transactions on Image Processing 16(12), 2992--3004 (2007)

\bibitem{BotH1} Boţ, R.I., Hendrich, C.:
A double smoothing technique for solving unconstrained nondifferentiable convex optimization problems, 
Computational Optimization and Applications 54(2), 239--262 (2013)

\bibitem{BotH2} Boţ, R.I., Hendrich, C.: 
On the acceleration of the double smoothing technique for unconstrained convex optimization problems,
Optimization, 1--24, iFirst (2012)

\bibitem{BruDE} Bruckstein, A.M., Donoho, D.L., Elad, M.:
From sparse solutions of systems of equations to sparse modeling of signals and images,
SIAM Review 51(1), 34--81 (2009)

\bibitem{BoyXM} Boyd, S., Xiao, L., Mutapcic, A.:
Subgradient methods, (2003). 
\url{http://www.stanford.edu/class/ee392o/subgrad_method.pdf}

\bibitem {Can} Cand\'{e}s, E.:
Compressive sampling, 
in Proceedings of International Congress of Mathematics, Vol. 3, Madrid, Spain, 1433--1452 (2006)

\bibitem {CanT} Cand\'{e}s, E., Tao, T.:
Near-optimal signal recovery from random projections: universal encoding etrategies?, 
IEEE Transactions of Information Theory 52(12), 5406--5425 (2006)

\bibitem {CanRT} Cand\'{e}s, E., Romberg, J., Tao, T.:
Robust uncertainty principles: exact signal reconstruction from highly incomplete frequency information, 
IEEE Transactions of Information Theory 52(2), 489--509 (2006)

\bibitem{Cha} Chambolle, A.:
An algorithm for total variation minimization and applications. 
Journal of Mathematical Imaging and Vision 20(1-2), 89–97 (2004) 

\bibitem{ChaSZ}  Chan, T.F., Shen, J., Zhou, H.M.:
Total variation wavelet inpainting, 
Journal of Mathematical Imaging and Vision 25, 107--125 (2006)

\bibitem{CheZ} Chen, X., Zhou, W.:
Smoothing nonlinear conjugate gradient method for image restoration using nonsmooth nonconvex minimization,
SIAM Journal on Imaging Sciences 3(4), 765--790 (2010) 

\bibitem{DevGN} Devolder, O., Glineur, F., Nesterov, Y.:
First-order methods of smooth convex optimization with inexact oracle,
Mathematical Programming, (2013) DOI 10.1007/s10107-013-0677-5.

\bibitem {DolM} Dolan, E., Mor\'{e}, J.J.:
Benchmarking optimization software with performance profiles, 
Mathematical Programming 91, 201--213 (2002)

\bibitem {Don} Donoho, D.L.:
Compressed sensing, 
IEEE Transactions of Information Theory 52(4), 1289--1306 (2006)

\bibitem{ElaMR} Elad, M., Milanfar, P., Rubinstein, R.:
Analysis versus synthesis in signal priors, 
Inverse Problems 23, 947--968 (2007)

\bibitem{FigNW} Figueiredo, M.A.T., Nowak, R.D., Wright, S.J.: 
Gradient projection for sparse reconstruction: Application to compressed sensing and other inverse problems,
IEEE Journal of Selected Topics in Signal Processing 1(4), 586--597 (2007)

\bibitem{GonK} Gonzaga, C.C., Karas, E.W.:
Fine tuning Nesterov’s steepest descent algorithm for differentiable convex programming,
Mathematical Programming 138, 141--166 (2013)

\bibitem{GonKR} Gonzaga, C.C., Karas, E.W., Rossetto, D.R.:
An optimal algorithm for constrained differentiable convex optimization,
Optimization-online preprint 3053, (2011) \url{http://www.optimization-online.org/DB_FILE/2011/06/3053.pdf}

\bibitem{KauN1} Kaufman, L., Neumaier, A.: 
PET regularization by envelope guided conjugate gradients, 
IEEE Transactions on Medical Imaging 15, 385--389 (1996) 

\bibitem{KauN2} Kaufman, L., Neumaier, A.:  
Regularization of ill-posed problems by envelope guided conjugate gradients,
Journal of Computational and Graphical Statistics 6(4), 451--463 (1997)

\bibitem{LanB} Lan, G.:
Bundle-level type methods uniformly optimal for smooth and nonsmooth convex optimization,
Mathematical Programming, (2013) DOI 10.1007/s10107-013-0737-x.



\bibitem{LewO} Lewis, A.S., Overton, M.L.: 
Nonsmooth optimization via quasi-Newton methods, 
Mathematical Programming 141, 135-163 (2013)

\bibitem{MenC} Meng, X., Chen, H.:
Accelerating Nesterov's method for strongly convex functions with Lipschitz gradient,
(2011) \url{http://arxiv.org/abs/1109.6058}

\bibitem{NemNes} Nemirovsky, A.S., Nesterov, Y.:
Optimal methods for smooth convex minimization,
Zh. Vichisl. Mat. Fiz. (In Russian) 25(3), 356--369 (1985)

\bibitem{NemY} Nemirovsky, A.S., Yudin, D.B.:
Problem complexity and method efficiency in optimization,
Wiley, New York (1983)

\bibitem{Nes.83} Nesterov, Y.:
A method of solving a convex programming problem with convergence rate $O(1/k^2)$,
Doklady AN SSSR (In Russian), 269 (1983), 543-547. English translation: Soviet Math. Dokl. 27, 372--376 (1983)

\bibitem{Nes.88} Nesterov, Y.:
On an approach to the construction of optimal methods of minimization of smooth convex function,
{\`E}konom. i. Mat. Metody (In Russian) 24, 509--517 (1988)

\bibitem{Nes.Bo} Nesterov, Y.:
Introductory lectures on convex optimization: A basic course,
Kluwer, Dordrecht (2004)

\bibitem{Nes.Sm} Nesterov, Y.:
Smooth minimization of non-smooth functions,
Mathematical Programming 103, 127--152 (2005)

\bibitem{Nes.Se} Nesterov, Y.:
Smoothing technique and its applications in semidefinite optimization,
Mathematical Programming 110, 245--259 (2007)

\bibitem{Nes.E} Nesterov, Y.:
Excessive gap technique in nonsmooth convex minimization,
SIAM Journal on Optimization 16(1), 235–249 (2005)

\bibitem{Nes.Co} Nesterov, Y.: 
Gradient methods for minimizing composite objective function,
Mathematical Programming, 140 (2013), 125--161.

\bibitem{Nes.PD} Nesterov, Y.:
Primal-dual subgradient methods for convex problems,
Mathematical Programming 120, 221--259 (2009)

\bibitem{Nes.Un} Nesterov, Y.:
Universal gradient methods for convex optimization problems, 
Technical Report, Center for Operations Research and Econometrics (CORE), Catholic University of Louvain (UCL), (2013)

\bibitem{NeuO} Neumaier, A.:
OSGA: a fast subgradient algorithm with optimal complexity, 
\emph{Manuscript}, University of Vienna, (2014) \url{http://arxiv.org/abs/1402.1125}

\bibitem{NeuI} Neumaier, A.:
Solving ill-conditioned and singular linear systems: A tutorial on regularization, 
SIAM Review 40(3), 636--666 (1998)

\bibitem{OshRF} Osher, S., Rudin, L., Fatemi, E.:
Nonlinear total variation basednoise removal algorithms, 
Physica 60, 259--268 (1992)

\bibitem{ParB} Parikh, N., Boyd, S.: 
Proximal Algorithms, 
Foundations and Trends in Optimization 1(3), 123--231 (2013)

\bibitem{Tik} Tikhonov, A.N.:
Solution of incorrectly formulated problems and the regularization method,
Soviet Mathematics Doklady 4, 1035--1038 (1963)

\bibitem{Tse} Tseng, P.:
On accelerated proximal gradient methods for convex-concave optimization,
Manuscript (2008) \url{http://pages.cs.wisc.edu/~brecht/cs726docs/Tseng.APG.pdf}

\bibitem{WriNF} Wright, S.J., Nowak, R.D., Figueiredo, M.A.T.:   
Sparse reconstruction by separable approximation,
IEEE Transactions on Signal Processing 57(7), 2479--2493 (2009)

\end{thebibliography}
\end{document}